\documentclass[a4paper]{article}

\RequirePackage{amsthm,amsmath,amsfonts,amssymb}
\usepackage{bm}
\usepackage{bbm}
\usepackage{mathtools}
\usepackage{enumitem}
\usepackage{xcolor}
\usepackage{graphicx}
\usepackage{float}
\usepackage{natbib}
\usepackage{subcaption}
\usepackage{tcolorbox}
\usepackage{hyperref}
\usepackage{geometry}
\usepackage{setspace}

\DeclarePairedDelimiter\abs{\lvert}{\rvert}
\DeclarePairedDelimiter\norm{\lVert}{\rVert}
\DeclarePairedDelimiter\normw{\lVert}{\rVert_{\textup{b}}}
\DeclarePairedDelimiter\normww{\lVert}{\rVert_{\textup{b}}}
\DeclarePairedDelimiter\normwww{\lVert}{\rVert_{\sigma}}
\usepackage{bibunits}
\defaultbibliographystyle{apalike}

\newcommand{\diff}{\textup{d}}
\theoremstyle{plain}

\newtheorem{theorem}{Theorem}
\newtheorem{example}{Example}
\newtheorem{lemma}[theorem]{Lemma}
\newtheorem{corollary}[theorem]{Corollary}
\newtheorem{proposition}[theorem]{Proposition}

\newtcolorbox{commentbox}{
    colframe=black,      
    colback=white,       
    width=\linewidth,    
    boxrule=1pt,         
    arc=0pt,             
    outer arc=0pt,       
    left=8pt,            
    right=8pt,           
    top=8pt,             
    bottom=8pt,          
}

\theoremstyle{definition}

\newtheorem{remark}{Remark}
\newtheorem{assumption}{Assumption}

\newcommand{\expv}{\mathbb{E}}
\newcommand{\prob}{\mathbb{P}}
\newcommand{\cov}{\mathbb{C}\textup{ov}}
\newcommand{\var}{\mathbb{V}\textup{ar}}
\newcommand{\ind}{\mathbbm{1}}

\DeclareMathOperator*{\argmax}{arg\,max}
\DeclareMathOperator*{\argmin}{arg\,min}

\title{Inference for sparsely sampled Gauss--Markov processes under  outcome-dependent dropout}
\author{Alexander Aue, Siegfried H\"o\-rmann, Maximilian Ofner}

\begin{document}


\maketitle

\begin{abstract}
    We consider sparsely observed functional data generated by a latent Gauss--Markov process modeled through a linear stochastic differential equation (SDE). Motivated by an application to tumor growth data, we allow for outcome-dependent dropout, where sampling terminates once the most recent observation exceeds a prescribed threshold. Such observation schemes arise naturally in longitudinal studies and violate the fundamental missing completely at random assumption commonly imposed in the analysis of partially observed functional data. We develop a likelihood-based estimation framework that,
    in contrast to existing moment-based methods, avoids the bias induced by outcome-dependent dropout. We establish convergence rates for the proposed estimators and show that they are minimax optimal up to logarithmic factors, with the diffusion coefficient admitting a faster rate than the drift coefficients under a random initial condition for the SDE. An application to tumor growth data further demonstrates the advantages of a fully probabilistic functional data model for tasks beyond second-order inference, including prediction bands, first-passage times, and tumor-age estimation.
\end{abstract}

\begin{bibunit}

\section{Introduction}
\label{sec:intro}

Functional data analysis (FDA) has emerged as a central area of modern statistics. In FDA, each observational unit is viewed as a realization of a continuous-time stochastic process, $(X_i(t)\colon t \in [0,T])$ for $i = 1,\ldots,N$. 
Functional data arise in a broad range of scientific applications, including medicine, engineering, economics, and the environmental sciences. Comprehensive treatments of FDA can be found in~\cite{ramsay1997functional},  \cite{ferraty2006nonparametric},
\cite{horvath2012inference}, and \cite{hsing2015theoretical}. 

The mean function $\mu(t)=\expv[X(t)]$ and the covariance function $\cov(X(s), X(t))$ play a fundamental role in FDA. They underpin many of its core methodologies, including functional principal component analysis, dimension reduction, and functional regression and prediction. Consequently, a substantial body of research has focused on estimating these functions under a wide variety of sampling schemes, including sparse, dense, and partially observed designs. Nevertheless, mean and covariance generally do not fully characterize the distribution of the underlying stochastic process.

Suppose that the stochastic processes $X_i$, for $i=1,\ldots,N$, satisfy the stochastic differential equation (SDE)
\begin{align}\label{eq:GeneralSDE}
\diff X_i(t) = b(t, X_i(t)) \,\diff t + \sigma(t, X_i(t)) \,\diff B_i(t), 
\qquad t \in [0,T], 
\qquad X_i(0) \sim X_0,
\end{align}
where $b\colon [0,T] \times \mathbb{R} \to \mathbb{R}$ denotes the \emph{drift}, $\sigma\colon [0,T] \times \mathbb{R} \to \mathbb{R}$ the \emph{diffusion function}, $B_i = (B_i(t)\colon t \in [0,T])$ a Brownian motion, and $X_0$ the initial distribution. Under conditions ensuring existence and uniqueness of the solution, the SDE completely specifies the distribution of the process and thus provides a probabilistic model for functional data. This enables inference on distributional features that cannot be recovered from the mean and covariance functions alone. 

One important application is the construction of prediction bands. Suppose that the trajectory of $X_i$ has been observed on $[0,t_0]$, and that the objective is to construct a band covering its future evolution on $(t_0,T]$ with a prescribed probability. Such prediction bands require inference on the conditional distribution of the future trajectory given its observed history. Another important application concerns first-passage times, which play a central role in the analysis of biomarker trajectories. If $X_i$ represents the evolution of a patient's biomarker, a clinically relevant event may occur when the biomarker first exceeds a threshold. Examples include the time until a tumor reaches a critical size or a patient's viral load exceeds a treatment-failure threshold. In general, these quantities cannot be inferred from the mean and covariance functions alone.

Although distributional models based on stochastic differential equations are widely used in financial mathematics, they have only recently begun to attract attention in functional data analysis. Recent work by \citet{comte2020nonparametric}, \citet{denis2021ridge}, \citet{mohammadi2024nonparametric}, and \citet{kokoszka2025functional} models functional data through SDEs and develops estimation procedures for a variety of observation schemes, ranging from fully observed trajectories to sparse designs. Closely related is the work of \citet{zhou2024dynamic}, who also consider an SDE model for sparsely observed functional data but focus on trajectory reconstruction rather than parameter estimation.

The primary objective of this paper is to advance distributional modeling in FDA by proposing a new estimation methodology. To this end, we consider a parsimonious class of SDEs that gives rise to Gauss--Markov processes. We demonstrate the practical utility of the proposed model through an application to tumor growth data, where we address several clinically relevant problems, including the construction of prediction bands and the estimation of tumor age.

The principal methodological contribution of the paper lies in the proposed estimation procedure. Existing approaches are predominantly based on method-of-moments techniques. In particular, \citet{kokoszka2025functional} for densely observed functional data and \citet{mohammadi2024nonparametric} for sparsely observed data first estimate the mean and covariance functions and then exploit the relationship between these quantities and the underlying model parameters. In contrast, our approach estimates the model parameters directly from the observed trajectories, thereby avoiding a moment-estimation step.

Beyond its statistical efficiency, the proposed likelihood framework naturally accommodates sampling schemes subject to \emph{outcome-dependent dropout} (ODD). We adopt the term \emph{dropout} from the longitudinal data literature, where it refers to subjects becoming unavailable during the course of a study (see, for instance, 
\cite{little1995modeling},
\cite{lindsey2000}, \cite{hoganetal2004}, \cite{wilsonetal2025}). 
In the context of functional data, ODD describes sampling schemes in which the decision to continue or terminate observations depends on the most recently observed value of the process. Such designs arise in diverse applications. Examples include battery degradation studies, where capacity-loss curves are observed until the capacity falls below a prescribed threshold and the battery is declared to have reached its end of life \citep{ahwiadi2025battery} or clinical trials, where treatment is discontinued upon exceeding a pre-defined biomarker safety threshold \citep{Watkinsetal1994}.

\begin{figure}
    \centering
    \includegraphics[width=0.8\linewidth]{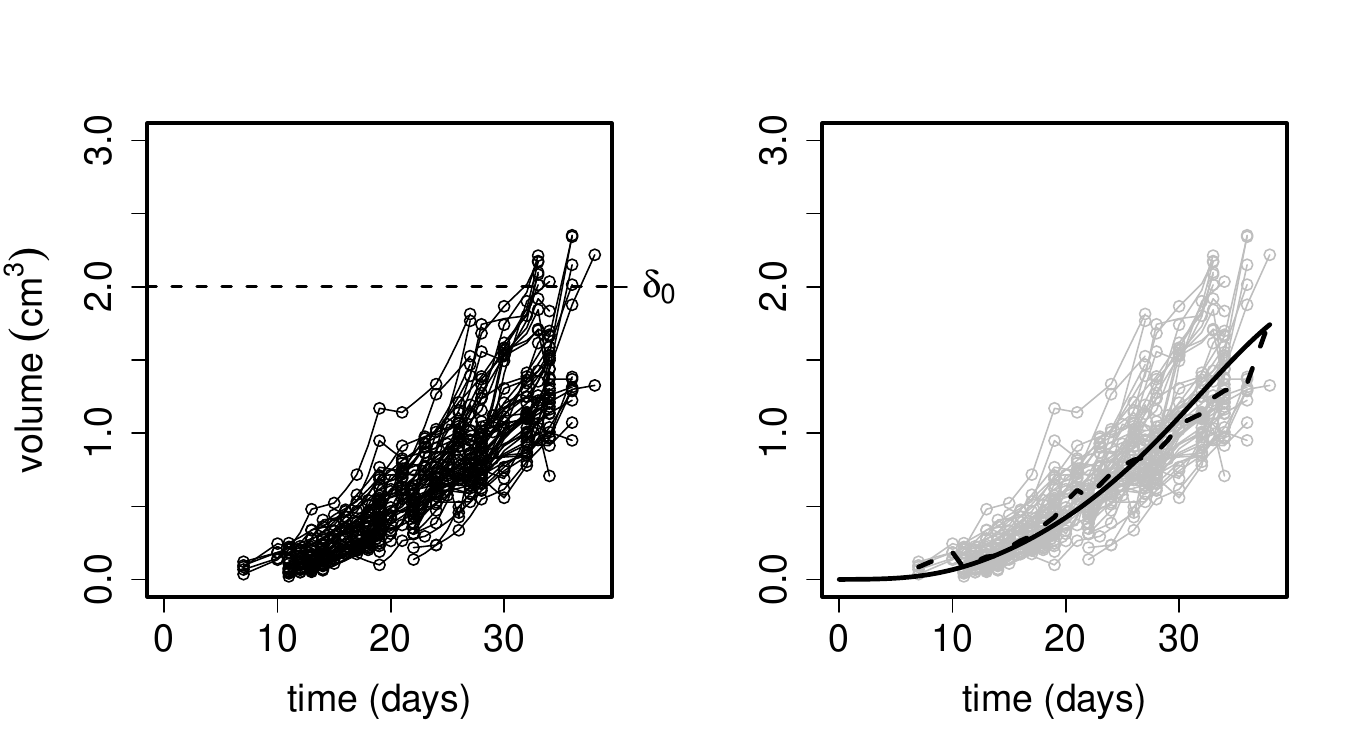}
    \caption{Tumor growth curves of $N = 66$ mice obtained from different animal studies. Data collection terminates when a subject's tumor volume exceeds the threshold of $\delta_0 = 2\text{\;cm}^3$. The right plot shows the estimated median function under the proposed model (solid) together with a naive point-wise estimate that ignores missing data (dashed).}
    \label{fig:tumor_growth}
\end{figure}

To illustrate the proposed methodology, we analyze a secondary data set of tumor growth compiled by \cite{vaghi2020population}. The data originate from eight studies conducted in accordance with established guidelines for the care and use of laboratory animals (see the ethics statement in \cite{vaghi2020population}). In each study, LM2-4$^\text{LUC+}$ breast tumor cells were injected into the right inguinal mammary fat pads of $N = 66$ mice. Tumor volumes were subsequently monitored for up to $T=38$ days or until the tumor volume reached the ethical limit of $2\;\text{cm}^3$. Tumor size was measured using calipers at irregular, subject-specific observation times. 

Let $Y_i = (Y_i(t) \colon t \in [0,T])$ denote the tumor growth trajectory for the $i$-th mouse and let $0 \leq t_{i1} < \dots < t_{in_i} \leq T$ be its observation times. The observed data for subject $i$ therefore consist of the pairs $(t_{ij},Y_i(t_{ij}))$, $j=1,\ldots,n_i$. Figure~\ref{fig:tumor_growth} displays the observed trajectories. Measurements are discontinued once the tumor volume exceeds the ethical threshold of $2\text{\;cm}^3$, giving rise to an ODD mechanism. Because the decision to terminate observations depends only on the currently observed tumor volume, the resulting missingness mechanism satisfies Rubin's \emph{missing-at-random} (MAR) assumption. Accounting for this mechanism is essential for valid statistical inference: as illustrated in the right panel of Figure~\ref{fig:tumor_growth}, the naive pointwise median systematically underestimates the true median toward the end of the observation period. One of the principal advantages of the proposed likelihood-based framework is that it naturally accommodates this MAR sampling mechanism, whereas existing moment-based procedures generally rely on the stronger missing-completely-at-random (MCAR) assumption and consequently suffer from bias 
under ODD. 

The rest of the paper is organized as follows. Section~\ref{sec:GMProcesses} introduces the Gauss-Markov framework as the distributional model of the underlying stochastic processes. 
Section~\ref{sec:Measurements} presents details on the sparse sampling regime and on the outcome-dependent dropout mechanism. Section~\ref{sec:Estimation} develops the proposed likelihood estimation procedure, whose theoretical properties in terms of convergence rates are investigated in Section~\ref{sec:ConvergenceRates}. We present finite-sample properties and a detailed analysis of the tumor growth data in Sections~\ref{sec:NumericalIllustration} and \ref{sec:RealDataIllustration}. Section~\ref{sec:Conclusions} concludes. Proofs of the theoretical results are provided in a technical Appendix.

\section{Gauss-Markov processes}
\label{sec:GMProcesses}

Consider a generic stochastic process $X=(X(t)\colon t\in[0,T])$, representing a single observation in a functional random sample. The objective of a distributional model is to characterize the probabilistic evolution of $X$ over time. Such a model should be sufficiently flexible to capture the temporal dependence of the process while remaining amenable to statistical inference.

A natural starting point is to view the observations as a discretized time series,
\[
Y_j=X(t_j), \qquad j=1,\ldots,n,
\]
and to model these data using a classical time series model. The simplest example is the first-order autoregressive model, AR(1). Passing from discrete to continuous time leads to its continuous-time analogue, the continuous autoregressive model CAR(1), which is equivalent to the Ornstein--Uhlenbeck process. More generally, the Ornstein--Uhlenbeck process belongs to the class of Gauss--Markov processes, obtained as solutions of linear SDEs. These processes provide a parsimonious yet flexible family of distributional models and have been used extensively in fields such as statistical physics, engineering, finance, and the environmental sciences. Their tractability and probabilistic structure make them particularly attractive for modeling functional data.

\subsection{Definition}
\label{subsec:GM-Def}

Let $b_1,b_2\colon [0,T]\to\mathbb{R}$ be continuous drift functions, let $\sigma\colon [0,T]\to\mathbb{R}$ be a continuous diffusion function, and let $B=(B(t)\colon t\ge0)$ denote a standard Brownian motion. We consider stochastic processes
\[
X=(X(t)\colon t\in[0,T])
\]
that satisfy the linear stochastic differential equation
\begin{equation}\label{eq:SDE}
\begin{cases}
\diff X(t)
=
\bigl(b_1(t)X(t)+b_2(t)\bigr)\,\diff t
+
\sigma(t)\,\diff B(t),
&
t\in[0,T],\\[0.3em]
X(0)=X_0,
\end{cases}
\end{equation}
where the initial value $X_0\sim N(m_0,v_0)$ is independent of $B$. The special case of a deterministic initial condition, $X(0)=m_0$, is obtained by setting $v_0=0$.

Equation~\eqref{eq:SDE} is understood in the integral sense,
\[
X(t)
=
X_0
+
\int_0^t
\bigl(b_1(u)X(u)+b_2(u)\bigr)\,\diff u
+
\int_0^t
\sigma(u)\,\diff B(u),
\qquad t\in[0,T],
\]
where the second integral is an Itô integral. Standard results guarantee the existence of a unique solution under these assumptions. For general introductions to stochastic differential equations, we refer to
\citet{karatzas1991brownian},
\citet{kloeden1992numerical}, and
\citet{oksendal2007stochastic}.

\subsection{Distribution}
\label{subsec:GM-Dist}

The linear SDE~\eqref{eq:SDE} admits an explicit solution. To derive it, consider the associated homogeneous ordinary differential equation
\begin{equation}\label{eq:hLSDE}
\dot{x}(t)=b_1(t)x(t), \qquad x(0)=1.
\end{equation}
Its solution is given by
\[
x(t)=\Phi(0,t),
\]
where
\begin{equation}\label{eq:FundamentalSolution}
\Phi(s,t)
=
\exp\!\left(\int_s^t b_1(u)\,\diff u\right),
\qquad
s,t\in[0,T],
\end{equation}
is the corresponding \emph{fundamental solution}. Applying the variation-of-constants formula yields
\begin{equation}\label{eq:SolutionLSDE}
X(t)
=
\Phi(s,t)X(s)
+
\int_s^t \Phi(u,t)b_2(u)\,\diff u
+
\int_s^t \Phi(u,t)\sigma(u)\,\diff B(u),
\end{equation}
for every \(0\le s<t\le T\).

Since~\eqref{eq:SolutionLSDE} is an affine transformation of jointly Gaussian random variables, the process \(X\) is Gaussian. Moreover, it satisfies the strong Markov property. Throughout the paper we therefore refer to \(X\) as a \emph{Gauss--Markov (GM) process}. Its mean function is given by
\[
\mu(t)
=
\expv[X(t)]
=
\Phi(0,t)m_0
+
\int_0^t
\Phi(u,t)b_2(u)\,\diff u,
\qquad
t\in[0,T].
\]

The following proposition summarizes the unconditional and conditional moments of the process.

\begin{proposition}\label{prop:CondMeanVar}
Let $X$ be the solution of~\eqref{eq:SDE}. Then, for any
$0\le s\le t\le T$ and $x\in\mathbb{R}$,
\begin{align*}
\expv[X(t)]
&=
\Phi(0,t)m_0
+
\int_0^t
\Phi(u,t)b_2(u)\,\diff u,\\[0.3em]
\expv[X(t)\vert X(s)=x]
&=
\Phi(s,t)x
+
\int_s^t
\Phi(u,t)b_2(u)\,\diff u,\\[0.3em]
\cov(X(s),X(t))
&=
\Phi(0,s)\Phi(0,t)v_0
+
\int_0^s
\Phi(u,s)\Phi(u,t)\sigma^2(u)\,\diff u,\\[0.3em]
\var(X(t)\vert X(s)=x)
&=
\int_s^t
\Phi^2(u,t)\sigma^2(u)\,\diff u.
\end{align*}
\end{proposition}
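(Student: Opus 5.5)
All four formulas will be read off the variation-of-constants representation \eqref{eq:SolutionLSDE}, applied once with $s=0$ and once with a general $s$. Two facts about the stochastic integral $I_s(t)=\int_s^t \Phi(u,t)\sigma(u)\,\diff B(u)$ do all the work. First, its integrand is deterministic and continuous, hence square integrable. Second, $I_s(t)$ depends only on Brownian increments on $[s,t]$, so it is independent of $\mathcal{F}_s=\sigma(X_0,B(u)\colon u\le s)$, and in particular of $X(s)$.

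For the unconditional mean, take $s=0$ in \eqref{eq:SolutionLSDE}. The Itô integral has mean zero because the integrand is deterministic, so
\[
\expv[X(t)] = \Phi(0,t)m_0 + \int_0^t \Phi(u,t)b_2(u)\,\diff u.
\]
For the conditional mean and variance, fix $s<t$ and write $X(t) = \Phi(s,t)X(s) + c(s,t) + I_s(t)$, where $c(s,t)=\int_s^t \Phi(u,t)b_2(u)\,\diff u$ is deterministic. Since $I_s(t)$ is independent of $X(s)$, conditioning on $X(s)=x$ gives
\[
\expv[X(t)\mid X(s)=x]=\Phi(s,t)x+c(s,t), \qquad \var(X(t)\mid X(s)=x)=\var(I_s(t)).
\]
By the Itô isometry, $\var(I_s(t))=\int_s^t \Phi^2(u,t)\sigma^2(u)\,\diff u$. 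The degenerate case $s=t$ is trivial.

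For the covariance with $s\le t$, use the decomposition above together with the independence of $I_s(t)$ from $X(s)$:
\[
\cov(X(s),X(t))=\Phi(s,t)\var(X(s)).
\]
Taking $s=0$ in \eqref{eq:SolutionLSDE} and using independence of $X_0$ and $B$ together with the Itô isometry gives
\[
\var(X(s))=\Phi^2(0,s)v_0+\int_0^s\Phi^2(u,s)\sigma^2(u)\,\diff u.
\]
The multiplicative identity $\Phi(s,t)\Phi(a,s)=\Phi(a,t)$, which is immediate from \eqref{eq:FundamentalSolution}, then yields
\[
\Phi(s,t)\Phi^2(0,s)=\Phi(0,s)\Phi(0,t), \qquad \Phi(s,t)\Phi^2(u,s)=\Phi(u,s)\Phi(u,t),
\]
which is exactly the stated covariance formula.

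The only step that needs care is justifying \eqref{eq:SolutionLSDE} itself together with the independence claim. I would verify \eqref{eq:SolutionLSDE} by applying Itô's product rule to $\Phi(0,t)^{-1}X(t)$; no quadratic covariation term appears because $\Phi(0,\cdot)^{-1}$ is $C^1$ and deterministic. I would then note that the Wiener integral $I_s(t)$ is measurable with respect to $\sigma(B(u)-B(s)\colon u\in[s,t])$, which is independent of $\mathcal{F}_s$. Everything else is direct computation.
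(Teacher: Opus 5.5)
Your proposal is correct and follows essentially the same route as the paper: the variation-of-constants formula \eqref{eq:SolutionLSDE}, independence of the Wiener integral $\int_s^t \Phi(u,t)\sigma(u)\,\diff B(u)$ from $X(s)$, and the Itô isometry. You go further in two places. You carry out the covariance computation via $\cov(X(s),X(t))=\Phi(s,t)\var(X(s))$ and the multiplicative property of $\Phi$, which the paper only calls ``similar arguments.'' You also state the independence correctly in terms of the Brownian increments on $[s,t]$, rather than of $(B(u)\colon s\le u\le t)$ as the paper does.
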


For later use, define
\[
m(x,s,t)
=
\expv[X(t)\vert X(s)=x],
\qquad
v(s,t)
=
\var(X(t)\vert X(s)=x).
\]

The expressions in Proposition~\ref{prop:CondMeanVar} reflect a fundamental property of Gaussian processes: the conditional distribution of $X(t)$ given $X(s)=x$ is Gaussian, with a conditional mean that is affine in $x$ and a conditional variance that depends only on the time points $s$ and $t$.

\subsection{Doob representation}
\label{subsec:GM-Doob}

An alternative characterization of Gauss--Markov processes is provided by the Doob representation
\begin{equation}\label{eq:Doob}
    X(t) = \mu(t) + h_0(t) B(h_1(t)),\qquad t\in [0,T],
\end{equation}
for suitable deterministic functions~$h_0, h_1\colon [0,T] \to \mathbb{R}$; see also \cite{mehr1965certain}. Conversely, provided that the functions are sufficiently smooth, Itô's formula \cite[Theorem~4.1.2]{oksendal2007stochastic} implies that a process defined by \eqref{eq:Doob} satisfies a linear SDE of the form~\eqref{eq:SDE}. Thus, the Doob representation and the linear SDE formulation are equivalent descriptions of a broad class of Gauss--Markov processes.

The representation is particularly useful because it separates the deterministic and stochastic components of the process. We will exploit this feature in Section~\ref{sec:RealDataIllustration} to construct simultaneous prediction bands. The next proposition expresses the functions $h_0$ and $h_1$ in terms of the parameters of the SDE.

\begin{proposition}\label{prop:Doob}
    Let $X$ be the solution of SDE~\eqref{eq:SDE}. The functions $h_0$ and $h_1$ in Doob's representation~\eqref{eq:Doob} are then given by
    \begin{itemize}
        \item $h_0(t) = \Phi(0, t)$,
        \item $h_1(t) = v_0 + \int_0^t \Phi(0, u)^{-2} \sigma^2(u) \, \diff u$,
    \end{itemize}
    for any $0 \leq t \leq T$.
\end{proposition}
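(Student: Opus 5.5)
Starting from the variation-of-constants formula \eqref{eq:SolutionLSDE} with $s=0$, we have
\[
X(t)-\mu(t) = \Phi(0,t)\Bigl(X_0-m_0 + \int_0^t \Phi(0,u)^{-1}\sigma(u)\,\diff B(u)\Bigr),
\]
because $\Phi(u,t)=\Phi(0,t)\Phi(0,u)^{-1}$ by \eqref{eq:FundamentalSolution}. This suggests setting $h_0(t)=\Phi(0,t)$ and working with the process
\[
M(t) = X_0-m_0+\int_0^t \Phi(0,u)^{-1}\sigma(u)\,\diff B(u).
\]
The claim then becomes that $M$ has the same law as $\bigl(W(h_1(t))\colon t\in[0,T]\bigr)$ for a standard Brownian motion $W$ and the stated $h_1$. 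The representation \eqref{eq:Doob} should be read as an equality in distribution of processes, or as an almost sure equality on an enlarged probability space if one insists.

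The key step is to identify $M$ as a continuous centered Gaussian process with independent increments.
\begin{itemize}
\item $X_0-m_0\sim N(0,v_0)$ is independent of $B$.
\item The Wiener integral of the deterministic integrand $\Phi(0,u)^{-1}\sigma(u)$ is a continuous centered Gaussian martingale.
\item By the Itô isometry, its covariance is $\int_0^{s\wedge t}\Phi(0,u)^{-2}\sigma^2(u)\,\diff u$.
\end{itemize}
Hence
\[
\cov(M(s),M(t)) = v_0+\int_0^{s\wedge t}\Phi(0,u)^{-2}\sigma^2(u)\,\diff u = h_1(s\wedge t) = h_1(s)\wedge h_1(t),
\]
where the last equality uses that $h_1$ is nondecreasing. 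The covariance of $W(h_1(\cdot))$ is also $h_1(s)\wedge h_1(t)$. Both processes are centered Gaussian, so their finite-dimensional distributions agree, and both have continuous paths. Their laws therefore coincide. As a consistency check, one can compare with Proposition~\ref{prop:CondMeanVar}: $\Phi(0,s)\Phi(0,t)h_1(s)$ reproduces $\cov(X(s),X(t))$ for $s\le t$.

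The main obstacle is the almost sure version of the statement. Let $W$ be Brownian motion on $[0,\infty)$, and note that $h_1(0)=v_0\ge 0$. For an almost sure construction I would proceed as follows.
\begin{enumerate}
\item Take $W(v_0)=X_0-m_0$ together with an independent Brownian bridge filler on $[0,v_0]$.
\item Apply the Dambis--Dubins--Schwarz theorem to the continuous martingale $\int_0^t\Phi(0,u)^{-1}\sigma\,\diff B$, whose quadratic variation is $h_1(t)-v_0$.
\item This gives a Brownian motion $\tilde W$ with $\int_0^t\Phi(0,u)^{-1}\sigma\,\diff B=\tilde W(h_1(t)-v_0)$.
\item Set $W(v_0+r)=W(v_0)+\tilde W(r)$.
\end{enumerate}
If $\sigma$ vanishes on some intervals, $h_1$ is constant there. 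This causes no problem, apart from possibly enlarging the space so that $\tilde W$ is defined beyond the final value $h_1(T)$.

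Multiplying the identity for $M$ by $h_0(t)$ and adding $\mu(t)$ then gives \eqref{eq:Doob} with the stated $h_0$ and $h_1$.
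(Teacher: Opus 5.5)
Your proof is correct and is essentially the paper's argument: both rest on the fact that a centered Gaussian process is determined by its covariance, and your factorization $X(t)-\mu(t)=\Phi(0,t)M(t)$ with $\cov(M(s),M(t))=h_1(s\wedge t)=h_1(s)\wedge h_1(t)$ is exactly the triangular factorization of $\cov(X(s),X(t))$ from Proposition~\ref{prop:CondMeanVar} that the paper matches against $h_0(s)h_0(t)\min\{h_1(s),h_1(t)\}$. Your path-continuity remark and the Dambis--Dubins--Schwarz almost sure construction go beyond the paper, which only establishes equality in law; they are sound but not needed for the statement.
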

\noindent Throughout the paper, $h_0$ is chosen to be positive. Furthermore, $h_1$ is strictly increasing, with $h_1(0)=v_0$, the variance of the initial condition. Finally, note that the pair $(h_0,h_1)$ is identifiable only up to multiplication of $h_0$ and reciprocal scaling of $h_1$ by a positive constant.

\section{Sparse sampling and outcome-dependent dropout}
\label{sec:Measurements}

Let $X_1,\ldots,X_N$ be independent copies of the Gauss--Markov process $X$. Throughout the paper, we assume that the trajectories are observed sparsely and are subject to outcome-dependent dropout. The sparse sampling scheme is introduced in Section~\ref{sec:SparseDesign}, while the dropout mechanism is described in Section~\ref{sec:MissingData}.

\subsection{Sparse sampling}\label{sec:SparseDesign}

For each subject $i=1,\ldots,N$, observations are collected at subject-specific time points,
\[
X_{ij}=X_i(T_{ij}),
\qquad
j=1,\ldots,n_i,
\]
where
\begin{enumerate}[label=(\alph*)]
\item $N$ denotes the sample size;
\item $n_i$ is the number of design points for subject $i$;
\item the design points $T_{ij}$ are independent across subjects, uniformly distributed on $[0,T]$, and ordered ($0\le T_{i1}<\cdots<T_{in_i}\le T$);
\item $T_{i1},\ldots,T_{in_i}$ and $X_i$ are independent.
\end{enumerate}
Assumptions (a)--(d) are standard in functional data analysis under sparse designs; see, for example, \citet{yao2005functional} and \citet{li2010uniform}. In particular, random observation times that are independent of the underlying process, with a sampling density bounded away from zero on the observation interval, constitute a commonly used sparse random-design framework. For simplicity, we do not include an additional measurement-error term. Any short-term variability is assumed to be captured by the diffusion component of the SDE.

\subsection{Outcome-dependent dropout}\label{sec:MissingData}

A principal advantage of the proposed likelihood framework is that it naturally accommodates ODD, as illustrated by the tumor-growth data discussed in Section~\ref{sec:intro}. Throughout the paper we consider the following sampling mechanism.

\begin{equation*}
\parbox{0.83\textwidth}{
There exists a threshold \(\delta_0\in\mathbb{R}\) such that $(T_{ij},X_{ij})$ is observed if and only if
\begin{equation}
\max\{X_{i1},\ldots,X_{i,j-1}\}\le\delta_0.
\tag{M}\label{M}
\end{equation}
}
\end{equation*}

Thus, observations on a subject cease immediately after the process first exceeds the threshold~$\delta_0$. In the tumor-growth application, for example, measurements stop once the tumor volume exceeds $2\mathrm{\;cm}^3$. The methodology developed below does not require knowledge of the threshold value and therefore also applies when $\delta_0$ is unknown. Complete observations correspond to the limiting case $\delta_0=\infty$.

Note that mechanism \eqref{M} does not contradict the independence assumption on the sampling design in (d) above. The scheduled observation times are generated independently of the process before any measurements are taken, whereas the realized number of observations may be smaller because the sampling protocol terminates once the threshold is crossed.

\subsection{Inference under outcome-dependent dropout}
\label{sec:ConsequencesOfMissingData}

Most statistical procedures for partially observed functional data assume that the missingness mechanism is MCAR, meaning that the observation pattern is independent of the underlying trajectory. Mechanism (M) clearly violates this assumption because the probability of observing future measurements depends on previously observed values of the process. Nevertheless, as the following proposition shows, it satisfies the weaker MAR condition. Let $O\subset\{T_1,\ldots,T_n\}$ denote the random set of observed measurement locations, and let $\mathcal{F}_K=\sigma\{X(t)\colon t\in K\}$, $K\subset[0,T]$, be the $\sigma$-algebra generated by $X$ on the set $K$.

\begin{proposition}\label{prop:MAR}
Under the outcome-dependent dropout mechanism \eqref{M}, the observation process satisfies the missing-at-random (MAR) property. Specifically,
\[
\prob(O\subset K\vert\mathcal{F}_{[0,T]})
=
\prob(O\subset K\vert\mathcal{F}_K),
\]
for every measurable set $K\subset[0,T]$.
\end{proposition}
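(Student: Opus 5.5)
The plan is to write the event $\{O\subset K\}$ explicitly in terms of the design points and the process values at those points, and then show that, conditionally on $\mathcal{F}_{[0,T]}$, its probability depends on $X$ only through values at points of $K$. Write $T_1<\dots<T_n$ for the scheduled design points, independent of $X$ by (d). Under \eqref{M}, the realized observation set is
\[
O=\{T_1,\ldots,T_J\},\qquad J=\min\bigl\{j\colon X(T_j)>\delta_0\bigr\}\wedge n,
\]
so $O$ is the initial segment of the design ending at the first exceedance. Hence $O\subset K$ holds if and only if there is an index $k\in\{1,\ldots,n\}$ such that $T_1,\ldots,T_k\in K$, $J=k$, and (when $k<n$) $X(T_k)>\delta_0$ while $X(T_j)\le\delta_0$ for $j<k$; for $k=n$ the last requirement is just $X(T_j)\le\delta_0$ for all $j<n$. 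This gives a disjoint decomposition
\[
\ind\{O\subset K\}=\sum_{k=1}^{n}\ind\{T_1,\ldots,T_k\in K\}\,g_k\bigl(X(T_1),\ldots,X(T_k)\bigr),
\]
where each $g_k$ is an explicit indicator of threshold events. The key observation is that, on the event $\{T_1,\ldots,T_k\in K\}$, every argument of $g_k$ is a value of $X$ at a point of $K$.

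Next I condition on $\mathcal{F}_{[0,T]}$. Because the design is independent of $X$, I would compute the conditional expectation by integrating over the design distribution with $X$ held fixed, i.e.\ via Fubini or the substitution lemma for independent variables:
\[
\prob(O\subset K\mid\mathcal{F}_{[0,T]})=\sum_{k}\int\ind\{t_1,\ldots,t_k\in K\}\,g_k\bigl(X(t_1),\ldots,X(t_k)\bigr)\,\diff P_{T}(t_1,\ldots,t_n).
\]
This step needs joint measurability of $(t,\omega)\mapsto X(t,\omega)$, which holds because $X$ has continuous paths. In the integrand, only $X(t)$ with $t\in K$ appears. The resulting random variable is therefore $\mathcal{F}_K$-measurable, which I would argue pathwise: it is a functional of the restriction $X|_K$. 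Being $\mathcal{F}_K$-measurable while also equal to the conditional expectation given the larger $\sigma$-algebra $\mathcal{F}_{[0,T]}$, it coincides with $\prob(O\subset K\mid\mathcal{F}_K)$ by the tower property.

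The main technical obstacle is the measurability claim when $K$ is uncountable and arbitrary. I would handle it by approximating the integral by Riemann sums, or by noting that the integrand is a measurable function of $(t,(X(s))_{s\in K})$ restricted to $t\in K^k$. Alternatively, I would first prove that for fixed $t\in K^k$ the map $\omega\mapsto g_k(X(t_1),\ldots,X(t_k))$ is $\mathcal{F}_K$-measurable, and then use a monotone class argument on product-measurable integrands to pass to the integral. If needed, I would also let $n$ be random, independent of $X$, and sum over its values.
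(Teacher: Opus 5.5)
Your proposal is correct and follows the paper's proof essentially step for step. You use the same decomposition of $\ind\{O\subset K\}$ over the dropout index (the paper's $N^*$), integrate out the independent design with $X$ held fixed, and observe that each summand involves $X$ only at points of $K$. Your explicit appeal to the tower property and your attention to the $\mathcal{B}\otimes\mathcal{F}_K$ joint-measurability issue fill in details the paper leaves implicit; that issue is most cleanly settled via path continuity of $X|_K$ and a countable dense subset of $K$, rather than by Riemann sums.
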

The MAR framework was introduced by \citet{rubin1976inference} and has since been refined by \citet{seaman2013meant} and \citet{farewell2022missing}. A functional-data analogue has recently been proposed by \citet{ofner2025testing}. 

Proposition~\ref{prop:MAR} has important implications for statistical inference. Under the ODD mechanism~(M), moment-based estimators are generally inconsistent because they are computed from the observed sample rather than the target population. Consequently, sample estimates of the mean and covariance converge to conditional rather than unconditional moments. In contrast, the MAR property ensures that likelihood-based inference remains valid without explicitly modeling the missingness mechanism (see \cite{rubin1976inference,seaman2013meant}). This observation motivates the likelihood-based estimation procedure developed in Section~\ref{sec:Estimation}. Appendix~\ref{app:A} provides simple examples illustrating the contrasting behavior of moment-based and likelihood-based estimators.

\section{Estimation}
\label{sec:Estimation}

The MAR property established in the previous section provides the basis for likelihood-based inference. We now derive a maximum likelihood estimator for the parameters of the Gauss--Markov process X defined by the SDE~\eqref{eq:SDE}. Let $f(x_{i1},\ldots,x_{in_i},t_{i1},\ldots,t_{in_i})$
denote the joint density of the observations from subject $i$. Since the sampling design is independent of the underlying process,
\[
f(x_{i1},\ldots,x_{in_i},t_{i1},\ldots,t_{in_i})
=
f_{X_i(t_{i1}),\ldots,X_i(t_{in_i})}
(x_{i1},\ldots,x_{in_i})
\,
f_{T_{i1},\ldots,T_{in_i}}
(t_{i1},\ldots,t_{in_i}).
\]
The second factor does not involve the model parameters and can therefore be omitted from the likelihood. Consequently, inference may be based solely on the conditional density of the observed process. By the Markov property,
\begin{equation}\label{eq:Density}
f_{X_i(t_{i1}),\ldots,X_i(t_{in_i})}
=
f_{X_i(t_{i1})}
\prod_{j=2}^{n_i}
f_{X_i(t_{ij})\,|\,X_i(t_{i,j-1})},
\end{equation}
so that the likelihood factorizes into a product of Gaussian transition densities.

\subsection{Likelihood}

Conditional on the measurement locations, the $(-2)$-log-likelihood of the complete data is
\begin{equation*}
\mathcal{L}(b_1, b_2, \sigma, m_0, v_0) = \sum_{i=1}^N \sum_{j=1}^{n_i}\left \lbrace \log(2\pi) + \log(v_{ij}) + \frac{(x_{ij}-m_{ij})^2}{v_{ij}}\right\rbrace,
\end{equation*}
where $m_{i1} = \Phi(0,t_{i1}) \, m_0 + m(0,0,t_{i1})$, $v_{i1} = \Phi(0,t_{i1})^2\, v_0 + v(0, t_{i1})$, as well as
\begin{align*}
m_{ij} = m(x_{i,j-1}, t_{i,j-1}, t_{ij}),
\qquad v_{ij} = v(t_{i,j-1}, t_{ij}),
\qquad j = 2, \dots, n_i.
\end{align*}
The dependence of $m_{ij}$ and $v_{ij}$ on the model parameters is discussed in Proposition~\ref{prop:CondMeanVar}. Under the dropout mechanism~\eqref{M}, the complete likelihood is not observable because some responses are missing. Since the missingness mechanism is MAR, inference can instead be based on the observed likelihood
\begin{equation}\label{eq:Logl}
\mathcal{L}_\text{obs}(b_1, b_2, \sigma, m_0, v_0) = \sum_{i=1}^N \sum_{j=1}^{n_i}\left \lbrace \log(2\pi) + \log(v_{ij}) + \frac{(x_{ij}-m_{ij})^2}{v_{ij}}\right\rbrace \ind\{\text{$x_{ij}$ obs}\}.
\end{equation}
Unlike the complete likelihood, $\mathcal{L}_\text{obs}$ depends only on observed measurements and is therefore directly computable from the available data.

\subsection{Sieve maximum likelihood estimation}

The drift and diffusion functions are infinite-dimensional parameters and therefore cannot be estimated directly by finite-dimensional optimization. We adopt a sieve maximum likelihood approach and approximate these functions by finite-dimensional basis expansions. Specifically, let $M=M(N)$ be a truncation parameter satisfying $M\to\infty$ as $N\to\infty$. As in other sieve methods, the choice of $M$ balances approximation bias against estimation variance.

Consider the cosine basis $\varphi_1(t) = 1/\sqrt{T}$, $\varphi_{k+1}(t) = \sqrt{2/T} \cos(k \pi t/T)$ for $k \geq 1$, and the expansions
\begin{equation}\label{eq:Expansion}
\begin{aligned}
        b_1(t) = \sum_{k=1}^\infty  b_{1k} \varphi_k(t), && b_2(t) = \sum_{k=1}^\infty  b_{2k} \varphi_k(t), &&\sigma(t) = \exp\left(\sum_{k=1}^\infty  \sigma_{k} \varphi_k(t)\right).
\end{aligned}
\end{equation}
Since the drift and diffusion functions are assumed to be smooth on $[0,T]$, they can be approximated arbitrarily well by finite linear combinations of the cosine basis functions. This motivates replacing the infinite-dimensional estimation problem by a sequence of finite-dimensional approximation spaces whose dimension increases with the sample size. To this end, note that the sieve approximations corresponding to \eqref{eq:Expansion} are 
\begin{equation}\label{eq:ExpansionTruncated}
\begin{aligned}
        b_{1N}(t) = \sum_{k=1}^M  b_{1k} \varphi_k(t), && b_{2N}(t) = \sum_{k=1}^M b_{2k} \varphi_k(t), &&\sigma_{N}(t) = \exp\left(\sum_{k=1}^M  \sigma_{k} \varphi_k(t)\right),
\end{aligned}
\end{equation}
for $t \in [0,T]$. The exponential parameterization guarantees that $\sigma_N$ is non-negative. The sieve maximum likelihood estimator is obtained by minimizing the observed likelihood $\mathcal{L}_\text{obs}$ in~\eqref{eq:Logl},
\begin{equation}\label{eq:OptProblem}
\big(\widehat{b}_{1N}, \widehat{b}_{2N}, \widehat{\sigma}_N, \widehat{m}_0, \widehat{v}_0\big) = \argmin_{\substack{b_{1N}, b_{2N}, \sigma_{N}, \\[0.5ex]m_0, \, v_0}}\, \mathcal{L}_\text{obs}(b_{1N}, b_{2N}, \sigma_N, m_0, v_0).
\end{equation}

The optimization problem in \eqref{eq:OptProblem} is finite-dimensional, with parameter dimension $d=3M+2$, which increases with the sample size. This approach is known as sieve maximum likelihood estimation; see Section 3.4 of \citet{van2023weak}. Sieve methods provide a principled framework for estimating infinite-dimensional parameters by approximating them with a sequence of finite-dimensional models whose complexity grows with the sample size. The proposed method  thus also links the statistical assumption of smoothness and the optimization method using the cosine sieve. 

\begin{remark}\label{rem:ChoiceM}
For notational simplicity, all three functional parameters are approximated using the same truncation level $M$. In practice, however, it is often advantageous to choose separate truncation levels $(M_{b_1},M_{b_2},M_\sigma)$ for the drift and diffusion functions. This modification is straightforward, as it just needs notational adjustments, and is adopted in the numerical implementation of Section~\ref{sec:NumericalIllustration}.
\end{remark}

\section{Convergence rates}\label{sec:ConvergenceRates}

This section establishes convergence rates for the proposed sieve maximum likelihood estimator. To simplify the asymptotic analysis, we study an equivalent $M$-estimator based on conditional transition densities. This formulation retains the essential statistical structure of the likelihood, while avoiding technical complications arising from the initial distribution and additive constants.

\subsection{Assumptions}

Let $[0,T] = [0,1]$ and impose the following assumptions throughout this section.
\begin{assumption}\label{ass:X}
The process $X$ solves the SDE~\eqref{eq:SDE} with $X(0)\sim{}  N(0,v_0)$ for some $v_0>0$.
\end{assumption}
\noindent Assumption~\ref{ass:X} specifies that the initial distribution has a positive variance. This assumption is imposed solely to simplify the exposition; the extension to a general Gaussian initial distribution is feasible.

\begin{assumption}\label{ass:ni}
Each subject is measured at $n_i = 2$ locations according to \eqref{M}. 
\end{assumption}

The restriction to two scheduled measurements per subject represents the sparsest sampling design for which the transition density is identifiable. It is adopted for technical convenience and is not essential for the methodology. The second observation remains subject to the dropout mechanism~\eqref{M}, so the asymptotic analysis continues to capture the ODD effect on the likelihood estimation procedure. 

\begin{assumption}\label{ass:beta}
There exist constants $C>0$ and $\beta > 1$ such that the coefficients in~\eqref{eq:Expansion} satisfy $\max\{\abs{b_{1k}}, \abs{b_{2k}}, \abs{\sigma_k}\} \leq C k^{-\beta}$ for all $k \geq 1$.
\end{assumption}
\noindent Assumption~\ref{ass:beta} is a standard smoothness condition requiring the Fourier coefficients of the drift and diffusion functions to decay polynomially. It guarantees that the coefficient functions are uniformly bounded and that the approximation error of the sieve decreases at a polynomial rate.

We then consider the objective function
\begin{equation}\label{eq:SLogl}
S_N(b_1, b_2, \sigma) = \frac{1}{N}\sum_{i=1}^N \left\lbrace-\log(v_{i2}) - \frac{(x_{i2}-m_{i2})^2}{v_{i2}}\right\rbrace \ind\{x_{i1} \leq \delta_0\},
\end{equation}
and introduce the $M$-estimator,
\begin{equation}\label{eq:EstProblem}
\big(\widehat{b}_{1N}, \widehat{b}_{2N}, \widehat{\sigma}_{N}\big) = \argmax_{\substack{b_{1N}, b_{2N}, \sigma_{N}}}\, S_N(b_{1N}, b_{2N}, \sigma_N).
\end{equation}

\begin{table}
    \centering
    \begin{tabular}{|l|c|}
    \hline
    \textit{Notation} & \textit{Definition}\\
     \hline
    $\normww{f} $   &\rule[-3ex]{0pt}{8ex} $\sqrt{\int_0^1 \int_0^{t_2} \frac{1}{t_2 - t_1} \left(\int_{t_1}^{t_2} f(u) \, \diff u\right)^2 \,\diff t_1\, \diff t_2}$ \\
     \hline
     $\normwww{f} $   &\rule[-3ex]{0pt}{8ex} $\sqrt{\int_0^1 \int_0^{t_2} \frac{1}{(t_2 - t_1)^2} \left(\int_{t_1}^{t_2} f(u) \, \diff u\right)^2 \,\diff t_1\, \diff t_2}$ \\
     \hline
    $\norm{f}_{L^2}$  &\rule[-2ex]{0pt}{6ex} $\sqrt{\int_0^1 f(u)^2 \, \diff u}$ \\
         \hline
     $\norm{f}_\infty$  & \rule[-2ex]{0pt}{5ex}$\sup_{u \in [0,1]} \, \abs{f(u)}$ \\
         \hline
    \end{tabular}
    \caption{Weighted norms used in the convergence analysis. The first two norms arise naturally from the likelihood and are ordered from weaker to stronger, followed by the classical $L^2$ and supremum norms for comparison.}
    \label{tab:Norms}
\end{table}

\subsection{Results}

The likelihood and its derived $M$-estimator depend on the unknown coefficient functions only through the (Gaussian) transition means and variances, which in turn involve integral functionals of the drift and diffusion coefficients. Consequently, a local quadratic approximation of the Kullback-Leibler divergence induces weighted quadratic forms different from the classical $L^2$-norm. These norms, introduced in Table~\ref{tab:Norms}, therefore arise naturally in the asymptotic analysis. They satisfy
 \begin{equation*}
\normww{f} \leq \normwww{f} \leq \norm{f}_{L^2} \leq \norm{f}_\infty.
\end{equation*}
Tighter bounds and a reversed inequality for the truncated expansions are discussed in Section~\ref{sec:Norms} of the Appendix.

\begin{theorem}[Convergence rates in weighted norms]\label{thm:ConvergenceRate}
Suppose that Assumptions~\ref{ass:X}--\ref{ass:beta} are satisfied and let $(b_{10}, b_{20}, \sigma_0)$ be the true parameters. Set $M \sim N^{1/2\beta}$. Then,
\begin{equation*}
    \normw{\widehat{b}_{1N} - b_{10}}^2 +\normww{\widehat{b}_{2N} - b_{20}}^2 + \normwww{\widehat{\sigma}_N^2-\sigma_{0}^2}^2 = O_p\left(\log(N) \,N^{-(2\beta-1)/2\beta}\right).
\end{equation*}
as $N \to \infty$.
\end{theorem}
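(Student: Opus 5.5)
Recall that for a Gaussian family the Kullback--Leibler divergence between $N(m_0,v_0)$ and $N(m,v)$ is comparable, on sets where both variances are bounded above and below, to $(m-m_0)^2/v_0+(v/v_0-1)^2$. The proof is therefore a standard sieve $M$-estimation argument in the spirit of \citet[Theorem~3.4.1]{van2023weak}. We view $S_N$ in~\eqref{eq:SLogl} as an empirical mean of i.i.d.\ terms $\ell_\theta(Z_i)$ with $Z_i=(T_{i1},T_{i2},X_{i1},X_{i2})$, and $\theta=(b_1,b_2,\sigma)$. Let $S(\theta)=\expv[\ell_\theta(Z)]$.

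\textbf{Step 1: the population criterion is well specified despite dropout.} Condition on $(T_1,T_2,X_1)$. The indicator $\ind\{X_1\le\delta_0\}$ is measurable with respect to this conditioning, and the conditional law of $X_2$ given $X_1$ is $N(m(X_1,T_1,T_2),v(T_1,T_2))$ by Proposition~\ref{prop:CondMeanVar}. Hence $S(\theta_0)-S(\theta)$ equals twice the expected conditional KL divergence, weighted by $\ind\{X_1\le\delta_0\}$. This is the MAR mechanism of Proposition~\ref{prop:MAR} at work, so $\theta_0$ maximizes $S$.

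\textbf{Step 2: identify the induced semimetric.} Expand $m(x,t_1,t_2)-m_0(x,t_1,t_2)$ and $v/v_0-1$ to first order in $\theta-\theta_0$. Since $\Phi(t_1,t_2)=\exp(\int_{t_1}^{t_2}b_1)$, the mean difference is
\[
x\Big(\int_{t_1}^{t_2}(b_1-b_{10})\Big)+\int_{t_1}^{t_2}(b_2-b_{20})+\text{higher order}.
\]
The variance ratio is approximately $(t_2-t_1)^{-1}\int_{t_1}^{t_2}(\sigma^2-\sigma_0^2)$, using $v_0(t_1,t_2)\asymp t_2-t_1$ by Assumption~\ref{ass:beta}, which bounds all coefficients uniformly. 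Divide the squared mean term by $v_0\asymp t_2-t_1$ and integrate over the uniform design on $\{t_1<t_2\}$. This gives exactly $\normww{\cdot}^2$ for $b_1$ and $b_2$, and $\normwww{\cdot}^2$ for $\sigma^2$.

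To separate the $b_1$ and $b_2$ contributions we need that, restricted to $\{X_1\le\delta_0\}$, the variable $X_1$ given $T_1=t_1$ is nondegenerate. This holds because $v_0>0$ (Assumption~\ref{ass:X}), so $X(t_1)$ has variance bounded below and positive mass on both sides of any level below $\delta_0$. Hence $\expv[(aX_1+c)^2\ind\{X_1\le\delta_0\}\mid T_1]\gtrsim a^2+c^2$. This is where the random initial condition enters.

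The output of this step is the two-sided bound $S(\theta_0)-S(\theta)\asymp d(\theta,\theta_0)^2$, with $d^2$ the left-hand side of the theorem. It holds in a neighbourhood of $\theta_0$ within a uniformly bounded sieve. Consistency, which is needed to enter this neighbourhood, comes from a preliminary uniform law of large numbers over the compactified sieve.

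\textbf{Step 3: modulus of continuity and rate.} The sieve $\Theta_M$ has dimension $3M$, and the map $\theta\mapsto\ell_\theta$ is Lipschitz in the coefficient vector, up to the factor $1+X_2^2$, which has Gaussian tails. Bracketing entropy is therefore $\lesssim M\log(1/\epsilon)$ after truncating the Gaussian tails at a level of order $\sqrt{\log N}$. A maximal inequality then gives
\[
\expv\sup_{d(\theta,\theta_0)<\delta}\abs{\mathbb{G}_N(\ell_\theta-\ell_{\theta_0})}\lesssim \delta\sqrt{M\log N}.
\]
Let $\theta_{0M}$ be the truncation of $\theta_0$. Its approximation error satisfies $d(\theta_{0M},\theta_0)^2\lesssim\norm{\cdot}_{L^2}^2\lesssim\sum_{k>M}k^{-2\beta}\asymp M^{1-2\beta}$, using Assumption~\ref{ass:beta} and the smoothness of $\exp$. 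The rate theorem for sieve $M$-estimators then yields $d(\widehat\theta,\theta_0)^2=O_p(M\log N/N+M^{1-2\beta})$. With $M\sim N^{1/2\beta}$ the first term is $\log N\,N^{-(2\beta-1)/2\beta}$ and the second is $N^{-(2\beta-1)/2\beta}$, which gives the claim.

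\textbf{Main obstacle.} I expect the hardest step to be the lower bound in Step 2. It must be uniform over the growing sieve, with the remainder of the linearization controlled by $d^2$ itself and not by a stronger norm such as $\norm{\cdot}_\infty$. My first attempt would be to use the reverse inequality for truncated expansions mentioned in Section~\ref{sec:Norms} of the Appendix to bound $\norm{\cdot}_\infty$ on $\Theta_M$ by a power of $M$ times $d$. I would then check that, at the target rate, this still makes the remainder negligible, which requires $\beta>1$. The decoupling of $b_1$ from $b_2$ on the truncated region $\{X_1\le\delta_0\}$ also needs care.
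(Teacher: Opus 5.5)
Your architecture matches the paper's. You apply Theorem~\ref{thm:VdV} with the truncated Kullback--Leibler divergence comparable to $d^2$, bracketing entropy of order $M\log$, and approximation error $M^{1-2\beta}$ balanced at $M\sim N^{1/2\beta}$. Step~1 and the role you give the random initial condition are also right.

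\textbf{Main gap: Step~2.} The fix you propose for the linearisation remainder fails under $\beta>1$. Lemma~\ref{lem:LowerBoundNorm} and $\norm{f}_\infty^2\le 2M\norm{f}_{L^2}^2$ only give $\norm{b_1-b_{10N}}_\infty^2\lesssim M^3\normww{b_1-b_{10N}}^2$. At the target rate this is of order $M^4\log N/N\asymp N^{2/\beta-1}\log N$, which is small only if $\beta>2$. Moreover, Theorem~\ref{thm:VdV} needs the curvature bound on every shell $r>\underline{r}_N$, not only at $r_N$.

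The missing observation is that no linearisation is needed. As you note, Assumption~\ref{ass:beta} with $\beta>1$ bounds every element of $\Theta_N$ uniformly in $\norm{\cdot}_\infty$. The mean value theorem then gives exact two-sided comparisons such as $\abs{\Phi(T_1,T_2)-\Phi_0(T_1,T_2)}\asymp\abs{\int_{T_1}^{T_2}(b_1-b_{10})(u)\,\diff u}$. Together with $v\asymp T_2-T_1$ and $-\log x+x-1\asymp(x-1)^2$ on the bounded range of $x=v_0(T_1,T_2)/v(T_1,T_2)$, this makes Lemma~\ref{lem:KLlower} hold globally on the sieve, with no consistency step and no remainder. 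Lemma~\ref{lem:Bias} then transfers it to $\theta_{0N}$. Even on your route, consistency in $\norm{\cdot}_\infty$ (available from the $\norm{\cdot}_\infty$-compactness of the constrained parameter set) would make the relative linearisation error $o_p(1)$, so sup-norm control is not the obstacle.

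What does need work are terms your expansion calls higher order but which are first order:
\begin{itemize}
\item $b_1$ enters the $x$-free part of the mean through $\int_{T_1}^{T_2}\{\Phi_0(u,T_2)-\Phi(u,T_2)\}b_2(u)\,\diff u$, and the transition variance through $\Phi^2(u,T_2)$.
\item The weights $\Phi_0(u,T_2)$ and $\Phi_0^2(u,T_2)$ inside the integrals cannot be dropped pointwise, because $b_2-b_{20}$ and $\sigma^2-\sigma_0^2$ change sign.
\end{itemize}
The paper bounds the cross terms from above, $R_2,R_4\lesssim\normww{b_1-b_{10}}^2$. It absorbs them into the $R_1$ term via $(x+y)^2\ge(1-\varepsilon^{-1})x^2+(1-\varepsilon)y^2$ with $\varepsilon$ close to $1$; $R_1$ comes from exactly your nondegeneracy of $X_1$ on $\{X_1\le\delta_0\}$. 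The weights are removed only after averaging over the design, by the integration-by-parts argument for $R_3$ and $R_5$ in Lemma~\ref{lem:Rs}.

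\textbf{Secondary gap: Step~3.} The localized maximal inequality for uniformly bounded classes has a correction term $J^2\Lambda/(\delta^2\sqrt N)$, where $\Lambda$ is the uniform bound of the class. After truncating at level $\sqrt{\log N}$ you have $\Lambda\asymp\log N$, so your $\phi_N$ contains a term of order $M\log^2N/\sqrt N$. At least for $\beta\le 2$, this only yields $d^2=O_p(M\log^2 N/N)$.

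The paper avoids truncation. By Lemma~\ref{lem:FG}, $\abs{\ell_\theta-\ell_{\underline\theta}}\lesssim F\,G$, where $F=1+Z_1^2+Z_2^2$ has conditional exponential moments. Hence $\norm{\kappa(\ell_\theta-\ell_{\underline\theta})}_{\textup{B}}^2\lesssim\expv[G^2]\lesssim d^2(\theta,\underline\theta)$ by Lemma~\ref{lem:G}. The Bernstein-norm maximal inequality has an envelope-free correction, giving $\phi_N(r)=\sqrt{M\log M}\,r\bigl(1+\sqrt{M\log M}/(r\sqrt N)\bigr)$ and hence the stated single $\log N$.

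The bound $\expv[G^2]\lesssim d^2$ is also the step you leave implicit when you index the maximal inequality by $d$-balls. Lipschitz dependence on the coefficient vector controls bracketing numbers, but not the $L^2$ or Bernstein size of $\{\ell_\theta-\ell_{\theta_0}\colon d(\theta,\theta_0)\le\delta\}$.
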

\noindent The proof of Theorem~\ref{thm:ConvergenceRate} is given in Section~\ref{sec:MainProofs} of the Appendix. The convergence rate depends on the smoothness parameter $\beta$, measured through the decay of the Fourier coefficients, as is typical in nonparametric function estimation and ill-posed inverse problems; see \citet{hall2007methodology} and \citet{kato2012estimation}. As smoothness increases ($\beta\to\infty$), the convergence rate approaches the parametric rate $N^{-1}$. 

Theorem~\ref{thm:ConvergenceRate} is formulated in weighted norms that reflect the intrinsic difficulty of estimating the three coefficient functions. Since $\normww{f} \leq \normwww{f}$, the strongest convergence guarantee is obtained for the diffusion coefficient $\sigma^2$, whereas the convergence guarantee obtained for the drift coefficients $b_1$ and $b_2$ is weaker. 

Since the intrinsic norms in Theorem~\ref{thm:ConvergenceRate} are tailored to the likelihood geometry, it is also useful to derive convergence rates in the more familiar $L^2$-norm, which provides a direct and interpretable measure of estimation accuracy for the coefficient functions and facilitates comparison with existing nonparametric results. This is done in the next corollary, which follows directly from the common weighted norms appearing in Theorem~\ref{thm:ConvergenceRate}, together with the reversed inequalities in Section~\ref{sec:Norms} of the Appendix.

\begin{corollary}[Convergence rates in the $L^2$-norm]\label{cor:L2Rates}
If the assumptions of Theorem~\ref{thm:ConvergenceRate} are satisfied,  the following $L^2$ convergence rates hold. Choose $M\sim N^{1/(2\beta+2)}$, then 
\begin{align*}
\|\widehat b_{1N}-b_{10}\|_{L^2}^2
&=
O_p\!\left(
\log(N)\,
N^{-(2\beta-1)/(2\beta+2)}
\right),\\
\|\widehat b_{2N}-b_{20}\|_{L^2}^2
&=
O_p\!\left(
\log(N)\,
N^{-(2\beta-1)/(2\beta+2)}
\right).
\end{align*}
Choose $M\sim N^{1/(2\beta+1)}$, then
\begin{align*}
\|\widehat\sigma_N^2-\sigma_0^2\|_{L^2}^2
&=
O_p\!\left(
\log(N)\,
N^{-(2\beta-1)/(2\beta+1)}
\right).
\end{align*}
\end{corollary}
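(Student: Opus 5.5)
The plan is to obtain Corollary~\ref{cor:L2Rates} from Theorem~\ref{thm:ConvergenceRate} through a bias--variance split. The weighted-norm bound in the theorem is converted into an $L^2$ bound on the sieve space using reversed norm inequalities whose constants grow with $M$. The truncation level is then re-balanced, since it is now a different $M$ than in the theorem.

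\textbf{Step 1 (re-derive the theorem's bound for general $M$).} Let $b_{1M}$ denote the $M$-term truncation of $b_{10}$, and similarly $b_{2M}$ and $\sigma_M$. I would not use the theorem only at $M\sim N^{1/2\beta}$. Instead I would go back to its proof and extract the generic bound
\[
\normww{\widehat b_{1N}-b_{10}}^2 + \normww{\widehat b_{2N}-b_{20}}^2 + \normwww{\widehat\sigma_N^2-\sigma_0^2}^2 = O_p\!\left(\frac{M\log N}{N} + M^{1-2\beta}\right).
\]
Here the first term is the estimation error over a $(3M)$-dimensional sieve. The second term is the approximation error: by Assumption~\ref{ass:beta}, $\sum_{k>M}k^{-2\beta}\asymp M^{1-2\beta}$. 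Balancing the two terms gives the theorem's choice of $M$, which confirms that this generic form is what its proof delivers.

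\textbf{Step 2 (reverse the norm inequalities on the sieve).} For $f$ in the span of $\varphi_1,\dots,\varphi_M$, I would use the reversed inequalities of Section~\ref{sec:Norms}:
\[
\norm{f}_{L^2}^2\lesssim M\,\normww{f}^2, \qquad \norm{f}_{L^2}^2\lesssim M\log M\,\normwww{f}^2 \ \text{ (or } M\normwww{f}^2\text{)}.
\]
The heuristic is that integrating $\cos(k\pi u)$ over an interval of length $h$ gives roughly $\min(h,1/k)$. Under the $\normww{\cdot}$ weighting $1/h$ this damps frequency $k$ by about $1/k$, and $\normwww{\cdot}$ behaves similarly.

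\textbf{Step 3 (apply the bounds to each estimator).} Write $\widehat b_{1N}-b_{10} = (\widehat b_{1N}-b_{1M}) + (b_{1M}-b_{10})$. The bias term satisfies $\|b_{1M}-b_{10}\|_{L^2}^2 \lesssim M^{1-2\beta}$. The first term lies in the sieve, so Step 2 bounds it by $M$ times its weighted norm. That weighted norm is in turn at most the weighted error from Step 1 plus the weighted bias, and the weighted bias is itself at most $M^{1-2\beta}$. This gives
\[
\|\widehat b_{1N}-b_{10}\|_{L^2}^2 = O_p\!\left(\frac{M^2\log N}{N}+M^{2-2\beta}\right).
\]
Balancing the two terms gives $M\sim N^{1/(2\beta)}$, with rate $N^{-(\beta-1)/\beta}$ up to logs. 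This does not match the stated choice $M\sim N^{1/(2\beta+2)}$ with rate $N^{-(2\beta-1)/(2\beta+2)}$. So the paper's accounting evidently differs, most likely because the variance and bias pieces pick up different powers of $M$. The stated exponents correspond to balancing $M^{2}\cdot\log N\cdot N^{-(2\beta-1)/2\beta}$-type terms against $M^{1-2\beta}$. I would therefore follow the form that Section~\ref{sec:Norms} actually yields and balance accordingly.

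For $\sigma^2$, $\normwww{\cdot}$ is stronger than $\normww{\cdot}$, so the loss factor is smaller by one power of $M$. The stated exponent $(2\beta-1)/(2\beta+1)$, attained at $M\sim N^{1/(2\beta+1)}$, comes from the same balancing with a single factor of $M$ fewer. A further detail is that $\widehat\sigma_N^2=\exp(2\sum_{k\le M}\widehat\sigma_k\varphi_k)$ is not itself in the linear sieve. I would handle this by working with $\log\sigma^2$ and linearizing $\exp$. This is valid because Assumption~\ref{ass:beta} gives uniform boundedness, and consistency gives $\norm{\cdot}_\infty$ control of $\widehat\sigma_N$ on the sieve.

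\textbf{Main obstacle.} The hardest step is Step 2, namely proving sharp reversed inequalities that lose exactly the power of $M$ these rates require. I would attack it by diagonalizing the quadratic forms $\normww{\cdot}^2$ and $\normwww{\cdot}^2$ in the cosine basis, bounding their smallest eigenvalue on the span of $\varphi_1,\dots,\varphi_M$, and controlling the off-diagonal terms through explicit integration of products of $\sin(k\pi h)/k$.
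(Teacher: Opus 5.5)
Your proposal does not prove the corollary as written. Step~3 arrives at $N^{-(\beta-1)/\beta}$ at $M\sim N^{1/(2\beta)}$, and you then defer to ``the paper's accounting''. The first concrete error is the reversed inequality for the drift norm, which is off by a power of $M$. On $\mathrm{span}\{\varphi_1,\dots,\varphi_M\}$, Lemma~\ref{lem:LowerBoundNorm} gives $\norm{f}_{L^2}^2\le c_6^{-1}M^2\normww{f}^2$ and $\norm{f}_{L^2}^2\le c_6^{-1}M\normwww{f}^2$. Lemma~\ref{lem:LowerBoundNormII} gives the same factor $M$ directly for $\sigma^2-\underline\sigma^2$ with both in the exponential sieve, so no linearization is needed. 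Your bound $\norm{f}_{L^2}^2\lesssim M\normww{f}^2$ already fails for $f=\varphi_M$, since $\normww{\varphi_M}^2\asymp M^{-2}\log M$. Your own heuristic shows this once it is carried out. With $(\int_{t_1}^{t_2}\varphi_k)^2\approx\min(h^2,k^{-2})$, the weight $1/h$ gives $\int_0^1 h^{-1}\min(h^2,k^{-2})\,\diff h\asymp k^{-2}\log k$, while the weight $1/h^2$ gives $\asymp k^{-1}$. So the $k^{-1}$ damping belongs to $\normwww{\cdot}$, not to $\normww{\cdot}$. (The paper does not diagonalize the quadratic forms. It uses local averaging: for $h\le 1/(8M)$, averages of $f$ over windows of length $h$ retain half of $\norm{f}_{L^2}$, and integrating the weight over such $h$ gives the factors $M^{-2}$ and $M^{-1}$.)

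The second and more important error is that you route the approximation error through the weighted norm and then multiply it by the $M$-loss. Bounding $\normww{\widehat b_{1N}-b_{1M}}$ by the total error of Step~1 imports the term $M^{1-2\beta}$. After the reversal it becomes $M^{2-2\beta}$, or $M^{3-2\beta}$ with the correct factor $M^2$. At the prescribed $M$ this already exceeds the target, which is $M^{1-2\beta}$ up to logs in both cases.

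The paper instead splits at the truncated truth $\theta_{0N}$ and treats the two pieces differently. The approximation error stays in $L^2$, where it is $\lesssim M^{1-2\beta}$ with no inflation. The reversal is applied only to the sieve-internal difference $\widehat b_{jN}-b_{j0N}$ (resp.\ $\widehat\sigma_N^2-\sigma_{0N}^2$). Its weighted norm is controlled by the sieve-centered rate $O_p(M\log M/N)$ of Lemma~\ref{lem:Conv}, not by the theorem's total error. This gives $M^3\log M/N+M^{1-2\beta}$ for the drifts and $M^2\log M/N+M^{1-2\beta}$ for $\sigma^2$. These balance at $M\sim N^{1/(2\beta+2)}$ and $M\sim N^{1/(2\beta+1)}$ respectively, giving exactly the stated rates.

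Your concern about bias inflation is not baseless, though. In the proof of Lemma~\ref{lem:Conv} the rate is floored by $d^2(\theta_{0N},\theta_0)$, and the crude bound $M^{1-2\beta}$ used there equals $M/N$ only when $M\sim N^{1/2\beta}$. At the smaller $M$ of the corollary this floor must be controlled separately; the crude bound would give $M^{3-2\beta}$ for the drifts. The paper's one-line proof passes over this point.
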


Let $\mathcal{P} = \mathcal{P}(C,\beta)$ denote the set of distributions of $(X_{i1}, X_{i2}, T_{i1}, T_{i2}\colon i \leq N)$ that are compatible with Assumptions~1--3. The following result shows that the polynomial convergence rates of the proposed estimators are optimal in a minimax sense, up to polylog factors.

\begin{theorem}[Minimax optimality in the $L^2$-norm]\label{thm:L2}
    Suppose the assumptions of Theorem~\ref{thm:ConvergenceRate} are satisfied. Then,
    \begin{align*}
        &\liminf_{N \to \infty}\inf_{\widehat{b}_{1N}} \sup_{P\in \mathcal{P}} \prob_P \left(\norm{\widehat{b}_{1N} - b_{10}}_{L^2}^2 \geq (N \log N)^{-(2\beta-1)/(2\beta + 2)}\right) > 0,\\
       &\liminf_{N \to \infty}\inf_{\widehat{b}_{2N}} \sup_{P\in \mathcal{P}} \prob_P \left(\norm{\widehat{b}_{2N} - b_{20}}_{L^2}^2 \geq (N \log N)^{-(2\beta-1)/(2\beta + 2)}\right) > 0,\\
       &\liminf_{N \to \infty}\inf_{\widehat{\sigma}_N^2}\sup_{P\in \mathcal{P}} \prob_P \left(\norm{\widehat{\sigma}_N^2 - \sigma_0^2}_{L^2}^2 \geq N^{-(2\beta-1)/(2\beta + 1)}\right) > 0.
    \end{align*}
\end{theorem}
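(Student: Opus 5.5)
We prove Theorem~\ref{thm:L2} by the standard reduction to multiple hypothesis testing (Fano's lemma, or Assouad's cube). For each coefficient we build a finite family of parameter triples in $\mathcal{P}(C,\beta)$. These triples are well separated in $L^2$, yet their data distributions are close in Kullback--Leibler divergence. The three bounds are handled separately. In each case the other two coefficients are frozen at simple values: $b_1\equiv 0$, $b_2\equiv 0$, $\sigma\equiv 1$, with $v_0=1$.

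The construction uses hypercube perturbations in the cosine basis. For the diffusion, take $\omega\in\{0,1\}^{M}$ and set
\[
\sigma_\omega(t)=\exp\Big(\gamma\sum_{k=M+1}^{2M}\omega_k k^{-\beta}\varphi_k(t)\Big),
\]
with $\gamma\le C$ small, so that Assumption~\ref{ass:beta} holds automatically. For the drifts, perturb $b_1$ or $b_2$ by $\gamma\sum_{k=M+1}^{2M}\omega_k k^{-\beta}\varphi_k$. By Varshamov--Gilbert, pairs at Hamming distance $\gtrsim M$ give
\[
\|\sigma^2_\omega-\sigma^2_{\omega'}\|_{L^2}^2\gtrsim M\cdot M^{-2\beta}=M^{1-2\beta}
\]
to leading order, after linearizing the exponential. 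The same separation holds for $b_1$ and $b_2$.

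Next we bound the divergence. Since the sampling design and the dropout indicator do not depend on the parameters (Proposition~\ref{prop:MAR}), the per-subject KL divergence is at most the complete-data KL. That quantity is the KL of $X(T_1)$ plus the expected KL of the Gaussian transitions. By Proposition~\ref{prop:CondMeanVar} it is controlled by quadratic forms in the differences of transition means and variances, integrated over $(t_1,t_2)$. For $\sigma^2$ this gives roughly $\normwww{\sigma_\omega^2-\sigma_{\omega'}^2}^2$, plus a $v_{i1}$ term of the same type from $t_1=0$. For the drifts it gives roughly $\normww{\cdot}^2$ weighted by $\expv X(t_1)^2$, bounded above.

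The key step, and the one I expect to be the main obstacle, is a reverse bound showing these norms are much smaller than $L^2$ for high-frequency functions. The inner integral $\int_{t_1}^{t_2}\varphi_k$ is $O(1/k)$. Integrating against $(t_2-t_1)^{-2}$ gives $\normwww{\varphi_k}^2\asymp 1/k$: the region $t_2-t_1<1/k$ contributes $O(1/k)$ and the rest contributes $O(k^{-2}\cdot k)$. With $(t_2-t_1)^{-1}$ one gets $\normww{\varphi_k}^2\asymp k^{-2}\log k$. This is where the $\log N$ factor in the drift bounds comes from. Near-orthogonality across $k\in[M,2M]$ must also be checked. The appendix results on norms (Section~\ref{sec:Norms}) can be invoked here. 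The outcome is
\[
\mathrm{KL}(P^N_\omega,P^N_{\omega'})\lesssim N M^{1-2\beta}\cdot M^{-1}\quad\text{for }\sigma^2,
\]
and $\lesssim N M^{1-2\beta}M^{-2}\log M$ for the drifts.

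Finally we choose $M$ to apply Fano. Fano requires $\mathrm{KL}\le \alpha \log(2^{M/8})\asymp \alpha M$. For $\sigma^2$ we take $NM^{-2\beta}\asymp M$, that is $M\asymp N^{1/(2\beta+1)}$, giving separation $M^{1-2\beta}=N^{-(2\beta-1)/(2\beta+1)}$. For the drifts we take $NM^{-2\beta-1}\log M\asymp M$, that is $M\asymp(N\log N)^{1/(2\beta+2)}$ up to constants, giving separation $(N\log N)^{-(2\beta-1)/(2\beta+2)}$. Adjusting $\gamma$ absorbs the constants, and this yields the three $\liminf>0$ statements.
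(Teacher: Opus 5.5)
Your overall route is the paper's: Tsybakov's Theorem~2.5 over a Varshamov--Gilbert packing of coefficient patterns on the band $(m,2m]$ with amplitudes $k^{-\beta}$, the KL bounded by complete-data quadratic forms, the key facts $\normwww{\varphi_k}^2\lesssim k^{-1}$ and $\normww{\varphi_k}^2\lesssim k^{-2}\log k$, and the same balancing of $m$. The drift cases are fine at this level of detail. With the other coefficients frozen, $b_2$ enters linearly, and $b_1$ enters through $\exp(\int_{t_1}^{t_2}b_1)$, so a pointwise mean value bound lands directly in $\normww{\cdot}$.

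The genuine gap is in the diffusion KL bound. You evaluate $\normwww{\cdot}$ only on band-limited functions, which amounts to replacing $\sigma_\omega^2-1$ by $2f_\omega$. But $e^{2f}-1=2f+rf^2$, and $f^2$ carries low frequencies $|k-\ell|$ on which $\normwww{\cdot}$ gives no $1/m$ gain. So this remainder must be shown separately to contribute only $O(Nm^{-2\beta})$ to the KL. With your $\{0,1\}$ weights, all active coefficients share a sign. Hence $f_\omega\gtrsim m^{1-\beta}$ on $[0,c/m]$ for every $\omega$ with a positive fraction of ones. For such $\omega$ the natural bound $R(f)\lesssim\int_0^1 f^4$ (via Lemma~\ref{lem:UpperBoundNorm}) is itself of order $m^{3-4\beta}$. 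The requirement $Nm^{3-4\beta}\lesssim m$ with $N\asymp m^{2\beta+1}$ then holds only for $\beta\ge 3/2$, not on all of $\beta>1$.

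The paper closes this by drawing $\omega\in\{-1,1\}^m$ uniformly at random. This has three effects:
\begin{itemize}
\item $\expv\int_0^1 f^4$ only collects paired indices and is $\lesssim m^{2-4\beta}\le m^{-2\beta}$; this is exactly where $\beta>1$ is used.
\item The cross terms $\langle\varphi_k,\varphi_\ell\rangle$, $k\neq\ell$, vanish in expectation, so near-orthogonality never has to be checked.
\item Markov's inequality leaves at least $2^{m-1}$ sign vectors on which both the quadratic form and the remainder are $\le Cm^{-2\beta}$, and Varshamov--Gilbert is then applied inside that set.
\end{itemize}

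If you prefer a deterministic packing, note that the appendix lemmas you cite point the wrong way. Lemmas~\ref{lem:LowerBoundNorm} and \ref{lem:LowerBoundNormII} are lower bounds on the span of the first $M$ modes, and Lemma~\ref{lem:UpperBoundNorm} has no $1/m$ gain. What you actually need is the band upper bound $\normwww{g}^2\lesssim m^{-1}\norm{g}_{L^2}^2$ and $\normww{g}^2\lesssim m^{-2}\log(m)\norm{g}_{L^2}^2$ for $g\in\mathrm{span}\{\varphi_k\colon m<k\le 2m\}$. It follows by writing $\int_{t_1}^{t_2}g=G(t_2)-G(t_1)$ with $\norm{G}_{L^2}\lesssim m^{-1}\norm{g}_{L^2}$ and splitting at $t_2-t_1=1/m$. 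That settles the quadratic part. You would still need a finer, structure-exploiting estimate of the nonlinear remainder in $\normwww{\cdot}$, which your proposal does not supply.
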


The proof of Theorem~\ref{thm:L2} is given in Section~\ref{sec:MainProofs} of the Appendix. A notable implication of the theorem is that the diffusion coefficient can be estimated at a strictly faster minimax rate than the drift functions. While the $L^2$-error for the drift coefficients has polynomial rate $N^{-(2\beta-1)/(2\beta+2)}$, up to logarithmic factors, the corresponding rate for $\sigma^2$ is $N^{-(2\beta-1)/(2\beta+1)}$. The difference reflects the distinct ways in which drift and diffusion enter the transition distribution of the SDE: the drift affects the conditional mean through integrated coefficient functions, whereas the diffusion is identified through the conditional variance and is therefore subject to a lower degree of ill-posedness. This phenomenon parallels the continuous-observation setting, where the diffusion coefficient is directly identified through the quadratic variation of the process and can typically be estimated more accurately than the drift; see, for example, \citet{kutoyants2004statistical} and \citet{jacod2003limit}. Our minimax result shows that this distinction persists under the sparse sampling scheme considered here and that it is intrinsic to the statistical problem rather than an artifact of the proposed estimation procedure.

\begin{remark}
    Assumption~\ref{ass:X} requires a nondegenerate initial distribution with variance~$v_0>0$. This ensures that the process exhibits nonvanishing variation near the origin, causing the two drift components~$b_1$ and $b_2$ to have the same degree of ill-posedness and leading to the same polynomial $L^2$-rate. The deterministic initial condition $X(0)=0$ results in a different convergence rate for the estimator of $b_1$. The vanishing variation of the SDE as $t\to 0$ leads to a loss of information for the estimation of the multiplicative drift $b_1$ and our methodology yields a convergence rate of~polynomial order $N^{-(2\beta-1)/(2\beta+3)}$. The corresponding rates for $b_2$ and $\sigma^2$ are unaffected by the initial condition.
\end{remark}

\section{Finite sample properties}\label{sec:NumericalIllustration}

We investigate the finite-sample performance of the proposed methodology through two simulation studies. The first compares the likelihood-based estimator with an existing moment-based procedure, while the second considers a data-generating process designed to resemble the tumor-growth application.

\subsection{Simulation design and implementation}\label{sec:Implementation}

Using the statistical programming language R \citep{R} for the implementation, we generate independent copies of the process $X=(X(t)\colon t\in[0,1])$ satisfying
\[
\diff X(t)
=
(b_1(t)X(t)+b_2(t))\,\diff t+ \sigma(t)\,\diff B(t), \quad t\in[0,1];
\qquad X(0)\sim X_0,
\]
where $X_0\sim N(m_0,v_0)$ is independent of the Brownian motion $B$. The nonlinear optimization problem~\eqref{eq:OptProblem} is solved using the Newton-type algorithm \emph{nlm} from the \emph{stats} package. Following Remark~\ref{rem:ChoiceM}, separate truncation parameters are selected for $b_1$, $b_2$, and $\sigma$ using forward AIC selection. Specifically, we minimize
\[
\text{AIC}(M_{b_1},M_{b_2},M_\sigma)
=
\widehat{\mathcal L}_{\text{obs}}
+
2(M_{b_1}+M_{b_2}+M_{\sigma}+2).
\]
Starting from $(M_{b_1},M_{b_2},M_\sigma)=(0,0,0)$, the procedure successively increases the truncation parameter yielding the largest reduction in AIC and terminates when no further improvement is possible or $M_{\max}=10$ is reached. The implementation is available from the third author's GitHub repository\footnote{See \url{https://github.com/maxofn}.}.

 We consider the two parameter configurations specified below. For each configuration, we generate $1{,}000$ samples of size $N\in\{100,200,500\}$, with each trajectory scheduled to be observed at $n\in\{5,10\}$ uniformly distributed time points. Observations are subject to mechanism~\eqref{M}; the choice $\delta_0=\infty$ corresponds to complete sparse observations.

Estimation accuracy for a functional parameter $\theta$ is measured by the root integrated squared error
\[
\text{RISE}(\widehat\theta,\theta)
=
\left(
\int_0^1
(\widehat\theta(t)-\theta(t))^2\,\diff t
\right)^{1/2}.
\]
For the scalar parameters $m_0$ and $v_0$, we use absolute error. Reported results are means and standard deviations across the $1{,}000$ simulation runs.

\subsection{Comparison with nonparametric benchmark}

\textit{Data-generating process.}
The first configuration is
\begin{equation}\tag{A}\label{config:A}
    \begin{aligned}
        b_1(t) &= \frac{1 - e^{10t - 5}}{1 + e^{10t - 5}}, & \qquad \sigma(t) &= \sqrt{\exp((1-t)^2)}, \\  
        b_2(t) &= 0, & (m_0, v_0) &= (2, 0).
    \end{aligned}
\end{equation}
Thus, $X(0)=2$ is deterministic. We consider both complete observations $(\delta_0=\infty)$ and outcome-dependent dropout with $\delta_0=3$. Figure~\ref{fig:Samples} illustrates the resulting sampling schemes for $(N,n)=(500,5)$.

\begin{figure}
    \centering
    \caption*{\textsc{Configuration}~\eqref{config:A}}
    \begin{subfigure}[b]{0.47\textwidth}
        \centering
        \caption*{(\textsc{Complete}: $\delta_0 = \infty$)}
         \vspace{-0.5cm}
    \includegraphics[width=0.99\textwidth]{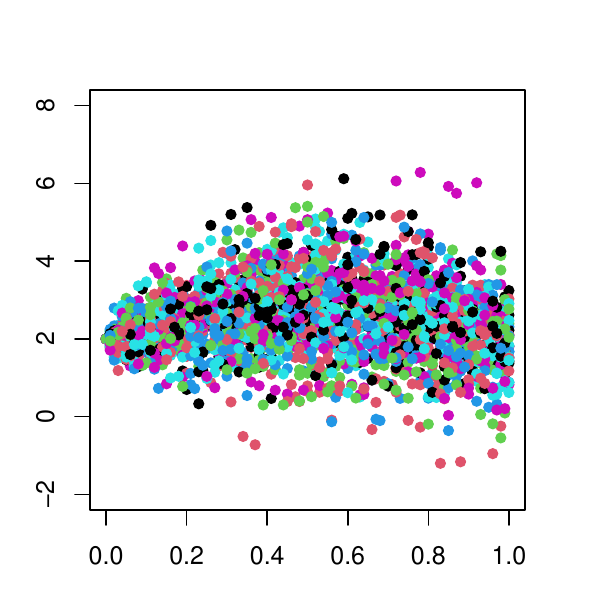}
    \end{subfigure}
    \hspace{0.5cm}
    \begin{subfigure}[b]{0.47\textwidth}
        \centering
        \caption*{(\textsc{MAR}: $\delta_0 = 3$)}
        \vspace{-0.5cm}
    \includegraphics[width=0.99\textwidth]{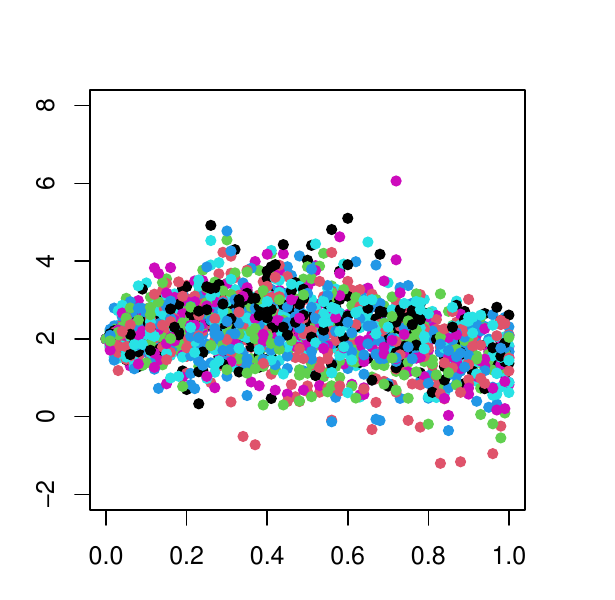}
    \end{subfigure}
    \caption{Samples of $N = 500$ trajectories recorded at $n = 5$ sparse measurement times, with colors indicating individual subjects. The left panel shows complete observations, the right panel illustrates ODD with threshold.}\label{fig:Samples}
\end{figure}

\medskip
\noindent \textit{Benchmark.}
We compare the proposed estimator with the nonparametric method of \citet{mohammadi2024nonparametric}. For the model
$
\diff X(t)=b_1(t)X(t)\,\diff t+\sigma(t)\,\diff B(t),
$
let $\mu(t)=\expv[X(t)]$ and $\nu(t)=\expv[X^2(t)]$. These functions satisfy the ordinary differential equations (ODEs)
$\dot\mu=b_1\mu$ and
$\dot\nu=2b_1\nu+\sigma^2$, hence
$b_1=\dot\mu/{\mu}$ and
$\sigma^2=\dot\nu-2b_1\nu$. We consider their ODE-based version, which first estimates the moments and their derivatives by local polynomial smoothing, and plugs them into the above identities to estimate $b_1$ and $\sigma^2$. Denote the resulting estimators by $\widehat b_{1,\mathrm{MSP}}$ and $\widehat\sigma^2_{\mathrm{MSP}}$. Although $b_2=0$ under configuration~\eqref{config:A}, it is treated as unknown by the proposed likelihood procedure.

\medskip
\noindent\textit{Results.}
Table~\ref{tab:configA} summarizes the results. The estimation errors of the proposed estimators generally decrease with increasing $N$ and $n$, both for complete observations and under MAR dropout. The AIC procedure also tends to select larger truncation levels for $b_1$ and $\sigma$ as $N$ increases, consistent with the increasing sieve dimension suggested by the asymptotic theory.

Under complete observations, the likelihood and benchmark estimators perform similarly for $b_1$, whereas the likelihood estimator yields substantially smaller errors for $\sigma^2$. Under MAR dropout, the advantage of the likelihood approach becomes more pronounced. In particular, the local-polynomial moment estimators exhibit persistent bias because the observed measurements are outcome-dependent. This effect is clearly visible for $\widehat b_{1,\mathrm{MSP}}$ in Figure~\ref{fig:Sim_Results} and is consistent with the discussion in Section~\ref{sec:ConsequencesOfMissingData}. We emphasize that the method of \citet{mohammadi2024nonparametric} was not designed for data subject to MAR dropout.

\begin{table}
 \centering 
\resizebox{\textwidth}{!}{%
 \begin{tabular}{|c|cc|ccc|ccccc|} 
 \hline
 \multicolumn{11}{|c|}{\textsc{Configuration}~\eqref{config:A}} \\
 \hline 
 Missingness & $n$ & $N$ & $M_{b_1}$ & $M_{b_2}$ & 
$M_\sigma$ & $\widehat{b}_{1N}$ & $\widehat{b}_{2N}$ & 
$\widehat{\sigma}^2_N$ & $\widehat{b}_{1,\text{MSP}}$ & 
$\widehat{\sigma}_{\text{MSP}}^2$ \\ 
 \hline 

 & 5  & 100 & 1.67   & 0.19   & 
2.76   & 0.32   & 0.11   & 0.31   & 0.25   & 0.88   
 \\ 

 &    &     & (1.08) & (0.53) & 
(0.76) & (0.28) & (0.31) & (0.22) & (0.10) & (0.28) 
\\ 

               &    & 200 & 2.05   & 0.22   & 
2.94   & 0.19   & 0.09   & 0.20   & 0.21   & 0.75   
 \\ 

               &    &     & (0.85) & (0.54) & 
(0.76) & (0.19) & (0.22) & (0.14) & (0.07) & (0.21) 
\\ 

               &    & 500 & 2.31   & 0.20   & 
3.25   & 0.12   & 0.06   & 0.12   & 0.16   & 0.61   
 \\ 

$\delta_0 = \infty$                 &    &     & (0.81) & (0.53) & 
(0.73) & (0.06) & (0.16) & (0.05) & (0.05) & (0.15) 
\\ 

(\textsc{Complete})                  & 10 & 100 & 2.25   & 0.17   & 
2.82   & 0.15   & 0.10   & 0.16   & 0.21   & 0.77   
 \\ 

               &    &     & (0.70) & (0.45) & 
(0.78) & (0.09) & (0.28) & (0.05) & (0.08) & (0.23) 
\\ 

               &    & 200 & 2.39   & 0.22   & 
3.16   & 0.13   & 0.09   & 0.13   & 0.17   & 0.65   
 \\ 

               &    &     & (0.84) & (0.54) & 
(0.81) & (0.06) & (0.21) & (0.04) & (0.06) & (0.17) 
\\ 

               &    & 500 & 2.54   & 0.20   & 
3.67   & 0.10   & 0.05   & 0.09   & 0.14   & 0.53   
 \\ 

               &    &     & (0.97) & (0.50) & 
(0.86) & (0.03) & (0.13) & (0.02) & (0.04) & (0.13) 
\\ 
 \hline 

     & 5  & 100 & 1.43   & 0.70   & 
2.48   & 0.45   & 0.46   & 0.30   & 0.53   & 0.77   
 \\ 

               &    &     & (1.15) & (1.06) & 
(0.67) & (0.30) & (0.71) & (0.15) & (0.15) & (0.25) 
\\ 

               &    & 200 & 1.95   & 0.71   & 
2.68   & 0.28   & 0.35   & 0.20   & 0.49   & 0.66   
 \\ 

               &    &     & (0.96) & (1.06) & 
(0.75) & (0.23) & (0.56) & (0.09) & (0.10) & (0.17) 
\\ 

               &    & 500 & 2.27   & 0.58   & 
3.08   & 0.17   & 0.17   & 0.14   & 0.46   & 0.55   
 \\ 

$\delta_0 = 3$                     &    &     & (0.85) & (1.02) & 
(0.79) & (0.13) & (0.35) & (0.04) & (0.06) & (0.13) 
\\ 

(\textsc{MAR})                     & 10 & 100 & 1.72   & 0.24   & 
2.75   & 0.35   & 0.17   & 0.22   & 0.60   & 0.71   
 \\ 

               &    &     & (1.08) & (0.64) & 
(0.75) & (0.29) & (0.46) & (0.09) & (0.11) & (0.20) 
\\ 

               &    & 200 & 2.13   & 0.20   & 
2.95   & 0.21   & 0.11   & 0.15   & 0.57   & 0.63   
 \\ 

               &    &     & (0.90) & (0.51) & 
(0.75) & (0.19) & (0.29) & (0.06) & (0.07) & (0.15) 
\\ 

               &    & 500 & 2.41   & 0.22   & 
3.55   & 0.13   & 0.08   & 0.10   & 0.55   & 0.54   
 \\ 

               &    &     & (0.87) & (0.56) & 
(0.84) & (0.07) & (0.20) & (0.02) & (0.05) & (0.11) 
\\ 
 \hline 
 \end{tabular}
}
\caption{Results from $1{,}000$ simulation runs. The first three columns give the average truncation levels selected by the forward AIC procedure, while the remaining columns show the mean RISE for the respective SDE parameter estimators. Standard deviations are reported in parentheses.} 
 \label{tab:configA} 
 \end{table}

\begin{figure}[p]
    \centering
    \caption*{\textsc{Configuration~\eqref{config:A}}}
    \begin{subfigure}[b]{\textwidth}
        \centering
        \caption*{(\textsc{Complete}: $\delta_0 = \infty$)}
    \includegraphics[width=0.9\textwidth]{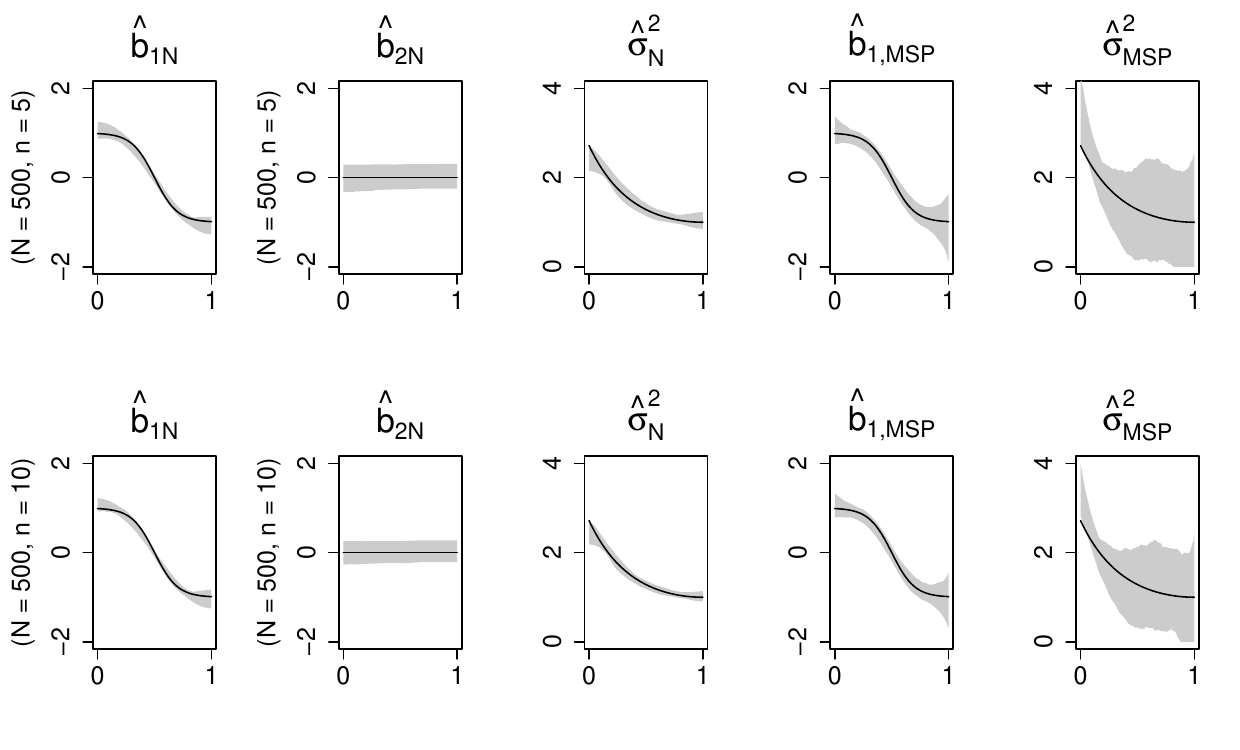}
    \end{subfigure}
    \vspace{1em}
    \begin{subfigure}[b]{\textwidth}
        \centering
        \caption*{(\textsc{MAR}: $\delta_0 = 3$)}
        \includegraphics[width=0.9\textwidth]{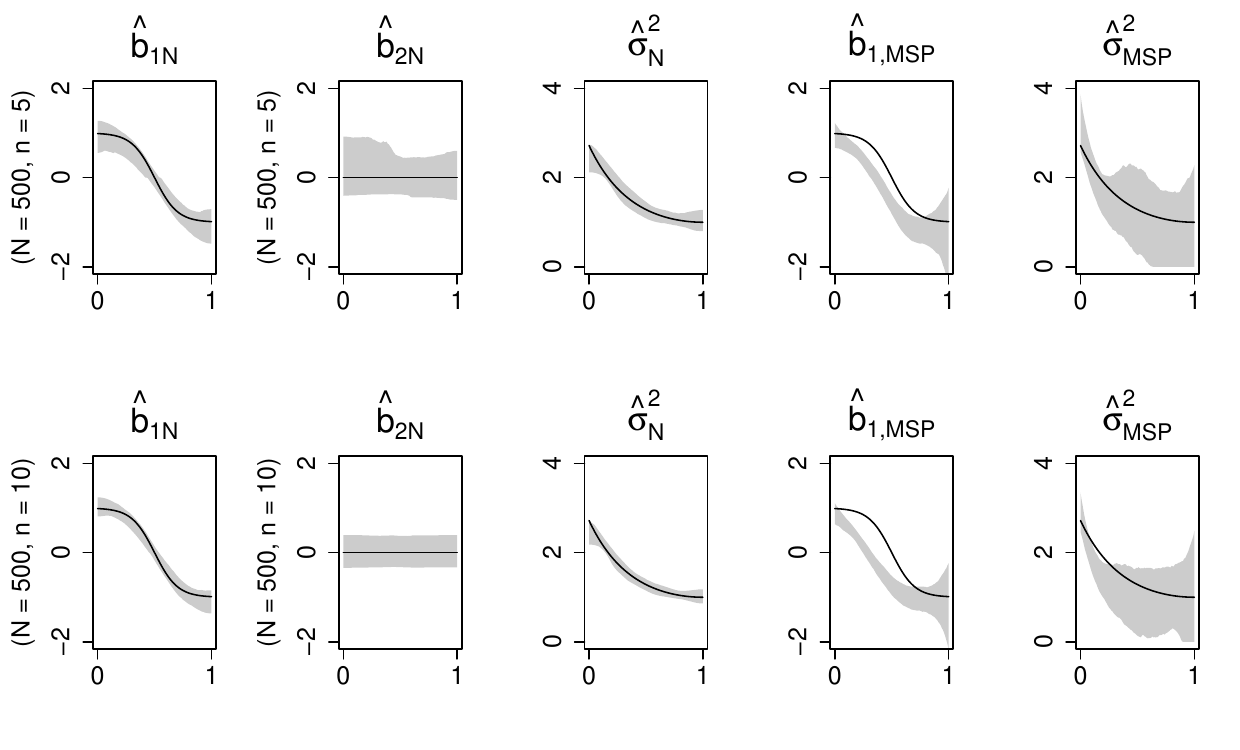}
    \end{subfigure}
    \caption{Results from $1{,}000$ simulation runs with $N=500$ and $n\in\{5,10\}$ measurement times. Gray bands represent 90\% pointwise confidence bands based on the estimated functions. The top row corresponds to complete sparse observations, while the bottom row corresponds to ODD under MAR.}\label{fig:Sim_Results}
\end{figure}

\subsection{Tumor-growth configuration}

\textit{Data-generating process.}
The second configuration is chosen to resemble the log-transformed tumor-growth data analyzed in Section~\ref{sec:RealDataIllustration}. We use mechanism~\eqref{M} with threshold $\delta_0=\log(2)$ and set
\begin{equation}\tag{B}\label{config:B}
    \begin{aligned}
        b_1(t) &= -5.5 \,\varphi_1(t), & \sigma(t) &= \exp(0.3 \,\varphi_1(t) + 0.4 \,\varphi_2(t)), \\
        b_2(t) &= -0.6 \,\varphi_1(t) - 3.5 \,\varphi_2(t), \qquad & (m_0, v_0) &= (-8.8, 2.9),
    \end{aligned}
\end{equation}
where $\varphi_1(t) = 1$ and $\varphi_2(t) = \sqrt{2} \cos(\pi t)$ for $t \in [0,1]$. In contrast to configuration~\eqref{config:A}, the initial value is random.

\medskip
\noindent
\textit{Results.}
Table~\ref{tab:configB} shows that estimation accuracy generally improves as $N$ or $n$ increases. The true functional parameters have finite basis representations with
$
(M_{b_1},M_{b_2},M_\sigma)=(1,2,2).
$
The AIC procedure selects truncation levels close to these values on average and remains stable across the considered sample sizes and sampling frequencies. Overall, the results indicate that the proposed estimator performs well under a data-generating mechanism resembling the tumor-growth application, despite outcome-dependent dropout.

\begin{table}
 \centering 
 \begin{tabular}{|c|cc|ccc|ccccc|} 
 \hline
 \multicolumn{11}{|c|}{\textsc{Configuration}~\eqref{config:B}} \\
 \hline 
 Missingness & $n$ & $N$ & $M_{b_1}$ & $M_{b_2}$ & $M_\sigma$ & $\widehat{b}_{1N}$ & $\widehat{b}_{2N}$ & $\widehat{\sigma}^2_N$ & $\widehat{m}_0$ & $\widehat{v}_0$ \\ 
 \hline 

 & 5  & 100 & 1.15   & 2.14   & 2.20   & 0.37   & 0.65   & 0.21   & 0.19   & 0.44   \\ 
 &    &     & (0.43) & (0.43) & (0.49) & (0.39) & (0.53) & (0.14) & (0.14) & (0.33) \\ 

 &    & 200 & 1.15   & 2.13   & 2.20   & 0.28   & 0.47   & 0.16   & 0.13   & 0.31   \\ 
 &    &     & (0.43) & (0.37) & (0.52) & (0.26) & (0.37) & (0.10) & (0.10) & (0.25) \\ 

 &    & 500 & 1.16   & 2.15   & 2.17   & 0.21   & 0.31   & 0.11   & 0.08   & 0.20   \\ 
$\delta_0 = \log(2)$ &    &     & (0.44) & (0.40) & (0.45) & (0.17) & (0.22) & (0.07) & (0.06) & (0.16) \\ 

(\textsc{MAR}) & 10 & 100 & 1.14   & 2.15   & 2.20   & 0.29   & 0.50   & 0.14   & 0.15   & 0.39   \\ 
 &    &     & (0.40) & (0.43) & (0.50) & (0.28) & (0.39) & (0.08) & (0.11) & (0.30) \\ 

 &    & 200 & 1.14   & 2.18   & 2.14   & 0.23   & 0.38   & 0.11   & 0.11   & 0.27   \\ 
 &    &     & (0.39) & (0.48) & (0.38) & (0.20) & (0.30) & (0.06) & (0.08) & (0.20) \\ 

 &    & 500 & 1.14   & 2.15   & 2.18   & 0.18   & 0.24   & 0.09   & 0.07   & 0.17   \\ 
 &    &     & (0.39) & (0.44) & (0.47) & (0.13) & (0.17) & (0.04) & (0.05) & (0.13) \\ 
 \hline 
 \end{tabular} 
 \caption{Results from $1{,}000$ simulation runs. Reported quantities are defined as in Table~\ref{tab:configA}.}\label{tab:configB}
\end{table}

\section{Tumor growth data}
\label{sec:RealDataIllustration}

We illustrate the proposed Gauss--Markov framework using the tumor-growth data introduced in Section~\ref{sec:intro}. The analysis is based on the SDE parameters estimated by the likelihood procedure of Section~\ref{sec:Estimation}.

\begin{example}[Tumor growth]
Let $Y$ denote the tumor-volume process and suppose that
\[
X(t)=\log Y(t), \qquad t\in[0,T],
\]
satisfies SDE~\eqref{eq:SDE}. By Itô's formula \citep[Theorem~4.1.2]{oksendal2007stochastic},
\begin{equation}\label{eq:SDE_Gompertz}
\diff Y(t)
=
Y(t)
\left(
b_1(t)\log Y(t)+b_2(t)+\frac{\sigma^2(t)}{2}
\right)\diff t
+
\sigma(t)Y(t)\,\diff B(t), \qquad t\in[0,T].
\end{equation}
Thus, on the original scale, the model can be viewed as a time-inhomogeneous stochastic extension of the Gompertz model; see \citet{vaghi2020population} for a homogeneous random-effects formulation. Unlike an ODE-based random-effects model, which generates differentiable trajectories, the SDE formulation allows for non-differentiable sample paths.
\end{example}

Using the forward AIC procedure of Section~\ref{sec:Implementation}, the truncation levels
$
(M_{b_1},M_{b_2},M_\sigma)=(1,2,2)
$
are obtained. The estimated coefficient functions are shown in Figure~\ref{fig:Tumor_Estimates}, with estimated initial parameters
$
(\widehat m_0,\widehat v_0)=(-8.80,2.91).
$
For interpretation, write the drift as
\[
b_1(t)
\left(
X(t)+\frac{b_2(t)}{b_1(t)}
\right).
\]
When $b_1(t)<0$, the process exhibits mean reversion toward the time-varying level $-b_2(t)/b_1(t)$, at a rate determined by $|b_1(t)|$.

In the following, we use the fitted model for three inferential tasks: constructing prediction bands, estimating first-passage times, and predicting tumor age.

\begin{figure}
    \centering
    \includegraphics[width=\linewidth]{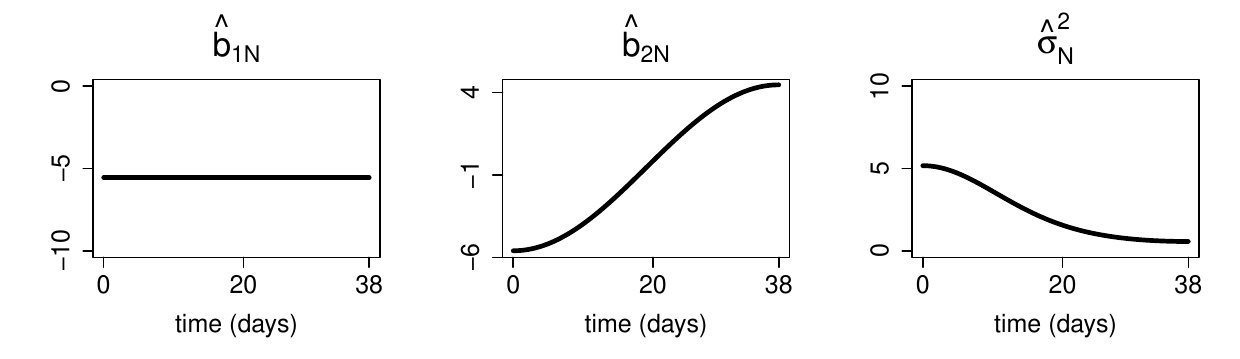}
    \caption{Estimates of the model parameters for the tumor growth data.}
    \label{fig:Tumor_Estimates}
\end{figure}

\subsection{Prediction Bands}

Proposition~\ref{prop:CondMeanVar} directly yields pointwise prediction bands. Let $q(\alpha)$ denote the $\alpha$-quantile of the standard normal distribution.

\begin{proposition}[Pointwise prediction bands]\label{prop:PPB}
The pointwise $\alpha$-quantile of $X$ is
\[
Q_{X(t)}(\alpha)
=
\mu(t)
+
q(\alpha)\Phi(0,t)
\left(
v_0+\int_0^t\Phi(0,u)^{-2}\sigma^2(u)\,\diff u
\right)^{1/2},
\qquad t\in[0,T].
\]
\end{proposition}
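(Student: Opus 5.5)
The plan is to show that $X(t)$ is Gaussian for each fixed $t$. Its quantile then equals its mean plus $q(\alpha)$ times its standard deviation, so it remains to identify the variance with the expression in the statement.

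First, by the variation-of-constants formula \eqref{eq:SolutionLSDE} with $s=0$, the variable $X(t)$ is an affine function of $X_0\sim N(m_0,v_0)$ plus a Wiener integral $\int_0^t\Phi(u,t)\sigma(u)\,\diff B(u)$ with a deterministic integrand. Such an integral is Gaussian, and it is independent of $X_0$, so $X(t)$ is Gaussian. By Proposition~\ref{prop:CondMeanVar}, its mean is $\mu(t)$.

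Second, I compute the variance. I take $s=t$ in the covariance formula of Proposition~\ref{prop:CondMeanVar}:
\[
\var(X(t)) = \Phi(0,t)^2 v_0 + \int_0^t \Phi(u,t)^2\sigma^2(u)\,\diff u .
\]
Definition \eqref{eq:FundamentalSolution} gives the multiplicativity $\Phi(u,t)=\Phi(0,t)\Phi(0,u)^{-1}$, since $\int_u^t b_1=\int_0^t b_1-\int_0^u b_1$. Factoring out $\Phi(0,t)^2$ then yields
\[
\var(X(t))=\Phi(0,t)^2\Bigl(v_0+\int_0^t\Phi(0,u)^{-2}\sigma^2(u)\,\diff u\Bigr),
\]
which is $h_0(t)^2h_1(t)$ in the notation of Proposition~\ref{prop:Doob}. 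Equivalently, one could read this directly off the Doob representation \eqref{eq:Doob}: $\var(X(t))=h_0(t)^2\var(B(h_1(t)))=h_0(t)^2h_1(t)$.

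Finally, write $X(t)=\mu(t)+\sqrt{\var(X(t))}\,Z$ with $Z\sim N(0,1)$. Because $\Phi(0,t)>0$, the map $z\mapsto \mu(t)+\Phi(0,t)h_1(t)^{1/2}z$ is strictly increasing when $h_1(t)>0$, and quantiles are preserved under increasing affine maps. This gives $Q_{X(t)}(\alpha)=\mu(t)+q(\alpha)\Phi(0,t)h_1(t)^{1/2}$, as claimed.

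The only step needing care is the degenerate case $h_1(t)=0$, which can occur only when $v_0=0$ and $t=0$ (or $\sigma$ vanishes on $[0,t]$). In that case $X(t)=\mu(t)$ almost surely, and the formula still returns $\mu(t)$, so the statement holds trivially. Everything else is routine.
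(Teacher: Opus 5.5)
Your proof is correct and follows the paper's route: the paper simply says the result follows immediately from Proposition~\ref{prop:CondMeanVar}, and you fill in the implicit steps. These are the Gaussianity of $X(t)$, the variance at $s=t$ rewritten via $\Phi(u,t)=\Phi(0,t)\Phi(0,u)^{-1}$, the Gaussian quantile under an increasing affine map, and the harmless degenerate case $h_1(t)=0$.
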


Simultaneous prediction bands follow from the Doob representation
\[
X(t)=\mu(t)+h_0(t)B(h_1(t)), \qquad t\in[0,T],
\]
with $h_0$ and $h_1$ given in Proposition~\ref{prop:Doob}. Following \citet{delong1981crossing}, let $c_\alpha$ be chosen so that the corresponding Brownian-motion boundary-crossing probability equals $1-\alpha$. This gives the following simultaneous band.

\begin{proposition}[Simultaneous prediction bands]\label{prop:SPB}
Let $v_0>0$ and $\alpha\in(0,1)$. Then
\[
\prob\left(
|X(t)-\mu(t)|
\le
c_\alpha h_0(t)\sqrt{h_1(t)}
\text{ for all }t\in[0,T]
\right)
=
1-\alpha.
\]
\end{proposition}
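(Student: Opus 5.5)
The plan is to reduce the statement to a boundary-crossing problem for a standard Brownian motion on a compact time interval, using the Doob representation of Proposition~\ref{prop:Doob}. By \eqref{eq:Doob}, $X(t)-\mu(t)=h_0(t)B(h_1(t))$ for all $t\in[0,T]$, where $h_0(t)=\Phi(0,t)>0$. The event in question is therefore
\[
\bigl\{\abs{B(h_1(t))}\le c_\alpha\sqrt{h_1(t)}\ \text{for all } t\in[0,T]\bigr\}.
\]
Since $\sigma$ is continuous, $h_1$ is continuous. If $\sigma^2$ vanishes only on a null set, $h_1$ is also strictly increasing. In either case $h_1$ maps $[0,T]$ onto the interval $[v_0,h_1(T)]$, and $v_0>0$. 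Reparametrizing by $s=h_1(t)$, the event becomes $\{\abs{B(s)}\le c_\alpha\sqrt{s}\ \text{for all } s\in[v_0,h_1(T)]\}$. This reparametrization uses only continuity and monotonicity of $h_1$, so flat pieces of $h_1$ cause no problem.

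Next I will normalize the interval by Brownian scaling. Set $W(r)=v_0^{-1/2}B(v_0 r)$, which is again a standard Brownian motion, and write $s=v_0r$. The condition $\abs{B(s)}\le c_\alpha\sqrt{s}$ is invariant under this scaling, so the event becomes
\[
\bigl\{\abs{W(r)}\le c_\alpha\sqrt{r}\ \text{for all } r\in[1,\kappa]\bigr\},
\qquad \kappa=h_1(T)/v_0.
\]
Its probability therefore depends only on $c_\alpha$ and $\kappa$. This is the square-root boundary-crossing probability studied by \citet{delong1981crossing}. One can also apply the time change $W(e^{u})e^{-u/2}$, which turns the event into an Ornstein--Uhlenbeck process staying in $[-c_\alpha,c_\alpha]$ over $u\in[0,\log\kappa]$, with stationary $N(0,1)$ start.

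The remaining step, and the one requiring care, is to show that $c_\alpha$ exists, i.e.\ that $g(c)=\prob(\abs{W(r)}\le c\sqrt{r}\ \forall r\in[1,\kappa])$ attains the value $1-\alpha$. I will show that $g$ is continuous and nondecreasing in $c$, with $g(c)\to0$ as $c\downarrow0$ and $g(c)\to1$ as $c\to\infty$; the intermediate value theorem then gives $c_\alpha$.

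The limit as $c\downarrow0$ is immediate, since $g(c)\le\prob(\abs{W(1)}\le c)\to0$. The limit as $c\to\infty$ follows because the supremum of $\abs{W}$ over $[1,\kappa]$ is almost surely finite. For continuity I will argue that for each $c$ the event where $\sup_{r\in[1,\kappa]}\abs{W(r)}/\sqrt{r}$ equals $c$ exactly has probability zero. This holds because the supremum of a continuous nondegenerate Gaussian process has an atomless distribution, by Tsirelson's theorem or directly from the OU representation.

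This existence argument is the main obstacle. The problem is well posed only because $v_0>0$: with $v_0=0$ the interval would start at $0$, and by the law of the iterated logarithm near zero the boundary $c\sqrt{r}$ would be crossed almost surely for every $c$. Once $c_\alpha$ is defined this way, the display in Proposition~\ref{prop:SPB} follows by undoing the two reparametrizations, since $c_\alpha h_0(t)\sqrt{h_1(t)}$ is exactly the transformed boundary.
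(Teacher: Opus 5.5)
Your proposal is correct and follows the paper's proof. Both arguments use Doob's representation, divide by $h_0>0$, time-change via $s=h_1(t)$ onto an interval bounded away from $0$, and rescale by Brownian scaling before invoking DeLong's square-root boundary; the paper normalizes to $[\varepsilon,1]$ with $\varepsilon=v_0/h_1(T)$, you to $[1,\kappa]$ with $\kappa=h_1(T)/v_0$, which is equivalent. Your extra argument that $c_\alpha$ exists, via the atomless law of $\sup_{r\in[1,\kappa]}\abs{W(r)}/\sqrt{r}$ and the intermediate value theorem, makes explicit a step the paper leaves to DeLong's method.
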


The construction is computationally inexpensive and requires no simulation. Figure~\ref{fig:tumor_bands_fpt} (left) shows the estimated pointwise median together with 75\% pointwise and simultaneous prediction bands.

\begin{figure}
    \centering
    \includegraphics[width=0.95\linewidth]{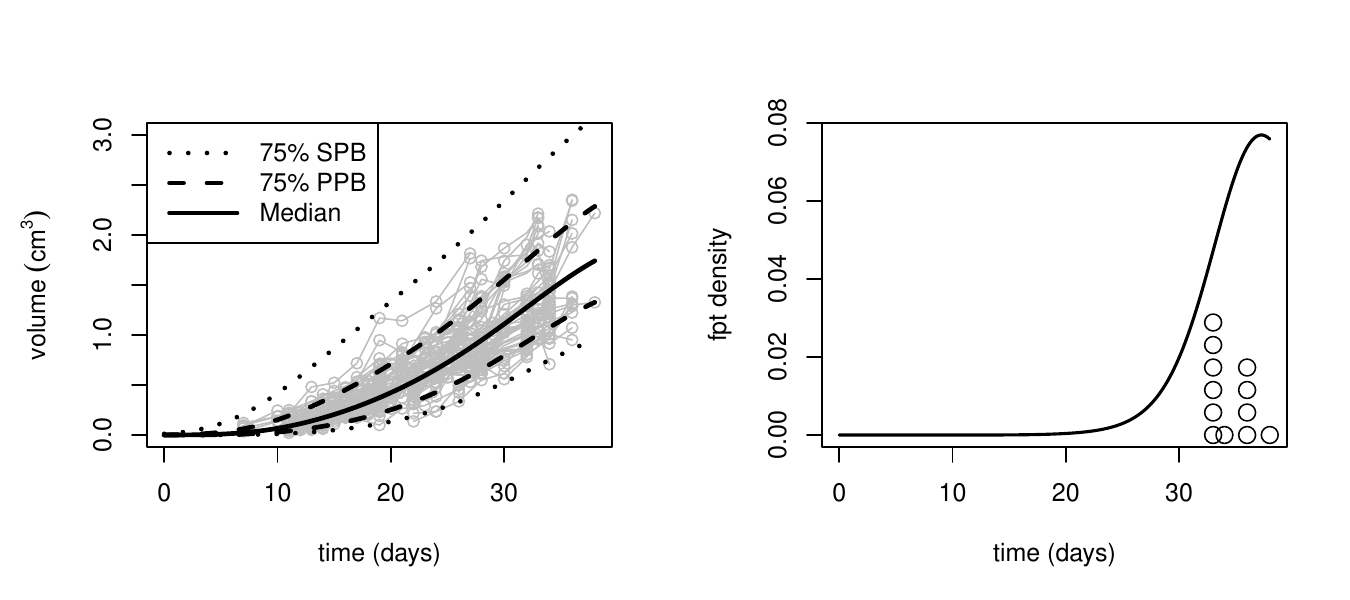}
    \caption{Left: Tumor growth trajectories for $N = 66$ mice, together with the pointwise median (solid), 75\% pointwise prediction band (dashed), and 75\% simultaneous prediction band (dotted). Right: Estimated first-passage time density $\widehat{g}_\delta$ for the threshold $\delta = 2\,\text{cm}^3$; points indicate observed threshold exceedance times.}
    \label{fig:tumor_bands_fpt}
\end{figure}

\subsection{First-passage time}

For a threshold $\delta$ and initial time $t_0<T$ such that $X(t_0)=x_0<\delta$, define
\[
\tau_\delta = \inf\{t\in[t_0,T]\colon X(t)=\delta\},
\]
with $\tau_\delta=\infty$ if the threshold is not reached by time $T$. Its (possibly defective) density is
\[
g_\delta(t\vert x_0,t_0)
=\frac{\diff}{\diff t}
\prob(\tau_\delta\le t\vert X(t_0)=x_0).
\]

The density satisfies the Volterra equation of the second kind
\begin{equation}\label{eq:VolterraIntegralEquation}
g_\delta(t\vert x_0,t_0)
=
-2H(\delta,t\vert x_0,t_0)
+
2\int_{t_0}^t
g_\delta(s\vert x_0,t_0)
H(\delta,t\vert\delta,s)\,\diff s,
\end{equation}
where
$H(x_1,t\vert x_0,t_0)=\frac{\diff}{\diff t}
\prob(X(t)\le x_1\vert X(t_0)=x_0)$;
see \citet{buonocore1987integral,gutierrez1997first}. We approximate $g_\delta$ using the iterative procedure of \citet{nardo2001computational}.
For the tumor data, we set $\delta=2\,\mathrm{cm}^3$ and $(x_0,t_0)=(\widehat m_0,0)$. The estimated density is shown in the right panel of Figure~\ref{fig:tumor_bands_fpt}. The estimated probability of reaching the threshold within the study period is
\[
\widehat{\prob}(\tau_\delta<T\vert X(0)=x_0)=\int_{t_0}^T\widehat{g}_\delta(t\vert x_0,t_0)\,dt=0.49,
\]
and the estimated conditional mean passage time, given that the threshold is reached, is 
\[
\widehat{\expv}[\tau_\delta\vert \tau_\delta < T, X(t_0) = x_0] = \int_{t_0}^T t \, \widehat{g}_\delta(t\vert x_0, t_0) \, \diff t 
=34.01\text{ days}.
\]

\subsection{Tumor age prediction}

The fitted nonstationary model can also be used to estimate an unknown temporal shift. Suppose that
\[
W(t)=X(t+\lambda), \qquad t\in[0,T-\lambda],
\]
is observed at $0\le t_1<\cdots<t_n\le T-\lambda$, with $w_j=x(t_j+\lambda)$. We estimate the shift $\lambda$ by
\begin{equation}\label{eq:TimeWarpingDensity}
\widehat\lambda
=
\argmax_{\lambda\in[0,T-t_n]}
f_{X(t_1+\lambda),\ldots,X(t_n+\lambda)}
(w_1,\ldots,w_n).
\end{equation}

We apply this procedure to tumor-age prediction; see \citet{vaghi2020population} for an alternative approach. The model is fitted to a training sample of $N_{\mathrm{train}}=44$ mice. Of the remaining subjects, five are excluded because they have no measurements beyond three weeks, leaving $N_{\mathrm{test}}=17$. For each test subject, measurements from the first three weeks after tumor injection are withheld, and $\widehat\lambda$ is used to estimate tumor age from the remaining observations. Figure~\ref{fig:tumor_age} summarizes the resulting prediction errors.

\begin{figure}
    \centering
    \includegraphics[width=\linewidth]{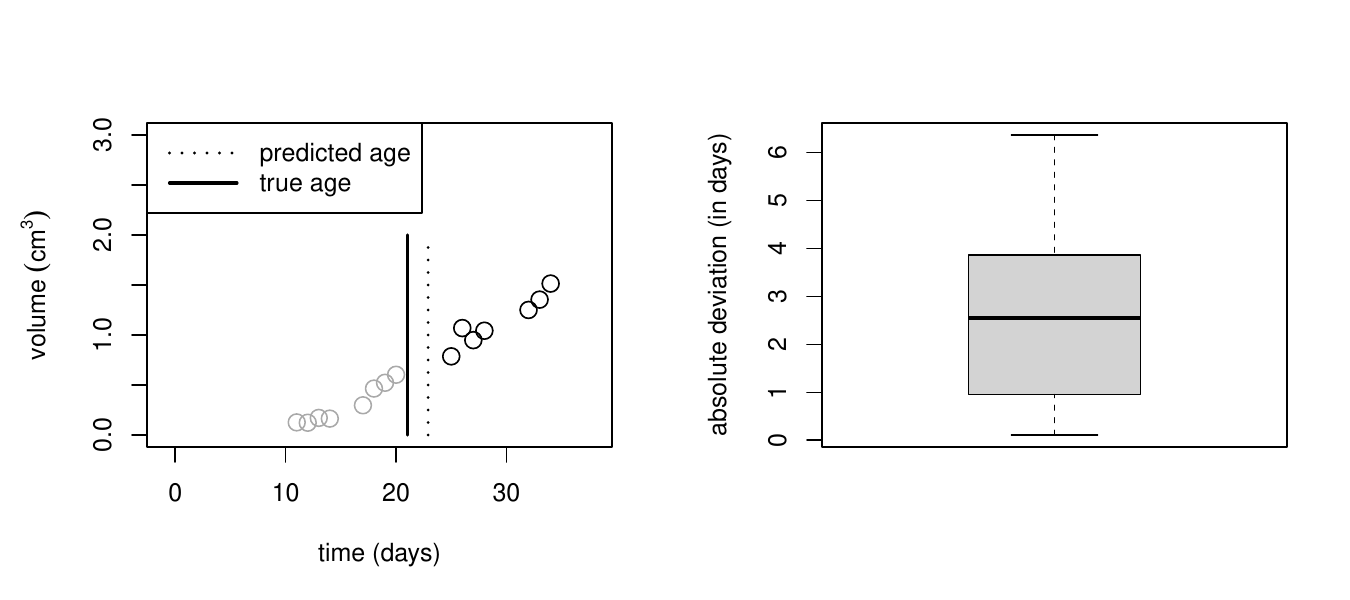}
    \caption{Left: Tumor volume measurements for a representative subject, with withheld observations shown in gray and the estimated tumor age of $22.90$ days indicated by the dotted vertical line. Right: Distribution of absolute tumor-age prediction errors across the test subjects.}
    \label{fig:tumor_age}
\end{figure}

\section{Conclusion}\label{sec:Conclusions}

We developed a likelihood-based framework for distributional modeling of functional data using Gauss--Markov processes generated by linear stochastic differential equations. The proposed approach directly estimates the drift and diffusion functions through sieve maximum likelihood and accommodates sparse, irregularly observed trajectories. An important feature is its ability to handle outcome-dependent dropout: although such sampling violates the commonly imposed MCAR assumption, we showed that the resulting missingness mechanism is MAR and can therefore be accommodated by likelihood-based inference.

We established convergence rates for the proposed estimators and showed that the drift and diffusion coefficients exhibit different degrees of statistical difficulty, with the diffusion coefficient admitting a faster rate. Simulation experiments supported the theoretical findings and demonstrated the advantages of likelihood-based estimation over moment-based methods in the presence of outcome-dependent dropout. Finally, the analysis of tumor-growth data illustrated how the fitted distributional model can be used for inference beyond the estimation of mean and covariance functions, including the construction of prediction bands, first-passage time analysis, and tumor-age prediction.

The proposed framework also provides a natural approach to reconstruct partially observed functional data. Related methods include the ridge-regularized functional completion approach of \citet{kraus2015components}, the linear reconstruction methodology of \citet{kneip2020on}, and the discrete Markov-chain approach of \citet{delaigle2016approximating}. In our setting, the Gauss--Markov structure directly yields the conditional distribution of an unobserved segment given the available measurements, thereby providing both a reconstruction of the missing part of a trajectory and a natural quantification of its uncertainty.

Several other extensions are of interest but beyond the scope of the present paper. These include more general nonlinear or non-Gaussian SDE models, alternative observation and dropout mechanisms, and a more detailed investigation of the optimal convergence rates for drift and diffusion estimation under sparse sampling.

\section*{Funding}

 Alexander Aue's research was partially funded by National Science Foundation grant DMS-2515821. Siegfried H\"or\-mann's research was funded in whole or in part by the Austrian Science Fund (FWF) [\href{https://doi.org/10.55776/P35520}{10.55776/P35520}]. For open access purposes, the authors have applied a CC BY public copyright license to any author-accepted manuscript version arising from this submission. Maximilian Ofner gratefully acknowledges support from the Austrian Marshall Plan Foundation.

 \section*{Acknowledgments}

The computational results were obtained using the Austrian Scientific Computing (ASC) infrastructure. The authors thank Victor Panaretos and Leonardo Santoro for sharing implementation details of the method proposed by \cite{mohammadi2024nonparametric}.

\putbib[ref]
\end{bibunit}

\begin{bibunit}


\clearpage

\appendix

\section{Moment and likelihood estimation under MAR}
\label{app:A}

The examples below demonstrate a MAR setting that yields inconsistent moment estimates, but consistent likelihood estimates.

\begin{example}[Moment estimation under MAR]\label{ex:MARMoment}
To illustrate the effect of outcome-dependent dropout, consider the simplest possible setting with two fixed observation times $0\le s<t\le T$.
The information contained in the process $X$ reduces to the bivariate Gaussian vector
\[
(X_1,X_2)=(X(s),X(t)).
\]
Let $\{X_2\textup{ obs}\}$ denote the event that the second observation is available. If the second observation is missing, only $X_1$ is observed. Proposition~\ref{prop:MAR} therefore reduces to the familiar MAR condition
\[
\prob(X_2\textup{ obs}\vert X_1,X_2)
=
\prob(X_2\textup{ obs}\vert X_1).
\]
Suppose the objective is to estimate $\mu_2=\expv[X_2]$. A natural estimator is the sample mean computed from the available observations,
\[
\widehat\mu_2
=
\frac{\sum_{i=1}^N X_{i2} \ind\{\textup{$X_{i2}$ obs}\}}{\sum_{i=1}^N \ind\{\textup{$X_{i2}$ obs}\}}.
\]
By the law of large numbers,
\[
\widehat\mu_2
\;\xrightarrow{p}\;
\expv[X_2\vert X_2\textup{ obs}],
\]
which generally differs from the target quantity $\mu_2$. Thus, under MAR, the naive moment estimator is generally inconsistent.
\end{example}

\begin{example}[Likelihood estimation under MAR]\label{ex:MARLikelihood}
Consider the setting of Example~\ref{ex:MARMoment}. By \eqref{eq:SolutionLSDE},
\[
X_{i2}=\beta_0+\beta_1X_{i1}+\varepsilon_i,
\qquad i=1,\ldots,N,
\]
where $\beta_0 =\int_s^t\Phi(u,t)b_2(u)\,du$, $\beta_1=\Phi(s,t)$, and $\varepsilon_i=\int_s^t\Phi(u,t)\sigma(u)\,dB_i(u)$. The errors are independent of $X_{i1}$ and satisfy
\[
\expv[\varepsilon_i\vert X_{i1}]=0.
\]
Suppose that $(X_{i1},X_{i2})$ is observed only when the second measurement is available. \cite{anderson1957maximum} showed that the maximum likelihood estimator of $\mu_2=\expv[X_2]$ is
\[
\widehat\mu_2 =
\overline X_{2,\mathrm{obs}}
+
\widehat{\beta}_1\left(
\overline X_1-
\overline X_{1,\mathrm{obs}}
\right),
\]
where
\begin{align*}
\overline X_{1,\mathrm{obs}}
=
\frac{\sum_{i=1}^N X_{i1}\ind\{X_{i2}\textup{ obs}\}}
{\sum_{i=1}^N\ind\{X_{i2}\textup{ obs}\}}, && \overline X_{2,\mathrm{obs}}
=
\frac{\sum_{i=1}^N X_{i2}\ind\{X_{i2}\textup{ obs}\}}
{\sum_{i=1}^N\ind\{X_{i2}\textup{ obs}\}},
\end{align*}
and $\widehat{\beta}_1$ is the slope estimator computed on all available pairs. Under MAR, 
\[\prob(X_2\textup{ obs}\vert X_1,X_2)=\prob(X_2\textup{ obs}\vert X_1),\]
and the observation indicator is thus conditionally independent of the regression error given $X_1$. Consequently,
$
\cov(\varepsilon,X_1\vert X_2\textup{ obs})=0,
$\
so that
\[
\frac{\cov(X_1,X_2\mid X_2\textup{ obs})}{\var(X_1\mid X_2\textup{ obs})}=\beta_1.
\]
It follows that
\[
\widehat\mu_2
\;\xrightarrow{p}\;
\expv[X_2],
\]
showing that, unlike the naive moment estimator, the likelihood estimator remains consistent under MAR.
\end{example}

\section{Proofs of the main results}
\label{sec:MainProofs}

This section provides the proofs for the results presented in the main text. Auxiliary results are detailed in Sections~\ref{sec:ProofOfLemma}--\ref{sec:Norms}. We adopt the following notation throughout.

\textit{Notation.}
Consider functions $a$ and $b$ defined on some arbitrary domain. We write $a \lesssim b$, if there exists a universal constant $C > 0$ with $a < Cb$ for every point-wise evaluation of the functions. We write $a \simeq b$, if both $a \lesssim b$ and $b \lesssim a$. Constants $C_1, C_2, \dots$ may vary between proofs, whereas the constants $c_0, c_1, \dots$ remain fixed throughout the Appendix.

\begin{proof}[Proof of Proposition~\ref{prop:CondMeanVar}]
Let $f\colon [0,T] \to \mathbb{R}$ be a deterministic and square-integrable function. Recall the following basic properties of Itô integrals:
\begin{itemize}
    \item $\int_s^tf(u) \, \diff B(u)$ is Gaussian,
    \item $\expv[\int_s^t f(u) \, \diff B(u)] = 0$,
    \item $\var\left(\int_s^t f(u) \, \diff B(u)\right) = \int_s^t f^2(u) \, \diff u$,
\end{itemize}
for any $0 \leq s \leq t \leq T$. Equation~\eqref{eq:SolutionLSDE} states that
\begin{equation*}
X(t) = \Phi(s,t)X(s) + \int_s^t \Phi(u,t)b_2(u)\diff u + \int_s^t \Phi(u,t)\sigma(u)\diff B(u).
\end{equation*}
A property of the Itô integral implies $\expv[\int_s^t \Phi(u,t)\sigma(u)\,\diff B(u)] = 0$. Using independence of $X(s)$ and $(B(u)\colon s \leq u \leq t)$, we obtain the expression for the conditional mean,
\begin{equation*}
\expv[X(t)\vert X(s)] = \Phi(s,t)X(s) +  \int_s^t \Phi(u,t)b_2(u)\diff u.
\end{equation*}
Concerning the conditional variance, the Itô isometry \cite[Corollary~3.1.7]{oksendal2007stochastic} yields
\begin{equation*}
\var(X(t) \vert X(s)) = \expv\left[ \left( \int_s^t \Phi(u,t)\sigma(u)\diff B(u)\right)^2\right]=\int_s^t \Phi^2(u,t) \sigma^2(u)\, \diff u.
\end{equation*}
Similar arguments can be used to derive the other expressions for the unconditional moments.
\end{proof}

\begin{proof}[Proof of Proposition~\ref{prop:Doob}]
    The distribution of a Gaussian process is uniquely determined by its mean and covariance function. So we need to choose $h_0$ and $h_1$ such that the covariance
    \begin{equation*}
        \cov\left(h_0(s)B(h_1(s)), h_0(t)B(h_1(t))\right) = h_0(s) h_0(t) \min\{h_1(s), h_1(t)\},
    \end{equation*}
    matches the one of $\cov(X(s),X(t))$ for $0 \leq s \leq t \leq T$. Proposition~\ref{prop:CondMeanVar} shows that $\cov(X(s),X(t))$ is triangular in the sense that
    \begin{equation*}
        \cov(X(s),X(t)) = \Phi(0,s) \Phi(0,t) \left(v_0 + \int_0^s \Phi(0,u)^{-2}\sigma^2(u) \, \diff u\right) .
    \end{equation*}
    Setting $h_0(t) = \Phi(0,t)$ and $h_1(t) = v_0 + \int_0^t \Phi(0,u)^{-2} \sigma^2(u) \, \diff u$ then yields the desired form because~$h_1$ is an increasing function.
\end{proof}

\begin{proof}[Proof of Proposition~\ref{prop:MAR}]
    Let $(\Omega, \mathcal{A}, \mathbb{P})$ be the underlying probability space and $K \subset [0,T]$ be arbitrary but fixed. Define a new random variable $N^* = \min \{i\colon X(T_i) > \delta_0\}$
    with the convention that $N^* = n$ in case $X(T_i) \leq \delta_0$ for all $i = 1, \dots, n$. The indicator $\ind\{O \subset K\}$ can then be written as the function
    \begin{equation*}
        f_K(T_1, \dots, T_n, X) = \sum_{\ell=1}^n \ind\{T_1, \dots, T_\ell \in K\} \ind \{N^* = \ell\}.
    \end{equation*}
     Because the random locations $T_1, \dots, T_n$ are independent of $X$ by assumption, the conditional probability satisfies
    \begin{equation*}
        \prob(O \subset K\vert \mathcal{F}_{[0,T]})(\omega) = \expv \left[f_K(T_1, \dots, T_n, X(\omega))\right], \qquad \omega \in \Omega.
    \end{equation*}
    Notice that the event $\{T_1, \dots, T_\ell \in K\} \cap \{N^*= \ell\}$ depends on the path of $X$ only through its values on~$K$.
    Therefore,
    \[\omega \mapsto \expv \left[f_K(T_1, \dots, T_n, X(\omega))\right]\]
    is $\mathcal{F}_K$-measurable and we conclude that
    \begin{equation*}
        \prob(O \subset K\vert \mathcal{F}_{[0,T]}) = \prob(O \subset K\vert \mathcal{F}_K)
    \end{equation*}
    almost surely.
\end{proof}

\begin{proof}[Proof of Theorem~\ref{thm:ConvergenceRate}]
We only derive the convergence rate for $\widehat{\sigma}_{N}$, the other results follow analogously. By the triangle inequality and the fact $\normwww{f} \leq \norm{f}_{L^2}$, we obtain
\begin{align*}
    \normwww{\widehat{\sigma}_{N}^2 - \sigma_{0}^2}^2 \lesssim \normwww{\widehat{\sigma}_{N}^2 - \sigma_{0N}^2}^2 + \norm{\sigma_{0N}^2 - \sigma_0^2}_{L^2}^2.
\end{align*}
Lemma~\ref{lem:Conv} of Section~\ref{sec:ProofOfLemma} shows that
\begin{equation*}
    \normwww{\widehat{\sigma}_{N}^2 - \sigma_{0N}^2}^2 = O_p(\log(M) M N^{-1}).
\end{equation*}
Furthermore, the mean value theorem yields $\norm{\sigma_{0N}^2 - \sigma_0^2}_{L^2}^2 \lesssim \norm{\log \sigma_{0N} - \log \sigma_0}_{L^2}^2$. Therefore, Parseval's identity and Assumption~\ref{ass:beta} imply
\begin{equation*}
    \norm{\sigma_{0N}^2 - \sigma_0^2}_{L^2}^2 \lesssim \norm{\log \sigma_{0N} - \log \sigma_0}_{L^2}^2 = \sum_{k=M+1}^\infty \sigma_{0k}^2 \lesssim \sum_{k=M+1}^\infty k^{-2\beta} \lesssim M^{1-2\beta}.
\end{equation*}
Setting $M \sim N^{1/2\beta}$, we get
\begin{equation*}
      \normwww{\widehat{\sigma}_{N}^2 - \sigma_{0}^2}^2 = O_p(\log(M) M N^{-1}) + O(M^{1-2\beta}) = O_p\left(\log(N) N^{(1-2\beta)/2\beta}\right),
\end{equation*}
which finishes the proof of the theorem.
\end{proof}

\begin{proof}[Proof of Corollary~\ref{cor:L2Rates}]
    The proof follows from a straightforward adaptation of the preceding one, using the reversed inequalities presented in Section~\ref{sec:Norms}. Specifically, for $\widehat{\sigma}_N$, applying the triangle inequality and Lemma~\ref{lem:LowerBoundNormII} yields
    \begin{align*}
    \norm{\widehat{\sigma}_{N}^2 - \sigma_{0}^2}^2_{L^2} \lesssim \norm{\widehat{\sigma}_{N}^2 - \sigma_{0N}^2}^2_{L^2} + \norm{\sigma_{0N}^2 - \sigma_0^2}_{L^2}^2 \lesssim M \normwww{\widehat{\sigma}_{N}^2 - \sigma_{0N}^2}^2 + \norm{\sigma_{0N}^2 - \sigma_0^2}_{L^2}^2.
\end{align*}
    The remainder of the proof proceeds analogously.
\end{proof}

\begin{proof}[Proof of Theorem~\ref{thm:L2}]
    We derive a lower bound for the convergence rate of $\widehat{\sigma}_N$. Rates for $\widehat b_{1N}$ and~$\widehat b_{2N}$ can be derived along similar lines. The proof is based on Theorem~2.5 of \citet{tsybakov2009introduction}.

    Let $P \in \mathcal{P}$ be defined by setting $b_1 = b_2 = 0$ and leaving $\sigma^2$ as a free parameter,
    \begin{align*}
        \sigma(u) = \exp(f(u)), && f(u) = \sum_{k=1}^\infty \sigma_k \varphi_k(u), && u \in [0,1].    \end{align*}
    Let $P_0$ denote the true model corresponding to $\sigma_0 = 1$ and $f_0 = 0$. The Kullback--Leibler divergence satisfies
    \begin{align*}
        \frac{\text{KL}(P, P_0)}{N} \lesssim \int_0^1  \left(\int_0^t \Delta \sigma^2(u)\,\diff u\right)^2 \, \diff t+ \int_0^1\int_0^{t_2} \frac{\left(\int_{t_1}^{t_2} \Delta \sigma^2(u)\,\diff u \right)^2}{(t_2-t_1)^2} \, \diff t_1\, \diff t_2.
    \end{align*}
    where $\Delta \sigma^2(u) = \sigma^2(u) - 1$, for $u \in [0,1]$. A Taylor expansion yields $\sigma^2(u) - 1 = 2f(u) + r(u)f(u)^2$ where $\norm{r}_\infty < \infty$. Therefore,
        \begin{align*}
        \frac{\text{KL}(P, P_0)}{N} \lesssim \int_0^1 \left(\int_0^t f(u)\,\diff u\right)^2 \, \diff t+ \int_0^1\int_0^{t_2} \frac{\left(\int_{t_1}^{t_2} f(u)\,\diff u \right)^2}{(t_2-t_1)^2} \, \diff t_1\, \diff t_2 + R(f),
    \end{align*}
    where the remainder term is given by
    \begin{equation*}
        R(f) \lesssim \int_0^1 \left(\int_0^t f(u)^2\,\diff u\right)^2 \, \diff t + \int_0^1\int_0^{t_2} \frac{\left(\int_{t_1}^{t_2} f(u)^2\,\diff u \right)^2}{(t_2-t_1)^2} \, \diff t_1\, \diff t_2.
    \end{equation*}
    To simplify notation, write
    \begin{equation*}
    \langle f, g \rangle = \int_0^1 \left(\int_0^t f(u) \diff u\right)\left(\int_0^t g(u) \diff u\right) \, \diff t + \int_0^1 \int_{0}^{t_2} \frac{\left(\int_{t_1}^{t_2} f(u) \diff u\right)\left(\int_{t_1}^{t_2} g(u) \diff u\right)}{(t_2-t_1)^2} \, \diff t_1 \, \diff t_2,
    \end{equation*}
    and observe that
    \begin{equation}\label{eq:KLsigma}
        \text{KL}(P, P_0) \leq C_1 N (\langle  f,f\rangle + R(f)),
    \end{equation}
    for some constant $C_1 > 0$.

    Set $m = \lceil c_0 N^{1/(2\beta + 1)}\rceil$ and let $\Omega \subset \lbrace\omega = (\omega_1, \dots, \omega_m), \,\omega_i \in \{-1,1\} \rbrace = \{-1, 1\}^m $ be chosen below. For each $\omega \in \Omega$, define
    \begin{equation*}
    f^{(\omega)}(t) = \rho \sum_{k=m+1}^{2m} \omega_{k - m} k^{-\beta}\varphi_k(t), \qquad t \in [0,1],
    \end{equation*}
    where $\rho \in (0,C)$ is a constant which ensures that $\abs{\sigma_k^{(\omega)}} = \rho k^{-\beta}<Ck^{-\beta}$. Then,
    \begin{equation*}
        \langle f^{(\omega)}, f^{(\omega)}\rangle = \rho^2 \sum_{k,\ell=m+1}^{2m} \omega_{k-m} \omega_{\ell-m} k^{-\beta} \ell^{-\beta} \langle \varphi_k,\varphi_\ell\rangle.
    \end{equation*}
    Take $\omega \in \{-1,1\}^m$ to be uniformly distributed and observe
    \begin{equation*}
        \expv[\langle f^{(\omega)}, f^{(\omega)}\rangle] = \rho^2 \sum_{k=m+1}^{2m} k^{-2\beta} \langle \varphi_k,  \varphi_k \rangle.
    \end{equation*}
    Because there exists a constant $C_2>0$ such that $\langle \varphi_k,\varphi_k\rangle < C_2 k^{-1}$ for all $k\geq 1$, it follows that
    \begin{equation}\label{eq:MarkovP1}
        \expv[\langle f^{(\omega)}, f^{(\omega)}\rangle]  = \rho^2 \sum_{k=m+1}^{2m} k^{-2\beta} \langle \varphi_k,  \varphi_k \rangle \leq C_2 \sum_{k=m+1}^{2m} k^{-2\beta - 1} \leq C_3 m^{-2\beta},
    \end{equation}
    for some constant $C_3 > 0$. Moreover, Lemma~\ref{lem:UpperBoundNorm} implies
    \begin{equation*}
        R(f^{(\omega)}) \lesssim \int_0^1 (f^{(\omega)}(u))^4 \, \diff u.
    \end{equation*}
    Thus,
    \begin{equation*}
        \expv[R(f^{(\omega)})] \lesssim \sum_{k_1, k_2, k_3, k_4=m+1}^{2m} (k_1 k_2k_3k_4)^{-\beta} \expv[\omega_{k_1-m}\omega_{k_2-m}\omega_{k_3-m} \omega_{k_4-m}] \, \psi(k_1, k_2, k_3,k_4),
    \end{equation*}
    where $\psi(k_1, k_2, k_3, k_4) = \int_0^1 \varphi_{k_1}(u)\varphi_{k_2}(u)\varphi_{k_3}(u)\varphi_{k_4}(u) \, \diff u$. Since $\psi$ is bounded and $\expv[\omega_{k-m}] = 0$ for any $k$, the independence between different entries of $\omega$ implies
    \begin{equation}\label{eq:MarkovP2}
        \expv[R(f^{(\omega)})] \lesssim \sum_{k_1, k_2 = m+1}^{2m} (k_1k_2)^{-2\beta} \leq C_4 m^{-4\beta + 2} \leq C_4 m^{-2\beta},
    \end{equation}
    for a constant $C_4 > 0$ where we have used the assumption that $\beta > 1$. Let $C_5 = 4\max\{C_3, C_4\}$. Using~\eqref{eq:MarkovP1} and~\eqref{eq:MarkovP2}, the Markov inequality implies,
    \begin{equation*}
        \prob\left(\omega \in \{-1,1\}^m\colon \langle f^{(\omega)}, f^{(\omega)}\rangle>C_5 m^{-2\beta} \text{ or } R(f^{(\omega)}) > C_5 m^{-2\beta}\right) \leq 1/2.
    \end{equation*}
        Consequently, there exists a subset $\Omega \subset \{-1,1\}^m$ of cardinality $|\Omega| \geq 2^{m-1}$, such that
    \begin{equation*}
        \langle f^{(\omega)}, f^{(\omega)}\rangle\leq C_5m^{-2\beta}, \qquad R(f^{(\omega)}) \leq C_5 m^{-2\beta}, \qquad \forall \, \omega \in \Omega.
    \end{equation*}
    Let $\rho(\omega, \omega') = \sum_{k=1}^m \mathbbm{1}\{\omega_k \neq \omega_k'\}$ denote the Hamming distance. An adaptation of the Varsha\-mov--Gilbert bound \citep[Lemma~2.9]{tsybakov2009introduction} yields the existence of an integer $M \geq 2^{m/8}$ as well as a subset $\{\omega^{(1)}, \dots, \omega^{(M)}\}$ of cardinality $M$ such that
    \begin{enumerate}[label=(\alph*)]
        \item $\rho(\omega, \omega') \geq \frac{m}{8}$,
        \item $\langle f^{(\omega)}, f^{(\omega)}\rangle\leq C_5m^{-2\beta}$ and $R(f^{(\omega)}) \leq C_5 m^{-2\beta}$,
    \end{enumerate}
    for all $\omega \neq \omega' \in \{\omega^{(1)}, \dots, \omega^{(M)}\}$. This motivates us to define the class of functions $f^{(0)} = 0$ and
    \begin{equation*}
        f^{(j)} = f^{(\omega)}\mid (\omega = \omega^{(j)}), \qquad j = 1, \dots, M.
    \end{equation*}
    The assertion then follows from Theorem~2.5 in \cite{tsybakov2009introduction} once we have established the following two conditions:
\begin{enumerate}[label=(\roman*)]
    \item $\norm{(\sigma^{(i)})^2 - (\sigma^{(j)})^2}_{L^2}^2 \geq 2s$, for all $0 \leq i < j \leq M$,
    \item $\frac{1}{M} \sum_{j=1}^M \text{KL}(P^{(j)}, P^{(0)}) \leq \alpha \log(M)$, for some $\alpha \in (0,1/8)$,
\end{enumerate}
for $s \simeq N^{-(2\beta-1)/(2\beta +1)}$.
Concerning (i),
\begin{equation*}
    \norm{(\sigma^{(i)})^2 - (\sigma^{(j)})^2}_{L^2}^2 \gtrsim \sum_{k=m+1}^{2m} (\omega_{k-m}^{(i)} - \omega_{k-m}^{(j)})^2 k^{-2\beta} \geq \rho(\omega^{(i)}, \omega^{(j)}) (2m)^{-2\beta} \gtrsim m^{-2\beta + 1},
\end{equation*}
where we have used the mean value theorem, Parseval's identity, and property (a).

Concerning (ii), \eqref{eq:KLsigma} and property (b) imply
\begin{equation*}
    \text{KL}(P^{(j)},P^{(0)}) \leq 
    2C_1 C_5 N  m^{-2\beta}.
\end{equation*}
Set $C_6 = 2C_1C_5$. Regarding the constant $c_0$ in the definition of $m$, fix $\alpha \in (0,1/8)$ and choose
\begin{equation*}
    c_0 > \left(\frac{8C_6}{\alpha \log(2) }\right)^{1/(2\beta + 1)}.
\end{equation*}
For large enough $m$ such that $M\geq 2^{m/8}$,
\begin{align*}
    \text{KL}(P^{(j)},P^{(0)}) \leq N C_6 m^{-2\beta} \leq \frac{8 C_6}{\alpha \log(2)}  c_0^{-2\beta -1}  \frac{\alpha \log(2)m}{8} < \alpha \log(M),
\end{align*}
which establishes (ii). Theorem~2.5 in \citet{tsybakov2009introduction} finishes the proof.
\end{proof}

\begin{remark}
    The proofs for $b_1$ and $b_2$ proceed similarly. In these cases, the Kullback--Leibler divergence induces a different inner product that satisfies $\langle\varphi_k,\varphi_k\rangle \lesssim k^{-2}\log(k)$. Consequently, the resulting lower bound yields a slower rate.
\end{remark}

\begin{proof}[Proof of Proposition~\ref{prop:PPB}]
    The proof immediately follows from Proposition~\ref{prop:CondMeanVar}.
\end{proof}

\begin{proof}[Proof of Proposition~\ref{prop:SPB}]
Set $[0,T] = [0,1]$. Doob's representation~\eqref{eq:Doob} and the scaling property of the Brownian motion yield,
\begin{equation*}
\abs{X(t) - \mu(t)}  \overset{d}{=} h_0(t) \abs{B(h_1(t))} \overset{d}{=} h_0(t) \, \sqrt{h_1(1)} \, \abs*{B\left(\frac{h_1(t)}{h_1(1)}\right)}, \qquad t \in [0,1].
\end{equation*}
Furthermore,
\begin{align*}
\abs*{B\left(\frac{h_1(t)}{h_1(1)}\right)} \leq c \sqrt{\frac{h_1(t)}{h_1(1)}} \quad \forall t \in [0,1] && \iff && \abs{B(s)} \leq c \sqrt{s}\quad \forall s \in \left[\varepsilon, 1\right],
\end{align*}
where $\varepsilon = h_1(0)/h_1(1)=v_0/h_1(1)>0$. We then can use the method of \cite{delong1981crossing} to find a threshold $c = c_\alpha$ such that
\begin{equation*}
    \prob\left(\abs{B(s)} \leq c \sqrt{s}, \forall s \in [\varepsilon, 1]\right) = 1-\alpha.
\end{equation*}
This yields
\begin{equation*}
    \prob\left(\abs{X(t)-\mu(t)} \leq c h_0(t) \sqrt{h_1(t)}, \forall t \in [0, 1]\right) = 1-\alpha,
\end{equation*}
and finishes the proof.
\end{proof}

\section{Proof of Lemma~\ref{lem:Conv}}\label{sec:ProofOfLemma}

Our proof strategy for Lemma~\ref{lem:Conv} is based on the theory of $M$-estimators. We heavily rely on the results and notation presented in the book \citep{van2023weak}. Recall that our estimator $(\widehat{b}_{1N},\widehat{b}_{2N},\widehat{\sigma}_{N})$ is defined as the maximizer of the function
\begin{equation*}
S_N(b_1, b_2, \sigma) = \frac{1}{N} \sum_{i=1}^N \log f(X_{i2}\vert X_{i1}, T_{i1}, T_{i2}) \ind\{X_{i1} \leq \delta_0\},
\end{equation*}
where the maximum is taken over all truncated functions of the form~\eqref{eq:ExpansionTruncated}. The corresponding population version is given by
\begin{equation*}
    S(b_1, b_2, \sigma) = \expv[S_N(b_1, b_2, \sigma)],
\end{equation*}
where we note that, hereinafter, all expectations and probabilities refer to the true model induced by $(b_{10}, b_{20}, \sigma_0)$. To prove that the maximizer of $S_N$ converges to the (truncated) maximizer of~$S$ in Lemma~\ref{lem:Conv}, we use Theorem~3.4.1 of \cite{van2023weak} which is stated below. The theorem essentially requires the population function $S$ to decrease quadratically away from its maximizer (see \eqref{eq:Bias}) and the random fluctuations of the empirical function $S_N$ around $S$ must remain small by comparison (see \eqref{eq:Variance}). For consistency with our framework, we slightly adapted the theorem.

\begin{theorem}[\cite{van2023weak}]\label{thm:VdV}
For each $N$, let $S_N$ and $S$ denote stochastic processes indexed by a set $\Theta_N$ and $\theta_{0N} \in \Theta_N$. Let~$\theta \mapsto d(\theta, \theta_{0N})$ be an arbitrary map from $\Theta_N$ to $[0, \infty)$, and $\underline{r}_N \ge 0$. Suppose that, for every $N$ and $r > \underline{r}_N$,
\begin{equation}\label{eq:Bias}
\inf_{\theta \in \Theta_N : r/2 < d(\theta, \theta_{0N}) \leq r} S(\theta_{0N}) -  S(\theta)  \geq r^2,
\end{equation}
\begin{equation}\label{eq:Variance}
\expv \left[\sup_{\theta \in \Theta_N: d(\theta, \theta_{0N}) \le r} \sqrt{N} \abs*{ (S_N - S)(\theta) - (S_N - S)(\theta_{0N}) } \right]\lesssim \phi_N(r),
\end{equation}
for increasing functions $\phi_N\colon [\underline{r}_N, \infty) \to \mathbb{R}$ such that $r \mapsto \phi_N(r)/r^\alpha$ is decreasing for some $\alpha < 2$. Let $r_N$ satisfy
\begin{align*}
    \phi_N(r_N) \le \sqrt{N}r_N^2, && r_N \ge \underline{r}_N.
\end{align*}
\noindent If the sequence $\widehat{\theta}_N$ maximizes $S_N(\theta)$ over $\Theta_N$, then $d(\widehat{\theta}_N, \theta_{0N}) = O_p(r_N)$.
\end{theorem}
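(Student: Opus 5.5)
The plan is the standard peeling (shelling) argument of van der Vaart and Wellner, carried out directly with the two conditions \eqref{eq:Bias} and \eqref{eq:Variance}. Fix $N$ and an integer $K\ge 1$. Partition the parameter set $\{\theta\in\Theta_N\colon d(\theta,\theta_{0N})>2^K r_N\}$ into shells
\[
\mathcal{S}_{j,N}=\{\theta\in\Theta_N\colon 2^{j-1}r_N<d(\theta,\theta_{0N})\le 2^j r_N\},\qquad j>K .
\]
The aim is to show that $\prob(d(\widehat\theta_N,\theta_{0N})>2^K r_N)$ is bounded by a quantity that does not depend on $N$ and tends to zero as $K\to\infty$. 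That is exactly the statement $d(\widehat\theta_N,\theta_{0N})=O_p(r_N)$.

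\textbf{Reduction to an empirical-process deviation.} Suppose $\widehat\theta_N\in\mathcal{S}_{j,N}$. Since $\widehat\theta_N$ maximizes $S_N$ over $\Theta_N\ni\theta_{0N}$, we have $\sup_{\theta\in\mathcal{S}_{j,N}}\{S_N(\theta)-S_N(\theta_{0N})\}\ge 0$.

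Apply \eqref{eq:Bias} with $r=2^j r_N$. This is admissible because $r>r_N\ge\underline r_N$, and it gives $S(\theta)-S(\theta_{0N})\le -2^{2j}r_N^2$ on the shell. Adding and subtracting $S$ then yields
\[
\sup_{\theta\colon d(\theta,\theta_{0N})\le 2^j r_N}\bigl|(S_N-S)(\theta)-(S_N-S)(\theta_{0N})\bigr|\ge 2^{2j}r_N^2 .
\]

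\textbf{Markov's inequality and the growth condition on $\phi_N$.} By Markov's inequality (in outer probability if measurability is an issue) and \eqref{eq:Variance}, the probability of this event is at most a constant times
\[
\frac{\phi_N(2^j r_N)}{\sqrt N\,2^{2j}r_N^2}.
\]
Since $r\mapsto\phi_N(r)/r^\alpha$ is decreasing, $\phi_N(cr)\le c^\alpha\phi_N(r)$ for $c\ge1$. Combining this with the defining inequality $\phi_N(r_N)\le\sqrt N r_N^2$, the bound becomes $\lesssim 2^{j\alpha}2^{-2j}=2^{-j(2-\alpha)}$.

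Summing over $j>K$ with a union bound gives
\[
\prob\bigl(d(\widehat\theta_N,\theta_{0N})>2^K r_N\bigr)\lesssim\sum_{j>K}2^{-j(2-\alpha)} .
\]
Because $\alpha<2$, this is the tail of a convergent geometric series. It is uniform in $N$ and vanishes as $K\to\infty$.

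\textbf{Where care is needed.} The argument itself is routine. The only points requiring attention are bookkeeping.
\begin{itemize}
\item The shells must use radii $2^j r_N\ge\underline r_N$, so that both hypotheses apply on every shell. This is guaranteed by $r_N\ge\underline r_N$ and $j\ge1$.
\item The scaling of $\phi_N$ is needed only for $c\ge1$, so the domain restriction $[\underline r_N,\infty)$ is respected.
\item The implicit constant in \eqref{eq:Variance} must be uniform in $N$ and $r$.
\end{itemize}
If $\widehat\theta_N$ were only an approximate maximizer, say $S_N(\widehat\theta_N)\ge S_N(\theta_{0N})-O_p(r_N^2)$, one would absorb the slack into the shell lower bound $2^{2j}r_N^2$ by halving it for $j$ large. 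The theorem as stated assumes an exact maximizer, so this is unnecessary here.
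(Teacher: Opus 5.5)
Your proposal is correct and is essentially the shelling argument used by van der Vaart and Wellner to prove their Theorem~3.4.1; the paper does not reprove the result and only cites it in this adapted form. Because the adapted statement imposes both conditions for every $r>\underline{r}_N$ with no upper bound and assumes an exact maximizer, the consistency hypothesis and the approximate-maximization slack of the original theorem are not needed, exactly as you observe.
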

We consider the distance
\begin{equation}\label{eq:distance}
d((b_{1}, b_{2}, \sigma), (\underline{b}_{1}, \underline{b}_{2}, \underline{\sigma})) = \sqrt{\normw{b_1 - \underline{b}_1}^2 +\normww{b_2 - \underline{b}_2}^2 + \normwww{\sigma^2 - \underline{\sigma}^2}^2}.
\end{equation}

\begin{lemma}\label{lem:Conv}
It holds that
\begin{equation*}
\normw{\widehat{b}_{1N} - b_{10N}}^2 +\normww{\widehat{b}_{2N} - b_{20N}}^2 + \normwww{\widehat{\sigma}^2_N-\sigma_{0N}^2}^2 = O_p(\log(M) MN^{-1}).
\end{equation*}
\end{lemma}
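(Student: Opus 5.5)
We will apply Theorem~\ref{thm:VdV} with $\Theta_N$ the set of truncated triples $(b_{1N},b_{2N},\sigma_N)$ of the form~\eqref{eq:ExpansionTruncated}, where the coefficients obey the bounds of Assumption~\ref{ass:beta} (so all elements are uniformly bounded). We take $\theta_{0N}=(b_{10N},b_{20N},\sigma_{0N})$ to be the truncation of the true parameter, $d$ the distance~\eqref{eq:distance}, and $\phi_N(r)=r\sqrt{M\log M}$ up to constants. Then $\phi_N(r_N)\le\sqrt N r_N^2$ gives $r_N^2\simeq M\log(M)/N$, which is the claim. Two conditions have to be verified: the curvature bound~\eqref{eq:Bias} and the modulus bound~\eqref{eq:Variance}.

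\emph{Curvature.} Write $S(\theta_0)-S(\theta)$ as an expected conditional Kullback--Leibler divergence between the Gaussian transition laws $N(m_0,v_0)$ and $N(m,v)$ of $X(T_2)$ given $(X(T_1),T_1,T_2)$, restricted to $\{X(T_1)\le\delta_0\}$. Because $X(T_1)$ is independent of the dropout only through $X(T_1)$ itself (Proposition~\ref{prop:MAR}), the true parameter maximizes the population criterion and $S(\theta_0)-S(\theta)=\expv[\mathrm{KL}\,\ind\{X(T_1)\le\delta_0\}]$. Since all parameters are uniformly bounded, the Gaussian KL is $\simeq (m-m_0)^2/v_0+(\log v-\log v_0)^2\simeq (m-m_0)^2/(t_2-t_1)+(v-v_0)^2/(t_2-t_1)^2$, using $v(t_1,t_2)\simeq t_2-t_1$.

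Linearizing $\Phi$ (bounded $b_1$), the mean difference is approximately $x\int_{t_1}^{t_2}\Delta b_1+\int_{t_1}^{t_2}\Delta b_2$, and the variance difference is approximately $\int_{t_1}^{t_2}\Delta\sigma^2$ plus terms of order $(t_2-t_1)\int|\Delta b_1|$. Averaging over $x=X(t_1)$ restricted to $\{x\le\delta_0\}$ gives a quadratic form in $(x,1)$. Because $v_0>0$ (Assumption~\ref{ass:X}), the conditional law of $X(t_1)$ on $\{x\le\delta_0\}$ has variance bounded away from zero uniformly in $t_1$, and $\prob(X(t_1)\le\delta_0)$ is bounded below. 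Hence the $2\times2$ second-moment matrix is uniformly positive definite, and the $b_1$ and $b_2$ contributions separate. This yields $S(\theta_{0})-S(\theta)\gtrsim d(\theta,\theta_0)^2$ locally.

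Two further points are needed. The centre is $\theta_{0N}$ rather than $\theta_0$, but the bias $d(\theta_{0N},\theta_0)^2\lesssim M^{1-2\beta}$ is absorbed by choosing $\underline r_N$ of that order. The linearization remainders (cross terms, $\Phi$ nonlinearity) must be dominated. For the latter we use consistency first: a uniform law of large numbers on the compact parameter set gives $d(\widehat\theta_N,\theta_{0N})\to0$, and then restrict to small $r$. This step, controlling the remainder terms uniformly in the weighted norms $\normww{\cdot},\normwww{\cdot}$ for functions in the $3M$-dimensional sieve, is the main obstacle. We would handle it using the comparison inequalities of Section~\ref{sec:Norms}, namely that the $L^2$ or sup norm of a sieve element is at most a power of $M$ times its weighted norm, together with a preliminary rate.

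\emph{Modulus.} The criterion $\theta\mapsto\log f(X_2\mid X_1,T_1,T_2)\ind\{X_1\le\delta_0\}$ is smooth in the $3M$ coefficients. The centred increments are sub-exponential, being quadratic in Gaussian variables, and the $1/(t_2-t_1)$ factors are tamed because $v\simeq t_2-t_1$ appears in numerator and denominator. Their $L^2(P)$ size is bounded by $d(\theta,\theta_{0N})$ by the same computation as for the curvature. We would bound the expected supremum over $\{d\le r\}$ with a Bernstein-type maximal inequality (van der Vaart--Wellner, Lemma~3.4.3) over a finite-dimensional class. The bracketing entropy of a ball of radius $r$ in a $3M$-dimensional Lipschitz-parametrized class is $\lesssim M\log(M/\varepsilon)$, where the $\log M$ comes from the Lipschitz constant growing polynomially in $M$. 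This gives $\phi_N(r)\lesssim r\sqrt{M\log M}+M\log M/\sqrt N$. The second term is dominated at $r\ge r_N$, and $\phi_N(r)/r$ is decreasing as required.

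Plugging both bounds into Theorem~\ref{thm:VdV} yields $d(\widehat\theta_N,\theta_{0N})^2=O_p(M\log(M)/N)$, which is the statement of the lemma.
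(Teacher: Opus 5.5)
Your overall architecture is the paper's: Theorem~\ref{thm:VdV} with the sieve $\Theta_N$, centre $\theta_{0N}$, the distance $d$, $\underline r_N\simeq d(\theta_{0N},\theta_0)$, and Bernstein-norm bracketing giving $\phi_N(r)\simeq r\sqrt{M\log M}+M\log M/\sqrt N$. The modulus part is fine.

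The gap is in the curvature step. Your approximations $m_0-m\approx x\int_{t_1}^{t_2}\Delta b_1+\int_{t_1}^{t_2}\Delta b_2$ and $v_0-v\approx\int_{t_1}^{t_2}\Delta\sigma^2+\ldots$ drop terms that are \emph{linear} in the perturbation, not higher order:
\begin{itemize}
\item the drift contribution $\int_{t_1}^{t_2}\{\Phi_0(u,t_2)-\Phi(u,t_2)\}b_2(u)\,\diff u$ to the intercept of the conditional mean;
\item the drift contribution $\int_{t_1}^{t_2}\{\Phi_0^2(u,t_2)-\Phi^2(u,t_2)\}\sigma^2(u)\,\diff u$ to the variance;
\item the weights $\Phi_0(u,t_2)$ and $\Phi_0^2(u,t_2)$ multiplying $\Delta b_2$ and $\Delta\sigma^2$. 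For instance, $\int_{t_1}^{t_2}\{\Phi_0(u,t_2)-1\}\Delta b_2(u)\,\diff u$ has weighted second moment bounded only by a non-small multiple of $\normww{\Delta b_2}^2$.
\end{itemize}
These terms enter the Kullback--Leibler divergence at the same quadratic order as the leading terms. Consistency and shrinking $r$ therefore do nothing to them, and a preliminary rate does not help, because every term scales like $d^2$.

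The comparison inequalities of Section~\ref{sec:Norms} do not rescue this either. Your bound $(t_2-t_1)\int_{t_1}^{t_2}\abs{\Delta b_1}$ leads to $\norm{\Delta b_1}_{L^2}^2$, which is only $\lesssim M^2\normww{\Delta b_1}^2$. The cross term could then be absorbed only with an $M$-dependent curvature constant, which destroys the $M\log(M)/N$ rate.

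What is missing is that these cross terms are controlled by the \emph{weakest} norm of $\Delta b_1$ with an $M$-free constant. The paper avoids linearization altogether:
\begin{itemize}
\item Uniform boundedness of the sieve plus the mean value theorem gives $R_1\simeq\normw{\Delta b_1}^2$ exactly.
\item Cauchy--Schwarz and a Fubini swap give $R_2,R_4\lesssim\normw{\Delta b_1}^2$; after the swap, the inner $\int_u^{t_2}\Delta b_1$ is integrated against the weight $u\le 1/(t_2-u)$.
\item An integration-by-parts argument, whose correction term is bounded by $R_3$ itself, gives $R_3\gtrsim\normww{\Delta b_2}^2$, and likewise $R_5\gtrsim\normwww{\Delta\sigma^2}^2$ (Lemma~\ref{lem:Rs}).
\item The inequality $(x+y)^2\ge(1-\varepsilon^{-1})x^2+(1-\varepsilon)y^2$ with $\varepsilon$ close to $1$ absorbs the cross terms. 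This uses your correct observation that $\var(X_1\mid X_1\le\delta_0,T_1)$ is bounded below.
\end{itemize}
The result is the \emph{global} two-sided bound $c_2d^2(\theta,\theta_0)\le S(\theta_0)-S(\theta)\le c_1 d^2(\theta,\theta_0)$ of Lemma~\ref{lem:KLlower}. It makes your consistency step unnecessary. It also supplies the upper bound $S(\theta_0)-S(\theta_{0N})\lesssim d^2(\theta_{0N},\theta_0)$, which you need but do not state, to move the curvature from $\theta_0$ to $\theta_{0N}$.
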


\begin{proof}[Proof of Lemma~\ref{lem:Conv}]
We use Theorem~\ref{thm:VdV}. To this end, identify $\theta$ with $(b_1, b_2, \sigma)$ and  set $\theta_{0N} = (b_{10N}, b_{20N}, \sigma_{0N})$. The set $\Theta_N$ refers to the set of functions in \eqref{eq:ExpansionTruncated} constrained by Assumption~\ref{ass:beta}. Consider the distance $d$ in \eqref{eq:distance}. Define $\underline{r}_N = 2c_0\,d(\theta_{0N}, \theta_0)$ for the choice of $c_0 > 0$ in Lemma~\ref{lem:Bias}. The lemma then implies that the condition~\eqref{eq:Bias} is satisfied. Moreover, Lemma~\ref{lem:Variance} shows that $\phi_N(r_N) \lesssim \sqrt{N} r_N^2$ if $\sqrt{M\log(M)}\,r_N \leq \sqrt{N} r_N^2$. The convergence rate is thus determined by
$$ \sqrt{M\log(M)}\,r_N \leq \sqrt{N}r_N^2, \qquad r_N \geq 2c_0\,d(\theta_{0N},\theta_0).$$
Using $\normw{\cdot} \leq \normwww{\cdot} \leq \norm{\cdot}_{L^2}$ by Lemma~\ref{lem:UpperBoundNorm}, Assumption~\ref{ass:beta}, and $M \sim N^{1/2\beta}$,
\begin{equation*}
d^2(\theta_{0N}, \theta_0) \lesssim \sum_{k=M+1}^\infty (b_{1,0k}^2 + b_{2,0k}^2+ \sigma_{0k}^2) \lesssim \sum_{k=M+1}^\infty k^{-2\beta} \lesssim M^{1-2\beta} = MN^{-1}.
\end{equation*}
Combining the above, we obtain the rate $d^2(\widehat{\theta}_N, \theta_{0N}) = O_p(\log(M) MN^{-1})$.
\end{proof}

\subsection{Bias term}

Identify $\theta$ with $(b_1, b_2, \sigma)$ and observe that the difference
\begin{equation}\label{eq:KLdiv}
    S(\theta_0) - S(\theta) = \expv\left[\log \frac{f_0(X_2\vert X_1, T_1, T_2)}{f(X_2\vert X_1, T_1, T_2)} \,\ind\{X_1 \leq \delta_0\}\right],
\end{equation}
relates to a truncated version of the Kullback-Leibler divergence. Our first goal is to bound $S(\theta_{0N})-S(\theta)$.

\begin{lemma}[Bias]\label{lem:Bias}
Consider the distance $d$ in \eqref{eq:distance}. There exists some constant $c_0 > 0$ such that
\begin{equation*}
S(\theta_{0N})-S(\theta) \gtrsim d^2(\theta, \theta_{0N}),
\end{equation*}
provided $d(\theta, \theta_{0N}) \geq c_0\, d(\theta_0, \theta_{0N})$.
\end{lemma}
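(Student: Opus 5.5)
The plan is to write $S(\theta_{0N})-S(\theta) = [S(\theta_0)-S(\theta)] - [S(\theta_0)-S(\theta_{0N})]$. We then bound the first bracket from below by a multiple of $d^2(\theta,\theta_0)$ and the second from above by a multiple of $d^2(\theta_{0N},\theta_0)$. The triangle inequality for $d$ converts this into a bound in terms of $d(\theta,\theta_{0N})$, once $d(\theta,\theta_{0N})\ge c_0\, d(\theta_0,\theta_{0N})$ with $c_0$ large. Both brackets are truncated Kullback--Leibler divergences by \eqref{eq:KLdiv}.

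\emph{Step 1: closed-form KL between Gaussian transitions.} Conditionally on $(X_1,T_1,T_2)$, the transitions $f_0$ and $f$ are both Gaussian, with means $m_0(X_1)$, $m(X_1)$ and variances $v_0$, $v$ given by Proposition~\ref{prop:CondMeanVar}. Hence the conditional KL equals
\[
\tfrac12\Big(\tfrac{v_0}{v}-1-\log\tfrac{v_0}{v}\Big)+\tfrac{(m_0(X_1)-m(X_1))^2}{2v}.
\]
Under Assumption~\ref{ass:beta}, all coefficient functions in $\Theta_N$ are uniformly bounded and $\sigma$ is bounded away from $0$ and $\infty$. Consequently $v\simeq v_0\simeq T_2-T_1$, and $\frac{v_0}{v}-1-\log\frac{v_0}{v}\simeq (v_0-v)^2/(T_2-T_1)^2$. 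The same uniform bounds give the upper estimate for the second bracket, with all constants uniform over $\Theta_N$.

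\emph{Step 2: linearization.} We expand $\Phi(s,t)=\exp(\int_s^t b_1)$ around the truth for the variance and mean differences. Writing $\Delta=T_2-T_1$, we get $v_0-v=\int_{T_1}^{T_2}(\sigma_0^2-\sigma^2)+O(\Delta^2\|b_1-b_{10}\|_\infty)$. The mean difference is $(e^{\int b_{10}}-e^{\int b_1})X_1 + \int(\Phi_0 b_{20}-\Phi b_2)$, which is approximately $X_1\int_{T_1}^{T_2}(b_{10}-b_1)+\int_{T_1}^{T_2}(b_{20}-b_2)$ plus higher-order cross terms.

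\emph{Step 3: integrate over the design and $X_1$.} Integrating against the uniform density of the ordered pair $(T_1,T_2)$ turns the variance term into $\normwww{\sigma^2-\sigma_0^2}^2$. The mean term is divided by $v\simeq\Delta$, which produces the $\normww{\cdot}$ weights. For the drift part we need the quadratic form
\[
\expv\Big[\ind\{X_1\le\delta_0\}\big(X_1 A+B\big)^2\Big],\qquad A=\int_{T_1}^{T_2}(b_{10}-b_1),\quad B=\int_{T_1}^{T_2}(b_{20}-b_2),
\]
to dominate $A^2+B^2$ uniformly in $T_1$. Given $T_1$, the variable $X_1$ is Gaussian with variance at least $v_0>0$ (Assumption~\ref{ass:X}), so its law truncated to $(-\infty,\delta_0]$ is nondegenerate. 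The $2\times2$ second-moment matrix of $(X_1,1)$ restricted to $\{X_1\le\delta_0\}$ therefore has smallest eigenvalue bounded below uniformly in $T_1\in[0,1]$. This is exactly where $v_0>0$ is needed and where the ODD truncation is harmless.

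\emph{Main obstacle.} The hardest step is controlling the remainders in Step 2, which are cubic and cross terms in $\theta-\theta_0$, by a small multiple of $d^2$. These remainders are naturally bounded in sup or $L^2$ norms, not in the weaker $\normww{\cdot}$. I would first handle this locally: on $\Theta_N$ with $M$ growing, I would use the reverse inequalities from Section~\ref{sec:Norms} (e.g.\ $\norm{\cdot}_{L^2}^2\lesssim M\normwww{\cdot}^2$ and an analogous bound for $\normww{\cdot}$), together with the uniform boundedness from Assumption~\ref{ass:beta}. If the perturbation is not small, I would fall back on a compactness and identifiability argument: $S(\theta_0)-S(\theta)$ is bounded below by a positive constant, because the KL expression from Step 1 is an exact nonnegative functional vanishing only at $\theta_0$. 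Since $d$ is uniformly bounded on $\Theta_N$, a positive constant lower bound already dominates $d^2$ up to a constant.
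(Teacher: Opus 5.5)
Your top-level reduction is exactly the paper's proof of this lemma. The paper writes $S(\theta_{0N})-S(\theta)=[S(\theta_0)-S(\theta)]+[S(\theta_{0N})-S(\theta_0)]$, invokes the two-sided bound $c_2d^2(\theta_0,\theta)\le S(\theta_0)-S(\theta)\le c_1d^2(\theta_0,\theta)$ (Lemma~\ref{lem:KLlower}), uses $d(\theta,\theta_0)\ge(1-c_0^{-1})\,d(\theta,\theta_{0N})$, and takes $c_0$ large so that $c_2(1-c_0^{-1})^2>c_1c_0^{-2}$. The gap lies in how you propose to establish that two-sided bound.

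In Step~2 you drop the transition weights. Exactly, $v_0-v=\int_{T_1}^{T_2}\Phi_0^2(u,T_2)(\sigma_0^2-\sigma^2)(u)\,\diff u+\int_{T_1}^{T_2}(\Phi_0^2-\Phi^2)(u,T_2)\,\sigma^2(u)\,\diff u$. Similarly, $m_0-m$ carries the weight $\Phi_0(u,T_2)$ on $b_{20}-b_2$, plus the term $\int_{T_1}^{T_2}(\Phi_0-\Phi)(u,T_2)\,b_2(u)\,\diff u$. Because $v$ is linear in $\sigma^2$ and $m$ is linear in $b_2$, what you discard is \emph{first order} in $\theta-\theta_0$, not a cubic or cross remainder. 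This includes $\int(\Phi_0^2-1)(\sigma_0^2-\sigma^2)$, which your formula for $v_0-v$ omits, and the $b_1$-induced integrals. After weighting by $(T_2-T_1)^{-1}$ or $(T_2-T_1)^{-2}$, these terms are of the same order as the terms you keep, with constants that are not small. So they cannot be shown to be a small multiple of $d^2$.

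Your proposed tools cannot repair this. Going through $\norm{\cdot}_\infty$ or $\norm{\cdot}_{L^2}$ and Lemma~\ref{lem:LowerBoundNorm} costs growing powers of $M$. Those reverse inequalities also hold only for finite expansions, whereas $\theta-\theta_0$ contains the whole tail of $\theta_0$. Compactness and identifiability only give a constant lower bound on $\{d(\theta,\theta_0)\ge\eta\}$, which leaves the local regime untouched, and that is where the difficulty already sits. The same issue affects your upper bound: uniform boundedness alone controls $S(\theta_0)-S(\theta_{0N})$ by sup or $L^2$ distances, not by $d^2(\theta_{0N},\theta_0)$.

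The missing idea is never to linearize. Instead, keep the exact differences and compare each weighted piece directly with the weak norms, absorbing the cross terms rather than neglecting them. The paper (Lemmas~\ref{lem:MeanDiff}, \ref{lem:VarDiff}, \ref{lem:Rs}) splits with $(x+y)^2\ge(1-\varepsilon^{-1})x^2+(1-\varepsilon)y^2$ and then shows three things:
\begin{enumerate}[label=(\roman*)]
\item $\expv[(\Phi_0-\Phi)^2(T_1,T_2)/(T_2-T_1)]\simeq\normw{b_1-b_{10}}^2$, by the mean value theorem.
\item The $b_1$-induced pieces $\expv[(T_2-T_1)^{-1}(\int_{T_1}^{T_2}(\Phi_0-\Phi)b_2)^2]$ and $\expv[(T_2-T_1)^{-2}(\int_{T_1}^{T_2}(\Phi_0^2-\Phi^2)\sigma^2)^2]$ are $\lesssim\normw{b_1-b_{10}}^2$, by Cauchy--Schwarz and a change of the order of integration.
\item The weights $\Phi_0,\Phi_0^2$ can be stripped by integration by parts, giving equivalence with $\normww{b_2-b_{20}}^2$ and $\normwww{\sigma^2-\sigma_0^2}^2$.
\end{enumerate}
The resulting negative multiples of $\normw{b_1-b_{10}}^2$ are absorbed, by taking $\varepsilon$ close to $1$, by the $b_1$-information carried by the conditional variance of $X_1$ on $\{X_1\le\delta_0\}$. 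That is your eigenvalue step, which is fine once applied to $(\Phi_0-\Phi)(T_1,T_2)$ and the full intercept. The bound holds globally on the constrained class, so neither linearization, nor a local/global split, nor compactness is needed.
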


\begin{proof}
The inequalities in Lemma~\ref{lem:KLlower} yield
\begin{equation*}
    S(\theta_{0N}) - S(\theta) = S(\theta_0) - S(\theta) + S(\theta_{0N}) - S(\theta_0) \geq c_2 d^2(\theta, \theta_0) - c_1d^2(\theta_{0N}, \theta_0).
\end{equation*}
The reverse triangle inequality implies
\begin{equation*}
    d(\theta,\theta_0) \geq d(\theta, \theta_{0N}) - d(\theta_0, \theta_{0N})\geq (1-c_0^{-1}) d(\theta, \theta_{0N}),
\end{equation*}
provided $d(\theta, \theta_{0N}) \geq c_0\, d(\theta_0, \theta_{0N})$ for some $c_0 > 1$. Combining the above, we obtain
\begin{equation*}
    S(\theta_{0N}) - S(\theta) \geq (c_2  (1-c_0^{-1})^2 - c_1 c_0^{-2}) d^2(\theta, \theta_{0N}).
\end{equation*}
Choosing $c_0$ large enough such that $c_2  (1-c_0^{-1})^2 > c_1 c_0^{-2}$ then finishes the proof.
\end{proof}

\begin{lemma}\label{lem:KLlower}
Consider the distance $d$ in \eqref{eq:distance}. There exist constants $c_1, c_2 > 0$ such that
\begin{equation*}
    c_1 d^2(\theta_0, \theta) \geq S(\theta_0) - S(\theta) \geq c_2 d^2(\theta_0, \theta).
\end{equation*}
\end{lemma}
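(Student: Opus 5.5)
The plan is to write $S(\theta_0)-S(\theta)$ explicitly as an expected Gaussian Kullback--Leibler divergence and then compare each piece with the three weighted norms. Conditionally on $(X_1,T_1,T_2)$, both $f_0$ and $f$ in \eqref{eq:KLdiv} are Gaussian densities. Write $m_0,v_0$ for the true transition mean and variance and $m,v$ for those under $\theta$; in this proof $v_0$ denotes the true transition variance, not the initial variance of Assumption~\ref{ass:X}. Then
\begin{equation*}
S(\theta_0)-S(\theta)=\tfrac12\,\expv\Bigl[\Bigl\{\log\tfrac{v}{v_0}+\tfrac{v_0}{v}-1+\tfrac{(m-m_0)^2}{v}\Bigr\}\ind\{X_1\le\delta_0\}\Bigr].
\end{equation*}
Assumption~\ref{ass:beta} with $\beta>1$ makes $b_1,b_2,\log\sigma$ uniformly bounded over $\Theta_N$. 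Hence $\Phi$ is bounded above and below, $\sigma^2$ is bounded away from $0$ and $\infty$, and by Proposition~\ref{prop:CondMeanVar} we get $v\simeq v_0\simeq T_2-T_1$. In particular $v/v_0$ lies in a compact subset of $(0,\infty)$. On such a set $g(r)=\log r+1/r-1\simeq (r-1)^2$. The variance part is therefore $\simeq\expv[(v-v_0)^2/(T_2-T_1)^2\,\ind\{X_1\le\delta_0\}]$, and the mean part is $\simeq\expv[(m-m_0)^2/(T_2-T_1)\,\ind\{X_1\le\delta_0\}]$.

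Next I linearise the differences, with $s=T_1$, $t=T_2$, $\Delta b_1=b_1-b_{10}$, $\Delta b_2=b_2-b_{20}$, $\Delta\sigma^2=\sigma^2-\sigma_0^2$. By the mean value theorem, $\Phi(s,t)-\Phi_0(s,t)=\kappa\int_s^t\Delta b_1$ with $\kappa$ bounded above and below. Similarly,
\begin{equation*}
m-m_0=\kappa X_1\int_s^t\Delta b_1+\int_s^t\Delta b_2+\rho_m,\qquad v-v_0=\int_s^t\Delta\sigma^2+\rho_v .
\end{equation*}
Here $\rho_m$ is a weighted integral of $\Delta b_2$ plus $\int\Delta b_1$ times an integral of $b_2$ over a subinterval, and $\rho_v$ collects the $\Phi^2-\Phi_0^2$ contribution, so that $|\rho_v|\lesssim (t-s)\int_s^t|\Delta b_1|$. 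I then condition on $(T_1,T_2)$ and integrate over $X_1\sim N(\mu(T_1),\var X(T_1))$, whose variance is bounded below because the initial variance in Assumption~\ref{ass:X} is positive. The quadratic form $(a,b)\mapsto\expv[(aX_1+b)^2\ind\{X_1\le\delta_0\}]$ has eigenvalues bounded above and away from zero, uniformly in $T_1\in[0,1]$, since $\prob(X_1\le\delta_0)$ is bounded below and $X_1$ is non-degenerate on that event. Integrating against the uniform density $2\,\ind\{t_1<t_2\}$ of $(T_1,T_2)$ then gives the upper bound $c_1d^2$ directly. It uses $(a+b+c)^2\le3(a^2+b^2+c^2)$ and the bounds on the remainders, and controls the $\rho_v$ term through $\normw{\cdot}$ and Cauchy--Schwarz applied to $\bigl(\int_s^t|\Delta b_1|\bigr)^2/(t-s)$.

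The main obstacle is the lower bound, where the remainders $\rho_m,\rho_v$ couple the three components and could cancel the leading terms. I would first try a triangular decoupling. In $v-v_0$ the drift contribution carries an extra factor $(t-s)$, so it is of strictly lower order in the $\normwww{\cdot}$-scale, and I would absorb it via $(a+b)^2\ge a^2/2-b^2$. The error this produces is then dominated by a small multiple of the mean term, which already controls $\normw{\Delta b_1}^2$. For the mean term, $\rho_m$ is also of higher order in $(t-s)$ relative to $\int_s^t\Delta b_1$ and $\int_s^t\Delta b_2$; I would handle it in the same way and then appeal to the uniform positive definiteness above, which separates $\normw{\Delta b_1}$ from $\normww{\Delta b_2}$. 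If this absorption fails for long intervals $t-s$ of order one, I would fall back on a compactness argument. The map $\theta\mapsto S(\theta_0)-S(\theta)$ vanishes only at $\theta_0$, by identifiability of the Gaussian transition family from the pair $(m,v)$ as a function of $(x,s,t)$. Together with the linearisation valid near $\theta_0$, this should give $S(\theta_0)-S(\theta)\ge c_2d^2(\theta_0,\theta)$ on the bounded parameter set, using that $d$ is dominated by the $L^\infty$-bounded parameters.
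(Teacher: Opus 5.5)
Your architecture matches the paper's: the Gaussian Kullback--Leibler split, $g(r)\simeq(r-1)^2$ with $v\simeq v_0\simeq T_2-T_1$, non-degeneracy of $X_1$ on $\{X_1\le\delta_0\}$ to separate the coefficient of $X_1$, and a triangular decoupling in which the mean term controls $\normw{\Delta b_1}$. The genuine gap is the step where you replace $\int_s^t\Phi_0(u,t)\Delta b_2(u)\,\diff u$ by $\int_s^t\Delta b_2$, and $\int_s^t\Phi_0^2(u,t)\Delta\sigma^2(u)\,\diff u$ by $\int_s^t\Delta\sigma^2$, and then absorb the difference. Your formula for $v-v_0$ actually drops the term $\int_s^t(\Phi_0^2(u,t)-1)\Delta\sigma^2(u)\,\diff u$, because your $\rho_v$ contains only the $\Phi^2-\Phi_0^2$ part.

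The difference $\int_s^t(\Phi_0(u,t)-1)\Delta b_2(u)\,\diff u$ is linear in $\Delta b_2$, with a coefficient fixed by $b_{10}$, so it is not small near $\theta_0$. Saying it is ``higher order in $t-s$'' only yields the bound $(t-s)\int_s^t\abs{\Delta b_2}$, which the weak norm cannot control. For $\Delta b_2\propto\varphi_k$ one has $\normww{\varphi_k}^2\lesssim k^{-2}\log k$, while $\expv[(T_2-T_1)(\int_{T_1}^{T_2}\abs{\varphi_k})^2]$ stays of order one. An integration by parts does bound this remainder by $\normww{\Delta b_2}$, but only with a constant of order $\norm{b_{10}}_\infty$, not a small one. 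That is exactly the failure you anticipate for long intervals.

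The same loss of sign affects $\rho_v$. From $\abs{\rho_v}\lesssim(t-s)\int_s^t\abs{\Delta b_1}$, Cauchy--Schwarz leads to $\norm{\Delta b_1}_{L^2}$, not $\normw{\Delta b_1}$. You need the signed bound $\abs{\Phi^2(u,t)-\Phi_0^2(u,t)}\lesssim\abs{\int_u^t\Delta b_1}$ followed by Fubini, as in the paper's $R_2$ and $R_4$. This already matters for your upper bound $c_1d^2$.

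The compactness fallback does not close the gap. Identifiability plus sup-norm compactness of the class in Assumption~\ref{ass:beta} gives $S(\theta_0)-S(\theta)\ge c(\eta)>0$ on $\{d\ge\eta\}$. Near $\theta_0$, however, the linearised quadratic form $Q$ is built from precisely these weighted integrals, and identifiability only gives $Q(h)>0$ for $h\neq0$. On an infinite-dimensional class this does not yield $\inf_h Q(h)/d^2(h)>0$; normalised directions such as $\varphi_k/\normww{\varphi_k}$ are unbounded in sup norm.

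The missing idea is in Lemma~\ref{lem:Rs}, terms $R_3$ and $R_5$: prove the comparison in the reverse direction, unweighted $\lesssim$ weighted, where no smallness is required.
\begin{itemize}
\item Write $\int_{T_1}^{T_2}\Delta b_2=\int_{T_1}^{T_2}\Phi_0(u,T_2)^{-1}\Phi_0(u,T_2)\Delta b_2(u)\,\diff u$ and integrate by parts.
\item The semigroup identity $\Phi_0(v,T_2)=\Phi_0(v,u)\Phi_0(u,T_2)$ turns the remainder into $\int_{T_1}^{T_2}b_{10}(u)\int_{T_1}^{u}\Phi_0(v,u)\Delta b_2(v)\,\diff v\,\diff u$. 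This is made of the same weighted increments, now on the subintervals $[T_1,u]$.
\item Cauchy--Schwarz and Fubini bound its contribution to $\expv[(\cdot)^2/(T_2-T_1)]$ by $\norm{b_{10}}_\infty^2R_3$, where $R_3=\expv[(T_2-T_1)^{-1}(\int_{T_1}^{T_2}\Phi_0(u,T_2)\Delta b_2(u)\,\diff u)^2]$.
\end{itemize}
Hence $\normww{\Delta b_2}^2\lesssim R_3$, and analogously $\normwww{\Delta\sigma^2}^2\lesssim R_5$.

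With these in hand your decoupling goes through, provided you use $(x+y)^2\ge(1-\varepsilon^{-1})x^2+(1-\varepsilon)y^2$ with $\varepsilon$ close to $1$. The plain inequality $(a+b)^2\ge a^2/2-b^2$ leaves the cross term, bounded by $C\normw{\Delta b_1}^2$, with a coefficient that need not be dominated by what the mean term provides.
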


\begin{proof}

Inserting Gaussian densities into \eqref{eq:KLdiv}, one observes that
{\small
\begin{align*}
    &S(\theta_0) - S(\theta) \\
    &= 
    \frac{1}{2}
    \expv \left[\left(-\log \frac{v_0(T_1, T_2)}{v(T_1, T_2)} + \frac{v_0(T_1, T_2)}{v(T_1, T_2)} - 1+\frac{(m_0(X_1, T_1, T_2) - m(X_1, T_1, T_2))^2}{v(T_1, T_2)}\right)\ind\{X_1 \leq \delta_0\} \right].
\end{align*}}

\noindent
Note that $C_1^{-1} (T_2 - T_1) \leq v(T_1, T_2) \leq C_1(T_2-T_1)$ for some constant $C_1 >0$. The first inequality $d^2(\theta_0, \theta) \gtrsim S(\theta_0) - S(\theta)$ then follows from Lemma~\ref{lem:MeanDiffUpper} and Lemma~\ref{lem:VarDiffUpper} (note that $-\log(x) +x -1 \geq 0$ for $x > 0$). For the second one, we combine Lemma~\ref{lem:MeanDiff} and Lemma~\ref{lem:VarDiff} with the  existence of a constant $C_2 > 0$ such that $\prob(X_1 \leq \delta_0\vert T_1)\geq C_2$. This yields
\begin{align*}
     S(\theta_0) - S(\theta) &\gtrsim (C_1^{-1}c_3 +C_2c_4(1-\varepsilon^{-1})) \,\normw{b_{10}-b_1}^2 + C_1^{-1}c_3 \, \normww{b_{20}-b_2}^2\\
     & \qquad \qquad  + C_2c_5 (1-\varepsilon) \,\normwww{\sigma_0^2 - \sigma^2}^2,
\end{align*}
for arbitrary $\varepsilon \in (0,1)$. Choosing $\varepsilon$ such that $C_1^{-1}c_3 + C_2c_4(1-\varepsilon^{-1})>0$ finishes the proof of the lemma.
\end{proof}

\begin{remark}
The lemma above establishes tight bounds on a truncated version of the Kullback--Leibler divergence. It further implies that two randomly located censored measurements per independent replicate suffice to identify the model parameters.
\end{remark}

\subsection{Variance}

\begin{lemma}[Variance]\label{lem:Variance} Identify $\theta$ with $(b_1, b_2, \sigma)$. It holds that
\begin{align*}
\expv \left[ \sup_{\substack{\theta \in \Theta_N, \\ d(\theta,\theta_{0N}) \leq r}} \sqrt{N} \abs{(S_N-S)(\theta)-(S_N-S)(\theta_{0N})}\right] \lesssim  \phi_N(r),
\end{align*}
where $\phi_N(r) = \sqrt{M\log(M)} \,r\left(1 + \frac{\sqrt{M\log(M)}}{r \sqrt{N}}\right)$.
\end{lemma}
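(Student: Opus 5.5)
We bound the expected supremum of the centred empirical process indexed by $\Theta_N$ using a Bernstein-type maximal inequality for finite-dimensional classes, such as Lemma~3.4.3 of \citet{van2023weak} or the uniform-entropy bound of Theorem~2.14.1. Write $\ell_\theta(x_1,x_2,t_1,t_2) = \{-\log v_\theta(t_1,t_2) - (x_2 - m_\theta(x_1,t_1,t_2))^2/v_\theta(t_1,t_2)\}\ind\{x_1\le\delta_0\}$. Then $\sqrt{N}(S_N-S)(\theta) = \mathbb{G}_N \ell_\theta$. The object to control is the class $\mathcal{G}_r=\{\ell_\theta-\ell_{\theta_{0N}}\colon \theta\in\Theta_N,\ d(\theta,\theta_{0N})\le r\}$. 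Two ingredients are needed. The first is an $L^2(P)$ bound $\sigma_r^2:=\sup_{g\in\mathcal{G}_r}P g^2\lesssim r^2$. The second is an entropy bound reflecting the $d=3M$ dimensions of the sieve. With these, the maximal inequality $\expv\|\mathbb{G}_N\|_{\mathcal{G}_r}\lesssim J(\sigma_r)\bigl(1+J(\sigma_r)/(\sigma_r^2\sqrt{N})\,\|G\|\bigr)$ gives $\phi_N(r)=\sqrt{M\log M}\,r\bigl(1+\sqrt{M\log M}/(r\sqrt N)\bigr)$, provided $J(\sigma_r)\lesssim r\sqrt{M\log M}$ and the envelope is controlled up to a logarithm.

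\emph{Step 1 ($L^2$ bound).} Put $Z=(X_2-m_0)/\sqrt{v_0}$, which is standard normal given $(X_1,T_1,T_2)$. A direct expansion gives
\[
\ell_\theta-\ell_{\theta_{0N}} = -\log\frac{v_\theta}{v_{0N}} - Z^2 v_0\Bigl(\frac1{v_\theta}-\frac1{v_{0N}}\Bigr) - 2Z\sqrt{v_0}\Bigl(\frac{m_\theta-m_0}{v_\theta}-\frac{m_{0N}-m_0}{v_{0N}}\Bigr) - \Bigl(\frac{(m_\theta-m_0)^2}{v_\theta}-\frac{(m_{0N}-m_0)^2}{v_{0N}}\Bigr).
\]
All variances are comparable to $T_2-T_1$, uniformly over $\Theta_N$ (Assumption~\ref{ass:beta}). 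The conditional second moment is then bounded by $(v_\theta-v_{0N})^2/(T_2-T_1)^2 + (m_\theta-m_{0N})^2/(T_2-T_1)$, times constants. The bias terms $m_{0N}-m_0$ are small and bounded. Integrating over $(X_1,T_1,T_2)$, using Gaussian moments of $X_1$, yields precisely the squared quadratic forms behind Lemmas~\ref{lem:MeanDiffUpper} and \ref{lem:VarDiffUpper}. These are the upper bounds that already gave $S(\theta_0)-S(\theta)\lesssim d^2$ in Lemma~\ref{lem:KLlower}. Hence $Pg^2\lesssim d^2(\theta,\theta_{0N})\le r^2$.

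\emph{Step 2 (entropy).} The map $\theta\mapsto\ell_\theta$ is smooth in the coefficient vector $\vartheta\in\mathbb{R}^{3M}$, which lies in a box by Assumption~\ref{ass:beta}. Each coordinate derivative of $m_\theta,v_\theta$ is bounded by $\sup|\varphi_k|\lesssim1$ times $(T_2-T_1)$-factors. So $|\ell_\theta-\ell_{\theta'}|\le F\cdot\|\vartheta-\vartheta'\|_1$, with envelope $F\lesssim 1+Z^2+X_1^2$, and $\|\vartheta-\vartheta'\|_1\le\sqrt{M}\|\vartheta-\vartheta'\|_2$. Covering numbers therefore satisfy $N(\epsilon\|F\|,\mathcal{G}_r,L^2)\le (C\sqrt M/\epsilon)^{3M}$. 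This gives $J(\delta)=\int_0^\delta\sqrt{1+\log N}\lesssim \delta\sqrt{M\log(M/\delta)}$, which is of order $\delta\sqrt{M\log M}$ for $\delta\ge N^{-c}$. Smaller $r$ only enter through the additive term. The unbounded envelope (Gaussian in $Z$ and $X_1$) will be handled by truncation at level $\sqrt{\log N}$, or via Bernstein norms. The sub-exponential tails cost at most a logarithm, absorbed into the $\log M\simeq\log N$ factor.

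\emph{Step 3 (main obstacle).} The hard part is reconciling Step 2, which measures Lipschitz behaviour in Euclidean coefficient norm, with the weak norm $d$, which localizes $\mathcal{G}_r$. I would not localize the covering in $d$. Instead I would use the $L^2(P)$ radius $\sigma_r\lesssim r$ from Step 1 only as the upper limit of the entropy integral, and cover the whole $M$-dimensional box. Because the bracketing or uniform entropy is polynomial in $1/\epsilon$ with exponent $3M$, the integral up to $\sigma_r$ is $\sigma_r\sqrt{M\log(\sqrt M/\sigma_r)}$. The residual $\log(1/r)$ is harmless in the regime $r\ge\underline r_N\gtrsim N^{-1/2}$. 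Plugging into the maximal inequality, with the second term arising from $J^2(\sigma_r)/(\sigma_r^2\sqrt N)=M\log M/\sqrt N$, yields the stated $\phi_N$. It remains to check that $\phi_N(r)/r^\alpha$ is decreasing for some $\alpha\in(1,2)$.
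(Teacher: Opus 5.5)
Your strategy works if you commit to the Bernstein-norm bracketing inequality, which is the same maximal inequality the paper uses. The real difference from the paper is the entropy step.

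The paper localizes: it brackets the $d$-ball itself. It uses the reverse inequalities of Lemmas~\ref{lem:LowerBoundNorm} and~\ref{lem:LowerBoundNormII}, so that a $d$-ball of radius $r$ in the sieve lies in a coefficient ball of radius $\lesssim Mr$, together with $\norm{f}_\infty\lesssim\sqrt M\norm{f}_{L^2}$. This gives $\log N_{[\,]}(\varepsilon)\lesssim M\log(1+CrM^2/\varepsilon)$. Only $r/\varepsilon$ enters, so the entropy integral is $\lesssim r\sqrt{M\log M}$ for every $r>0$. The $\log M$ comes from the factor $M^2$, and no relation between $M$ and $N$ is needed.

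You instead bracket the whole coefficient box, which is legitimate because brackets for a superset also cover the subset. Your integral is then $r\sqrt{M\log(C\sqrt M/r)}$, and the $\log M$ appears only because $\log(1/r)\lesssim\log N\simeq\log M$. That uses $r\gtrsim N^{-1/2}$ and $M$ polynomial in $N$.

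So your route is shorter and needs no reverse norm inequalities. The paper's bound is uniform in $r$ and valid for any $M$. Since $M$ is a power of $N$ wherever the lemma is applied, your dependence on this is harmless.

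Three points need tightening.

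\emph{Truncation loses a logarithm.} Suppose you truncate the envelope at level $\asymp\log N$ and use the bounded-class inequality. Or suppose you use the local uniform-entropy inequality, whose additive term carries $\norm{\max_{i\le N}F_i}_{P,2}\asymp\log N$. Either way the second term is multiplied by $\log N$, giving $M\log M\log N/\sqrt N$. This is not absorbed into the existing $\log M$, and it would cost an extra logarithm in Theorem~\ref{thm:ConvergenceRate}. The Bernstein inequality has no such factor.

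\emph{The $L^2$ bound of Step~1 is not enough for the Bernstein route.} Every element of the class must have Bernstein norm $\lesssim r$. This follows from a product bound $\abs{\ell_\theta-\ell_{\theta_{0N}}}\lesssim F\,G(\theta,\theta_{0N};T_1,T_2)$, where:
\begin{itemize}
\item $G$ is uniformly bounded on $\Theta_N$ and satisfies $\expv[G^2]\lesssim d^2(\theta,\theta_{0N})$;
\item $F=1+Z^2+X_1^2$ has bounded conditional exponential moments after you multiply the class by a small constant $\kappa$.
\end{itemize}
The rescaling by $\kappa$ is necessary because $\expv e^{cZ^2}<\infty$ only for $c<1/2$. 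Your Step~1 expansion already produces this structure; it is exactly Lemmas~\ref{lem:FG} and~\ref{lem:G}. Your Lipschitz bound then gives brackets $\ell_{\theta_j}\pm\varepsilon F$ of Bernstein size $\lesssim\varepsilon$.

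\emph{$\underline r_N$ can be zero.} This happens, for instance, when $\theta_0$ lies in the sieve, so do not rely on $\underline r_N\gtrsim N^{-1/2}$. For $r<N^{-1/2}$, apply the inequality with Bernstein radius $N^{-1/2}$. That gives a bound $\lesssim M\log M/\sqrt N\le\phi_N(r)$.
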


\begin{proof}
We need to derive a bound for the empirical process indexed by the class of functions
\begin{equation*}
\mathcal{M}_{N, r}^\kappa = \left\lbrace \kappa (\ell_\theta - \ell_{\theta_{0N}})\colon \theta \in \Theta_N, d(\theta,\theta_{0N}) \leq r\right\rbrace,
\end{equation*}
where $\ell_{\theta}(X_1, X_2, T_1,T_2) = \log f_{\theta}(X_2\vert X_1, T_1, T_2) \,\ind\{X_1 \leq \delta_0\}$. Lemma~\ref{lem:FG} yields
\begin{equation}\label{eq:Lipschitz}
\abs*{\ell_\theta(X_1, X_2, T_1,T_2)-\ell_{\underline{\theta}}(X_1, X_2, T_1,T_2)} \leq F(X_1, X_2, T_1, T_2) \, G(\theta, \underline{\theta};T_1, T_2),
\end{equation}
for given functions $F$ and $G$. Let $C_1>\max\{G(\theta, \underline{\theta};T_1, T_2), 1\}$ and choose $\kappa > 0$ to be small enough such that
\begin{equation*}
\expv\left[\exp\left(\kappa C_1 F(X_1, X_2, T_1, T_2)\right) \,\big\vert \, T_1, T_2 \right] < C_2,
\end{equation*}
almost surely for some constant $C_2>0$. In the following, we consider the \textit{Bernstein distance} \cite[Section 2.14.2]{van2023weak} which is given by 
\begin{align*}
\norm{\ell}_{\text{B}} = \left(2 \, \expv\left[e^{\abs{\ell(X_1, X_2, T_1, T_2)}}-1-\abs{\ell(X_1, X_2, T_1, T_2)}\right]\right)^{1/2},
\end{align*}
 Using Lemma~\ref{lem:FG} and Lemma~\ref{lem:G}, we obtain
 \begin{equation}\label{eq:EstBernstein}
\begin{aligned}
\norm{\kappa (\ell_\theta - \ell_{\underline{\theta}})}^2_{\text{B}} &\lesssim
\expv\left[\sum_{k=2}^\infty \frac{\kappa^k F(X_1, X_2, T_1,T_2)^kG(\theta, \underline{\theta};T_1, T_2)^k}{k!}\right] \\
&\leq \expv\left[G(\theta, \underline{\theta};T_1, T_2)^2 \,\expv\left\lbrace \sum_{k=2}^\infty \frac{\kappa^kC_1^kF(X_1, X_2, T_1,T_2)^k}{k!}\, \Big\vert \, T_1, T_2\right\rbrace \right] \\
&\leq \expv\left[G(\theta, \underline{\theta};T_1, T_2)^2 \,\expv\left\lbrace \exp\left(\kappa C_1 F(X_1, X_2, T_1, T_2)\right) \,\Big\vert \, T_1, T_2 \right\rbrace \right] \\
&\leq C_2 \expv\left[G(\theta, \underline{\theta};T_1, T_2)^2 \right] \lesssim \,d^2(\theta,\underline{\theta}).
\end{aligned}
\end{equation}
This shows that the Bernstein norm is bounded by a multiple of the distance $d$. Theorem~2.4.18' of \cite{van2023weak} then yields
{\small\begin{equation*}
\expv \left[ \sup_{\substack{\theta \in \Theta_N, \\ d(\theta,\theta_{0N}) \leq r}} \sqrt{N} \abs{(S_N-S)(\theta)-(S_N-S)(\theta_{0N})}\right]
\lesssim \tilde{J}_{[\,]}(r, \mathcal{M}_{N,r}^\kappa, \norm{\cdot}_{\text{B}}) \left(1+\frac{\tilde{J}_{[\,]}(r, \mathcal{M}_{N,r}^\kappa, \norm{\cdot}_{\text{B}})}{r^2\sqrt{N}}\right),
\end{equation*}}
where
\begin{equation*}
\tilde{J}_{[\,]}(r,  \mathcal{F}, \norm{\cdot}) = \int_0^r \sqrt{1 + \log N_{[\,]}(\varepsilon,\mathcal{F}, \norm{\cdot})} \, \diff \varepsilon,
\end{equation*}
denotes the \textit{bracketing integral} of a class $\mathcal{F}$ with respect to a norm $\norm{\cdot}$ \citep[p.~132, p.~338]{van2023weak}. We need to bound the \textit{bracketing number} $N_{[\,]}(\varepsilon , \mathcal{M}_{N,r}^\kappa, \norm{\cdot}_{\text{B}})$ of the class~$\mathcal{M}_{N,r}^\kappa$ relative to the Bernstein norm. Consider brackets $[b_{1L}, b_{1U}]$, $[b_{2L}, b_{2U}]$, and $[\sigma_{L}, \sigma_{U}]$ of $\norm{\cdot}_\infty$-size $\varepsilon/3$ which implies
$\norm{b_{1U} - b_{1L}}_\infty + \norm{b_{2U}-b_{2L}}_\infty + \norm{\sigma_U - \sigma_L}_\infty < \varepsilon$. Using~\eqref{eq:Lipschitz} and Lemma~\ref{lem:G}, we construct a corresponding bracket for $\kappa\ell_\theta$ by
\begin{equation*}
    \kappa\ell_{\theta_L}(X_1, X_2, T_1, T_2) \pm \, \varepsilon\kappa F(X_1, X_2, T_1, T_2),
\end{equation*}
where $\theta_L = (b_{1L}, b_{2L}, \sigma_L)$. This bracket covers a function $\kappa \ell_\theta$ if
\begin{equation*}
\theta \in [b_{1L}, b_{1U}] \times [b_{2L}, b_{2U}] \times 
[\sigma_{L}, \sigma_{U}].
\end{equation*}
Using similar arguments as in~\eqref{eq:EstBernstein}, one can show that the $\norm{\cdot}_{\text{B}}$-size of the brackets is bounded by $C_3\varepsilon$ for some constant $C_3 > 0$\footnote{Note that we may safely assume that $\varepsilon$ is bounded by a constant, since for large $\varepsilon$, a single bracket suffices to cover the parameter space, yielding a negligible contribution of $O(r)$ to the bracketing integral.}. Therefore, an upper bound for $N_{[\,]}(C_3 \varepsilon, \mathcal{M}_{N,r}^\kappa, \norm{\cdot}_B)$ is given by
\begin{equation*}
        N_{[\,]}(\varepsilon, B_{N,r}^{b_1}, \norm{\cdot}_\infty) \times N_{[\,]}(\varepsilon, B_{N,r}^{b_2}, \norm{\cdot}_\infty) \times N_{[\,]}(\varepsilon, B_{N,r}^{\sigma}, \norm{\cdot}_\infty),
\end{equation*}
where $B_{N,r}^{\sigma} = \{\sigma = \exp\big(\sum_{k=1}^M \sigma_k \varphi_k\big)\colon \normwww{\sigma^2-\sigma_{0N}^2} \leq r\}$ and $B^{b_j}_{N,r}$ are defined correspondingly. Furthermore, $N_{[\,]}(\varepsilon, B_{N,r}^{\sigma}, \norm{\cdot}_\infty)$ denotes the $\varepsilon$-bracketing number of the ball $B_{N,r}^{\sigma}$ with respect to the supremum norm~$\norm{\cdot}_\infty$. To obtain the required bounds, we proceed via a volumetric argument. Using Lemma~\ref{lem:LowerBoundNorm} and Lemma~\ref{lem:LowerBoundNormII} alongside a Lipschitz condition, we bound the bracketing numbers by the covering numbers of Euclidean balls. Applying the volumetric bound from Corollary~4.2.11 in \cite{vershynin2018high} then yields
\begin{align*}
    \log N_{[\,]}(\varepsilon , \mathcal{M}_{N,r}^\kappa, \norm{\cdot}_{\text{B}}) \leq C_5 M \log\left(1+\frac{C_4r M^2}{\varepsilon}\right), && \varepsilon > 0,
\end{align*}
for some constants $C_4,C_5 > 0$. 
This finally implies
\begin{equation*}
    \tilde{J}_{[\,]}(r, \mathcal{M}_{N,r}^\kappa, \norm{\cdot}_\textup{B}) \leq \int_0^r \sqrt{1 + C_5 M \log\left(1+\frac{C_4r M^2}{\varepsilon}\right)} \, \diff \varepsilon \lesssim  \sqrt{M\log(M)} \,r ,
\end{equation*}
which concludes the proof of the lemma.
\end{proof}

\subsection{Auxiliary results}

\begin{lemma}\label{lem:MeanDiff}
    There exists some $c_3 > 0$ such that
    \begin{equation*}
    \expv \left[\frac{(m_0(X_1, T_1, T_2) - m(X_1, T_1, T_2))^2}{T_2-T_1} \ind\{X_1 \leq \delta_0\}\right] \geq c_3 \left(\normw{b_{10}-b_1}^2  + \normww{b_{20}-b_2}^2\right).
    \end{equation*}
\end{lemma}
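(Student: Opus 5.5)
The plan is to work conditionally on $(T_1,T_2)=(s,t)$ and to write the mean difference as an affine function of $X_1$. By Proposition~\ref{prop:CondMeanVar},
\[
m_0(x,s,t)-m(x,s,t)=A(s,t)\,x+B(s,t),
\]
with $A=\Phi_0(s,t)-\Phi(s,t)$ and $B=\int_s^t\{\Phi_0(u,t)b_{20}(u)-\Phi(u,t)b_2(u)\}\,\diff u$.

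Assumption~\ref{ass:beta} gives a uniform sup-norm bound on all admissible drifts, so $\Phi$ and $\Phi_0$ are bounded above and below uniformly. The mean value theorem applied to the exponential then gives $|A(s,t)|\simeq|\int_s^t(b_{10}-b_1)(u)\,\diff u|$.

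The first step removes $X_1$. Conditionally on $T_1=s$, $X_1\sim N(\mu_0(s),\var(X_0(s)))$. Here $\mu_0$ is bounded, and $\var(X(s))\ge \Phi_0(0,s)^2v_0$ is bounded below by Assumption~\ref{ass:X}. Hence the $2\times2$ matrix $\expv[(1,X_1)^\top(1,X_1)\ind\{X_1\le\delta_0\}\mid T_1=s]$ is positive definite. Its smallest eigenvalue is a continuous function of the truncated-normal parameters, which range over a compact set with variance bounded away from zero, so it is bounded below by some $\lambda>0$ uniformly in $s$. Since $X_1$ is independent of $T_2$ given $T_1$, this yields
\[
\expv\Bigl[\frac{(AX_1+B)^2}{T_2-T_1}\ind\{X_1\le\delta_0\}\Bigr]\ \ge\ \lambda\,\expv\Bigl[\frac{A(T_1,T_2)^2+B(T_1,T_2)^2}{T_2-T_1}\Bigr].
\]
The $A$-part already gives $\gtrsim\normw{b_{10}-b_1}^2$, because the uniform density of the ordered pair reproduces exactly the double integral defining $\normw{\cdot}$.

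The remaining, and main, step is the $B$-part. Split
\[
B=\int_s^t\Phi(u,t)(b_{20}-b_2)(u)\,\diff u+\int_s^t(\Phi_0-\Phi)(u,t)\,b_{20}(u)\,\diff u .
\]
The first term carries the weight $\Phi(u,t)=\Phi(0,t)/\Phi(0,u)$. With $G(r)=\int_0^r(b_{20}-b_2)/\Phi(0,\cdot)$, it equals $\Phi(0,t)\{G(t)-G(s)\}$. I would compare the weighted norm of increments of $G$ with $\normww{b_{20}-b_2}$ by integrating by parts against the smooth, bounded, bounded-below weight $\Phi(0,\cdot)$. This should show that the two quadratic functionals are equivalent up to a lower-order term of size $(t-s)\sup_{[s,t]}|G-G(s)|$, which is controlled by the same norm through a Hardy-type inequality.

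The cross term $\int_s^t(\Phi_0-\Phi)(u,t)b_{20}(u)\,\diff u$ is bounded by $(t-s)\sup_{u\in[s,t]}|\int_u^t(b_{10}-b_1)|$. The plan is to absorb it using $(x+y)^2\ge(1-\varepsilon)x^2-(\varepsilon^{-1}-1)y^2$ together with a bound of it by a constant times the $A$-type quantity $\normw{b_{10}-b_1}^2$. This is the same $\varepsilon$-splitting used in the proof of Lemma~\ref{lem:KLlower}.

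I expect this last step to be the real obstacle: showing that the nonlocal, weighted functionals of $b_{20}-b_2$ and $b_{10}-b_1$ that appear in $B$ are dominated from below by the clean $\normww{\cdot}$ and $\normw{\cdot}$ quadratic forms uniformly over the sieve, rather than only locally. If the global comparison fails, I would first prove the inequality for $\theta$ in a sup-norm neighbourhood of $\theta_0$ by linearisation. Outside that neighbourhood I would use identifiability: the conditional mean determines $\Phi$ and the weighted $b_2$-integral on all intervals. A compactness argument over the uniformly bounded coefficient class would then give a positive lower bound.
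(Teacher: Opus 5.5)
Your handling of the $X_1$-dependence is sound and slightly tidier than the paper's. A uniform lower bound on the smallest eigenvalue of $\expv[(1,X_1)^\top(1,X_1)\ind\{X_1\le\delta_0\}\mid T_1]$ does in one step what the paper does by centring at $\expv[X_1\mid X_1\le\delta_0,T_1]$ and splitting with $(x+y)^2\ge(1-\varepsilon^{-1})x^2+(1-\varepsilon)y^2$. The $A$-part also matches the paper, as does the architecture of the $B$-part: isolate the $b_{20}-b_2$ term, integrate by parts against the weight, and absorb the cross term into the $A$-part with $\varepsilon$ near $1$.

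The gap lies in the two estimates this architecture rests on, namely the paper's $R_2\lesssim\normw{b_{10}-b_1}^2$ and $R_3\gtrsim\normww{b_{20}-b_2}^2$ from Lemma~\ref{lem:Rs}. You bound both the cross term and the integration-by-parts remainder by $(t-s)$ times a supremum over $u\in[s,t]$. You then appeal to a ``Hardy-type inequality'' of the form $\expv[(T_2-T_1)\sup_{u\in[T_1,T_2]}(F(T_2)-F(u))^2]\lesssim\normw{F'}^2$, with $F=\int_0^{\cdot}(b_{10}-b_1)$, and analogously for $G$. No such inequality holds. Let $F$ be a bump of height $h$ and width $\eta$ around $1/2$. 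Every pair with $T_1<1/4$, $T_2>3/4$ contributes $\asymp h^2$, so the left side is $\gtrsim h^2$. On the other hand, $(F(t)-F(s))^2$ vanishes unless an endpoint lies in the bump, which gives $\normw{F'}^2\lesssim h^2\eta\log(1/\eta)$. So no constant uniform over the sieve, which resolves bumps of width $\sim 1/M$, can exist.

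Your fallback does not avoid this. Linearising at $\theta_0$ leaves the cross term at first order, so the local quadratic form poses exactly the same comparison problem. Compactness only handles parameters at a fixed sup-norm distance from $\theta_0$.

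The missing step is to keep the inner variable in $L^2$ rather than $L^\infty$. By Cauchy--Schwarz and the mean value theorem, the cross term $y(s,t)=\int_s^t(\Phi_0-\Phi)(u,t)\,b_{20}(u)\,\diff u$ satisfies $y^2/(t-s)\lesssim\int_s^t\bigl(\int_u^t(b_{10}-b_1)\bigr)^2\,\diff u$. Likewise, the remainder $r(s,t)=\int_s^t b_1(u)\Phi(0,u)\{G(u)-G(s)\}\,\diff u$ satisfies $r^2/(t-s)\lesssim\int_s^t\{G(u)-G(s)\}^2\,\diff u$. Take expectations over the ordered uniform pair and integrate out the free endpoint by Fubini. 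This produces a factor $u\le 1$ in the first case and $1-u\le 1$ in the second. Since all interval lengths are at most $1$, the results are bounded by a constant times $\normw{b_{10}-b_1}^2$ and by $\expv[\{G(T_2)-G(T_1)\}^2/(T_2-T_1)]$, respectively. These are exactly the bounds on $R_2$ and $R_3$ that the paper proves, and with them your $\varepsilon$-absorption goes through.
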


\begin{proof}
The numerator is given by
\begin{equation*}
    \begin{aligned}
    &m_0(X_1, T_1, T_2) - m(X_1, T_1, T_2) \\
    &\qquad = X_1 (\Phi_0(T_1, T_2) - \Phi(T_1, T_2)) +\int_{T_1}^{T_2} \{\Phi_0(u, T_2) b_{20}(u) - \Phi(u, T_2) b_2(u) \} \,\diff u.
\end{aligned}
\end{equation*}
Centering the above expression using $\mu_{\delta_0}(T_1) = \expv[X_1\vert X_1 \leq {\delta_0}, T_1]$, we obtain
\begin{align*}
    &m_0(X_1, T_1, T_2) - m(X_1, T_1, T_2) = (X_1-\mu_{\delta_0}(T_1)) (\Phi_0(T_1, T_2) - \Phi(T_1, T_2)) \\
    &\qquad + \mu_{\delta_0}(T_1) (\Phi_0(T_1, T_2) - \Phi(T_1, T_2)) +\int_{T_1}^{T_2} \{\Phi_0(u, T_2) b_{20}(u) - \Phi(u, T_2) b_2(u) \} \,\diff u.
\end{align*}
Squaring the above and taking the conditional expectation with respect to~$(X_1 \leq {\delta_0}, T_1, T_2)$, the cross-term involving $(X_1-\mu_{\delta_0}(T_1))$ vanishes. Since $\var(X_1\vert X_1 \leq \delta_0, T_1)>C_1$ for some constant $C_1 > 0$, we obtain
\begin{align*}
    &\expv \left[(m_0(X_1, T_1, T_2) - m(X_1, T_1, T_2))^2 \, \big\vert X_1 \leq \delta_0, T_1, T_2 \right] \geq C_1  (\Phi_0(T_1, T_2) - \Phi(T_1, T_2))^2 \\
    &\quad + \left( \mu_{\delta_0}(T_1) (\Phi_0(T_1, T_2) - \Phi(T_1, T_2)) +\int_{T_1}^{T_2} \{\Phi_0(u, T_2) b_{20}(u) - \Phi(u, T_2) b_2(u) \} \,\diff u \right)^2.
\end{align*}
As a next step, we apply the inequality 
\begin{align}\label{iq:PeterPaul}
(x + y)^2 \geq (1-\varepsilon^{-1}) x^2 + (1-\varepsilon)y^2, && \varepsilon \in (0,1),
\end{align}
to obtain a lower bound for the second summand. Since there exist constants $C_2, C_3 >0$ such that $\mu_{\delta_0}(T_1)^2 \leq C_2 $ and $\prob(X_1 \leq \delta_0 \vert T_1) \geq C_3$, we obtain
\begin{equation*}
    \expv \left[\frac{(m_0(X_1, T_1, T_2) - m(X_1, T_1, T_2))^2}{T_2-T_1}\,\ind\{X_1 \leq \delta_0\}\right]  \geq C_3 \big(C_1 + C_2(1-\varepsilon_1^{-1})\big)\, R_1 + C_3(1-\varepsilon_1)\, R_{23},
\end{equation*}
for the terms
\begin{align*}
    R_1 &= \expv\left[\frac{1}{T_2 - T_1}(\Phi_0(T_1, T_2) - \Phi(T_1, T_2))^2\right], \\
    R_{23} &= \expv \left[\frac{1}{T_2 - T_1} \left(\int_{T_1}^{T_2} \{\Phi_0(u, T_2) b_{20}(u) - \Phi(u, T_2) b_2(u) \} \,\diff u\right)^2\right],
\end{align*}
which are treated in Lemma~\ref{lem:Rs}. An appropriate choice of $\varepsilon_1\in (0,1)$ close to 1 yields a constant $C_4>0$ such that $C_3 \big(C_1 + C_2(1-\varepsilon_1^{-1})\big) \geq C_4$ and $C_3(1-\varepsilon_1) \geq C_4$. Thus,
\begin{equation}\label{eq:BiasMean}
    \expv \left[\frac{(m_0(X_1, T_1, T_2) - m(X_1, T_1, T_2))^2}{T_2-T_1}\,\ind\{X_1 \leq \delta_0\}\right]  \geq C_4 (R_1 + R_{23}).
\end{equation}
Lemma~\ref{lem:Rs} yields a constant $C_5>0$ such that $R_1 \geq C_5 \normw{b_{10}-b_1}^2$. Concerning $R_{23}$, we apply inequality \eqref{iq:PeterPaul} for some appropriate choice of $\varepsilon_2 \in (0,1)$ to obtain
\begin{equation}\label{eq:BiasMeanII}
    R_{23} \geq  (1-\varepsilon_2^{-1})\,R_2 + (1-\varepsilon_2) \,R_3,
\end{equation}
where
\begin{align*}
    R_2 &= \expv\left[\frac{1}{T_2 - T_1}\left(\int_{T_1}^{T_2} \{\Phi_0(u, T_2) - \Phi(u,T_2)\}b_{2}(u)\,\diff u\right)^2\right], \\
    R_3 &= \expv\left[\frac{1}{T_2 - T_1}\left(\int_{T_1}^{T_2} \Phi_0(u, T_2) \{b_{20}(u)- b_{2}(u)\}\,\diff u\right)^2\right].
\end{align*}
Lemma~\ref{lem:Rs} yields constants $C_6, C_7 > 0$ such that $R_2 \leq C_6 \normw{b_{10}-b_1}^2$ and $R_3 \geq C_7\normww{b_{20}-b_2}^2$. Substituting the bounds for $R_1$, $R_2$, and $R_3$ back into \eqref{eq:BiasMean} and \eqref{eq:BiasMeanII}, we get
\begin{align*}
    &\expv \left[\frac{(m_0(X_1, T_1, T_2) - m(X_1, T_1, T_2))^2}{T_2-T_1}\ind\{X_1 \leq \delta_0\}\right]\\
    &\qquad \qquad \geq C_4 \left( C_5 + C_6(1-\varepsilon_2^{-1}) \right)\normw{b_{10}-b_1}^2 + C_4 C_7(1-\varepsilon_2) \normww{b_{20}-b_2}^2.
\end{align*}
Choosing $\varepsilon_2 \in (0,1)$ such that $C_4(C_5+C_6(1-\varepsilon_2^{-1})) > c_3$ and $C_4 C_7 (1-\varepsilon_2) > c_3$ finishes the proof.
\end{proof}

\begin{lemma}\label{lem:MeanDiffUpper}
    It holds that
    \begin{equation*}
    \expv \left[\frac{(m_0(X_1, T_1, T_2) - m(X_1, T_1, T_2))^2}{T_2-T_1}\right] \lesssim \normw{b_{10}-b_1}^2  + \normww{b_{20}-b_2}^2.
    \end{equation*}
\end{lemma}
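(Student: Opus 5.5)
The plan mirrors the decomposition used in the proof of Lemma~\ref{lem:MeanDiff}, but only needs upper bounds, so no centering and no use of the truncation $\ind\{X_1\le\delta_0\}$ is required. Starting from the explicit form of the transition mean in Proposition~\ref{prop:CondMeanVar}, I would write
\begin{align*}
m_0(X_1,T_1,T_2)-m(X_1,T_1,T_2) &= X_1\bigl(\Phi_0(T_1,T_2)-\Phi(T_1,T_2)\bigr) + \int_{T_1}^{T_2}\{\Phi_0(u,T_2)-\Phi(u,T_2)\}b_2(u)\,\diff u\\
&\quad + \int_{T_1}^{T_2}\Phi_0(u,T_2)\{b_{20}(u)-b_2(u)\}\,\diff u,
\end{align*}
and apply $(x+y+z)^2\le 3(x^2+y^2+z^2)$. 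Conditioning on $(T_1,T_2)$, which are independent of $X$, the first term contributes $\expv[X_1^2\vert T_1]$ times $(\Phi_0-\Phi)^2(T_1,T_2)$. By Assumption~\ref{ass:X} and Proposition~\ref{prop:CondMeanVar}, together with the uniform boundedness of the coefficients implied by Assumption~\ref{ass:beta}, $\expv[X_1^2\vert T_1]$ is bounded uniformly in $T_1\in[0,1]$. This reduces the left-hand side to a constant times $R_1+R_2+R_3$, in the notation of the proof of Lemma~\ref{lem:MeanDiff}.

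Each term would then be bounded by the appropriate weighted norm.

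For $R_1$: since $\Phi(s,t)=\exp(\int_s^t b_1)$ and all admissible $b_1$ are uniformly bounded (so $\abs{\int_{T_1}^{T_2}b_1}\le C$), the mean value theorem for $\exp$ gives
\[
\abs{\Phi_0(T_1,T_2)-\Phi(T_1,T_2)} \lesssim \Bigl|\int_{T_1}^{T_2}(b_{10}-b_1)\Bigr|.
\]
Because $(T_1,T_2)$ is the ordered pair of two uniform variables, with density $2$ on $\{t_1<t_2\}$, it follows that $R_1\lesssim \normw{b_{10}-b_1}^2$. This is presumably the upper half of Lemma~\ref{lem:Rs}, and I would cite that lemma if it states this bound.

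For $R_2$: the same mean value argument gives $\abs{\Phi_0(u,T_2)-\Phi(u,T_2)}\lesssim \abs{\int_u^{T_2}(b_{10}-b_1)}$, and $b_2$ is bounded. The bound $R_2\lesssim\normw{b_{10}-b_1}^2$ is already asserted as Lemma~\ref{lem:Rs} in the proof of Lemma~\ref{lem:MeanDiff}, so I would invoke it.

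For $R_3$: write $g=b_{20}-b_2$ and $G(u)=\int_{T_1}^u g$. Integration by parts gives
\[
\int_{T_1}^{T_2}\Phi_0(u,T_2)g(u)\,\diff u = G(T_2) + \int_{T_1}^{T_2}\Phi_0(u,T_2)\,b_{10}(u)\,G(u)\,\diff u .
\]
The first piece contributes exactly $\normww{g}^2$ after dividing by $T_2-T_1$ and integrating. For the second piece, Cauchy--Schwarz together with $\abs{\Phi_0 b_{10}}\lesssim1$ bounds it by $(T_2-T_1)\int_{T_1}^{T_2}G(u)^2\,\diff u$. After dividing by $T_2-T_1$ and integrating over $t_1<t_2$, this becomes
\[
\int_0^1\!\!\int_{t_1}^1\!\Bigl(\int_{t_1}^u g\Bigr)^2 (1-u)\,\diff u\,\diff t_1 \;\le\; \int_0^1\!\!\int_{t_1}^1\!\Bigl(\int_{t_1}^u g\Bigr)^2\diff u\,\diff t_1 .
\]
Since $u-t_1\le1$, this is at most $\int\!\!\int (u-t_1)^{-1}(\int_{t_1}^u g)^2 = \normww{g}^2$.

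The main obstacle is this last integration-by-parts step: the weight $\Phi_0(u,T_2)$ must be removed without losing a factor $(T_2-T_1)^{-1}$, and the remainder term must be controlled by the same weak norm $\normww{\cdot}$ rather than an $L^2$ norm. If the Fubini bookkeeping fails, I would fall back on showing the remainder is dominated by $\int_0^1\int_0^{t}(\int_s^t g)^2\,\diff s\,\diff t$, which is at most $\normww{g}^2$ because $t-s\le1$. Summing the three bounds then proves the lemma.
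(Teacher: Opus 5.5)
Your proposal is correct and follows essentially the paper's route. The paper proves this lemma by the same decomposition as in Lemma~\ref{lem:MeanDiff}, reducing to $R_1+R_2+R_3$ and invoking the upper bounds of Lemma~\ref{lem:Rs}; for the upper direction no centering or truncation is needed. Your explicit integration-by-parts argument for $R_3\lesssim\normww{b_{20}-b_2}^2$ is valid, and it spells out the ``similar arguments'' that the paper leaves implicit in Lemma~\ref{lem:Rs}.
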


\begin{proof}
    The proof is similar to the one of Lemma~\ref{lem:MeanDiff}.
\end{proof}

\begin{lemma}\label{lem:VarDiff}
    There exist $c_4, c_5 > 0$ such that
    \begin{equation*}
    \expv \left[-\log \frac{v_0(T_1, T_2)}{v(T_1, T_2)} + \frac{v_0(T_1, T_2)}{v(T_1, T_2)} - 1\right] \geq c_4 (1-\varepsilon^{-1}) \normw{b_{10}-b_1}^2 +c_5 (1-\varepsilon) \normwww{\sigma_0^2-\sigma^2}^2,
    \end{equation*}
    for any $\varepsilon \in (0,1)$.
\end{lemma}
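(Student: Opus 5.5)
The plan is to reduce the truncated Kullback--Leibler variance term to a weighted quadratic form in $v_0-v$, and then split $v_0-v$ into a diffusion part and a drift part with the Peter--Paul inequality \eqref{iq:PeterPaul}.

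\textbf{Step 1 (quadratic lower bound).} Set $g(x)=-\log x+x-1$. As in the proof of Lemma~\ref{lem:KLlower}, both $v_0(T_1,T_2)$ and $v(T_1,T_2)$ lie between $C_1^{-1}(T_2-T_1)$ and $C_1(T_2-T_1)$, uniformly over the parameter set constrained by Assumption~\ref{ass:beta}. Hence the ratio $x=v_0/v$ stays in a fixed compact subinterval of $(0,\infty)$. On such an interval $g(x)\ge c\,(x-1)^2$, and therefore
\[
\expv[g(v_0/v)] \gtrsim \expv\!\left[\frac{(v_0(T_1,T_2)-v(T_1,T_2))^2}{(T_2-T_1)^2}\right].
\]

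\textbf{Step 2 (splitting).} By Proposition~\ref{prop:CondMeanVar},
\[
v_0-v=\underbrace{\int_{T_1}^{T_2}\Phi_0^2(u,T_2)\{\sigma_0^2(u)-\sigma^2(u)\}\,\diff u}_{y}+\underbrace{\int_{T_1}^{T_2}\{\Phi_0^2(u,T_2)-\Phi^2(u,T_2)\}\sigma^2(u)\,\diff u}_{x}.
\]
Applying \eqref{iq:PeterPaul} gives
\[
(x+y)^2\ge(1-\varepsilon^{-1})x^2+(1-\varepsilon)y^2 .
\]
Since $1-\varepsilon^{-1}<0$, it then suffices to prove two bounds:
\[
\expv\bigl[x^2/(T_2-T_1)^2\bigr]\lesssim\normw{b_{10}-b_1}^2
\quad\text{and}\quad
\expv\bigl[y^2/(T_2-T_1)^2\bigr]\gtrsim\normwww{\sigma_0^2-\sigma^2}^2 .
\]

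\textbf{Step 3 (upper bound for the drift part).} Write $\Delta b=b_{10}-b_1$. Because the drift coefficients are uniformly bounded, the mean value theorem gives
\[
|\Phi_0^2(u,T_2)-\Phi^2(u,T_2)|\lesssim\Bigl|\int_u^{T_2}\Delta b\Bigr|.
\]
With $\sigma^2$ also bounded, Cauchy--Schwarz yields
\[
\frac{x^2}{(T_2-T_1)^2}\lesssim\frac{1}{T_2-T_1}\int_{T_1}^{T_2}\Bigl(\int_u^{T_2}\Delta b\Bigr)^2\diff u .
\]
Next take expectations; $(T_1,T_2)$ has density $2$ on the simplex. Integrating over $t_1\in[0,u]$ first produces $\log\bigl(t_2/(t_2-u)\bigr)$. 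Since $(t_2-u)\log\bigl(t_2/(t_2-u)\bigr)$ is bounded, this factor is $\lesssim (t_2-u)^{-1}$. The resulting double integral is exactly $\normw{\Delta b}^2$ in the variables $(u,t_2)$, which gives the required bound.

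\textbf{Step 4 (lower bound for the diffusion part; main obstacle).} Use $\Phi_0(u,t_2)=\Phi_0(0,t_2)/\Phi_0(0,u)$ to write
\[
y=\Phi_0(0,T_2)^2\int_{T_1}^{T_2}w(u)h(u)\,\diff u,\qquad w=\Phi_0(0,\cdot)^{-2},\quad h=\sigma_0^2-\sigma^2 .
\]
Here $w$ is smooth and bounded away from $0$ and $\infty$, and the prefactor $\Phi_0(0,T_2)^2$ is bounded below. What remains is the norm equivalence $\normwww{wh}\gtrsim\normwww{h}$.

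The first attempt will be integration by parts. With $H(u)=\int_u^{t_2}h$,
\[
\int_{t_1}^{t_2}wh=w(t_1)H(t_1)+\int_{t_1}^{t_2}w'(u)H(u)\,\diff u .
\]
The first term gives $\normwww{h}$ up to constants. The second is a lower-order term of size $\lesssim\sup_{u\in[t_1,t_2]}|H(u)|$, which carries an extra factor $(t_2-t_1)$ relative to the $\normwww{\cdot}$ weight, so it should be dominated by a weaker norm of $h$ of $\normww{\cdot}$ type.

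The delicate point is absorbing this remainder without losing the constant. I would handle it either with a further application of \eqref{iq:PeterPaul}, or by proceeding exactly as for the term $R_3$ in Lemma~\ref{lem:Rs}, which treats the analogous weight $\Phi_0(u,T_2)$ in the mean. If neither works directly, I would fall back on the comparison inequalities of Section~\ref{sec:Norms} restricted to the sieve space. Combining Steps 1--4 gives the stated bound with constants $c_4$ and $c_5$ independent of $\varepsilon$.
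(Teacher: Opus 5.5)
Steps 1--3 match the paper's proof exactly: the quadratic minorant of $-\log x+x-1$, the same split of $v_0-v$, and Peter--Paul. Your Step 3 is a correct verification of $R_4\lesssim\normw{b_{10}-b_1}^2$, including the $\log\bigl(t_2/(t_2-u)\bigr)$ weight.

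The gap is Step 4, the lower bound $R_5\gtrsim\normwww{\sigma_0^2-\sigma^2}^2$, which the paper takes from Lemma~\ref{lem:Rs}. Your integration by parts runs in the wrong direction. It writes the \emph{weighted} increment $\int_{t_1}^{t_2}wh$ as the unweighted one plus a remainder, so a lower bound requires \emph{subtracting} the remainder.

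That remainder is only controlled by a multiple of $\normww{h}$, with constant proportional to $\|w'/w\|_\infty=2\|b_{10}\|_\infty$. This constant is bounded but not small. Nor is $\normww{h}$ small relative to $\normwww{h}$: for $h\equiv1$ their squares are $1/6$ and $1/2$. So neither absorption nor a second Peter--Paul step closes the bound. Your last fallback fails too: the comparisons of Section~\ref{sec:Norms} lose powers of $M$ and require finite expansions, which $\sigma_0^2$ and $wh$ are not.

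The fix is to run your own identity the other way round. This is what the $R_3$ argument in Lemma~\ref{lem:Rs} actually does, and it involves no absorption. Set $g=wh$ and $G(u)=\int_u^{t_2}g$, and integrate by parts with multiplier $w^{-1}$:
\[
\int_{t_1}^{t_2}h(u)\,\diff u=w^{-1}(t_1)G(t_1)+\int_{t_1}^{t_2}(w^{-1})'(u)G(u)\,\diff u .
\]
Here $(w^{-1})'=2b_{10}\Phi_0(0,\cdot)^2$ is bounded.

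Square this, apply Cauchy--Schwarz to the second term, divide by $(t_2-t_1)^2$ and integrate over the simplex. The first term gives $\lesssim\normwww{g}^2$. The second gives $\lesssim\normww{g}^2\le\normwww{g}^2$ by exactly your Step 3 computation.

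Hence $\normwww{h}\lesssim\normwww{wh}$, and the remainder now sits on the upper-bound side, where it is simply added. Combined with your reduction $R_5\gtrsim\normwww{wh}^2$, this gives $R_5\gtrsim\normwww{\sigma_0^2-\sigma^2}^2$ with an $M$-independent constant. The rest of your argument then goes through.
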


\begin{proof}
For any $C_1>0$, there exists a constant $C_2$ such that $-\log(x) + x - 1 \geq C_2(x-1)^2$ for every $\abs{x} < C_1$. Since $\abs{v_0(T_1, T_2)/v(T_1, T_2)}< C_1$ by our assumptions, we obtain
\begin{equation*}
     \expv \left[-\log \frac{v_0(T_1, T_2)}{v(T_1, T_2)} + \frac{v_0(T_1, T_2)}{v(T_1, T_2)} - 1\right] \geq C_3  \, \expv \left[\frac{(v_0(T_1, T_2)-v(T_1, T_2))^2}{(T_2-T_1)^2}\right].
\end{equation*}
Rewriting the numerator, we obtain
\begin{align*}
    &v_0(T_1, T_2) - v(T_1, T_2) \\
    &\qquad = \int_{T_1}^{T_2} \{\Phi_0^2(u,T_2)-\Phi^2(u,T_2)\}\sigma^2(u)\, \diff u + \int_{T_1}^{T_2} \Phi_0^2(u,T_2)\{\sigma_0^2(u)-\sigma^2(u)\}\, \diff u.
\end{align*}
For $\varepsilon \in (0,1)$, the inequality in~\eqref{iq:PeterPaul} yields
\begin{equation*}
    \expv \left[\frac{(v_0(T_1, T_2)-v(T_1, T_2))^2}{(T_2-T_1)^2}\right] \geq  (1-\varepsilon^{-1}) \,R_4 + (1-\varepsilon)\,R_5,
\end{equation*}
where
\begin{align*}
    R_4 &= \expv \left[\frac{1}{(T_2-T_1)^2}\left(\int_{T_1}^{T_2} \{\Phi_0^2(u,T_2)-\Phi^2(u,T_2)\}\sigma^2(u)\, \diff u\right)^2\right], \\
   R_5 &= \expv \left[\frac{1}{(T_2-T_1)^2}\left(\int_{T_1}^{T_2} \Phi_0^2(u,T_2)\{\sigma_0^2(u)-\sigma^2(u)\}\, \diff u\right)^2\right].
\end{align*}
Invoking Lemma~\ref{lem:Rs}, there exist constants $C_4, C_5 > 0$ such that $R_4 \leq C_4 \, \normw{b_{10} - b_1}^2$
as well as $R_5 \geq C_5 \, \normwww{\sigma_0^2 - \sigma^2}^2$. Combining the above,
\begin{align*}
    &\expv \left[-\log \frac{v_0(T_1, T_2)}{v(T_1, T_2)} + \frac{v_0(T_1, T_2)}{v(T_1, T_2)} - 1\right] \\
    & \qquad \qquad \geq  C_3 C_4 (1-\varepsilon^{-1}) \normw{b_{10}-b_1}^2 +C_3 C_5 (1-\varepsilon) \normwww{\sigma_0^2-\sigma^2}^2,
\end{align*}
which finishes the proof of the lemma.
\end{proof}
\begin{lemma}\label{lem:VarDiffUpper}
    It holds that
    \begin{equation*}
    \expv \left[-\log \frac{v_0(T_1, T_2)}{v(T_1, T_2)} + \frac{v_0(T_1, T_2)}{v(T_1, T_2)} - 1\right] \lesssim \normw{b_{10}-b_1}^2 + \normwww{\sigma_0^2-\sigma^2}^2.
    \end{equation*}
\end{lemma}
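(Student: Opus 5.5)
We bound the expectation by splitting the variance difference in the same way as in the proof of Lemma~\ref{lem:VarDiff}, this time using the upper bound of the elementary inequality. On any compact subset of $(0,\infty)$ we have $-\log(x)+x-1 \le C(x-1)^2$. By Assumption~\ref{ass:beta}, $\norm{b_1}_\infty$, $\norm{\sigma}_\infty$ and $\norm{\sigma^{-1}}_\infty$ are uniformly bounded over the parameter class. Hence $\Phi$ is bounded above and below, and so is $\Phi_0$. Consequently $v(T_1,T_2)\simeq T_2-T_1$ and $v_0(T_1,T_2)\simeq T_2-T_1$, so the ratio $v_0/v$ lies in a fixed compact interval. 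This gives
\[
\expv\Big[-\log\tfrac{v_0}{v}+\tfrac{v_0}{v}-1\Big] \lesssim \expv\Big[\tfrac{(v_0(T_1,T_2)-v(T_1,T_2))^2}{v(T_1,T_2)^2}\Big]\lesssim \expv\Big[\tfrac{(v_0(T_1,T_2)-v(T_1,T_2))^2}{(T_2-T_1)^2}\Big].
\]

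Next we use the decomposition $v_0-v = \int_{T_1}^{T_2}\{\Phi_0^2(u,T_2)-\Phi^2(u,T_2)\}\sigma^2(u)\,\diff u + \int_{T_1}^{T_2}\Phi_0^2(u,T_2)\{\sigma_0^2(u)-\sigma^2(u)\}\,\diff u$ from the proof of Lemma~\ref{lem:VarDiff}. Together with $(x+y)^2\le 2x^2+2y^2$, this bounds the right-hand side by $2R_4+2R_5$, with $R_4,R_5$ as defined there.

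For $R_4$ we invoke the upper bound $R_4\lesssim\normw{b_{10}-b_1}^2$ from Lemma~\ref{lem:Rs}, exactly as it is used in Lemma~\ref{lem:VarDiff}.

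For $R_5$ we need the reverse (upper) bound $R_5\lesssim \normwww{\sigma_0^2-\sigma^2}^2$. This is the main point to verify. I would write $\Phi_0^2(u,T_2)=\Phi_0^2(T_1,T_2)^{-1}\exp\big(2\int_{T_1}^u b_{10}\big)$ and set $g=\sigma_0^2-\sigma^2$. Integrating by parts against $G(u)=\int_{T_1}^u g$ then gives
\[
\int_{T_1}^{T_2}\Phi_0^2(u,T_2)g(u)\,\diff u = G(T_2) - \int_{T_1}^{T_2} 2b_{10}(u)\Phi_0^2(u,T_2)G(u)\,\diff u .
\]
The first term, after division by $(T_2-T_1)^2$ and integration over the uniform design density on $\{t_1<t_2\}$, yields exactly $\normwww{g}^2$. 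For the second term, Cauchy--Schwarz gives a bound by $(T_2-T_1)\int_{T_1}^{T_2}G(u)^2\,\diff u$. Dividing by $(T_2-T_1)^2$ leaves $\frac{1}{T_2-T_1}\int_{T_1}^{T_2}\big(\int_{T_1}^u g\big)^2\diff u$. Integrating over $t_1<t_2$ and applying Fubini with the substitution $(t_1,u)$, the weight integrates to $\int_u^1 \frac{\diff t_2}{t_2-t_1}\lesssim \log$-type factors. This has to be compared with $(u-t_1)^{-2}$, which is not immediate. A cleaner route is to bound $(\int_{T_1}^u g)^2\le (u-T_1)^2\cdot\frac{(\int_{T_1}^u g)^2}{(u-T_1)^2}$ and use $(u-T_1)^2/(T_2-T_1)\le (T_2-T_1)$. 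The term is then dominated by $\int_0^1\int_0^{u}\frac{(\int_{t_1}^u g)^2}{(u-t_1)^2}\diff t_1\,\diff u=\normwww{g}^2$, up to the bounded factor $\int \diff t_2\le 1$.

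Combining the three bounds gives the claim. The main obstacle is this last Fubini step, which controls the perturbation by $\Phi_0^2$ in the $\normwww{\cdot}$ geometry; I would attempt it exactly as described.
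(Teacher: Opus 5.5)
Your proposal is correct and follows the paper's route. The paper proves this lemma ``similarly to Lemma~\ref{lem:VarDiff}'': it bounds $-\log x+x-1$ quadratically on a compact range, splits $v_0-v$ into the $R_4$ and $R_5$ terms, and applies the upper bounds of Lemma~\ref{lem:Rs}. Lemma~\ref{lem:Rs} already asserts $R_5\simeq\normwww{\sigma^2-\underline{\sigma}^2}^2$, so your integration-by-parts check (which mirrors the paper's treatment of $R_3$) is redundant but valid, with one harmless slip: in fact $\Phi_0^2(u,T_2)=\Phi_0^2(T_1,T_2)\exp(-2\int_{T_1}^u b_{10})$, which only flips the sign of the remainder term you bound in absolute value anyway.
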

\begin{proof}
    The proof is similar to the one of Lemma~\ref{lem:VarDiff}.
\end{proof}

\begin{lemma}\label{lem:Rs}
It holds that
\begin{align*}
    R_1 &= \expv\left[\frac{1}{T_2 - T_1}(\Phi(T_1, T_2) - \underline{\Phi}(T_1, T_2))^2\right] &&\simeq \normw{b_{1}-\underline{b}_1}^2,\\
    R_2 &= \expv\left[\frac{1}{T_2 - T_1}\left(\int_{T_1}^{T_2} \{\Phi(u, T_2) - \underline{\Phi}(u,T_2)\}b_{2}(u)\,\diff u\right)^2\right] &&\lesssim \normw{b_{1}-\underline{b}_1}^2,\\
    R_3 &= \expv\left[\frac{1}{T_2 - T_1}\left(\int_{T_1}^{T_2} \Phi(u, T_2) \{b_{2}(u)- \underline{b}_{2}(u)\}\,\diff u\right)^2\right] &&\simeq \normww{b_{2}-\underline{b}_2}^2,\\
    R_4 &= \expv \left[\frac{1}{(T_2-T_1)^2}\left(\int_{T_1}^{T_2} \{\Phi^2(u,T_2)-\underline{\Phi}^2(u,T_2)\}\sigma^2(u)\, \diff u\right)^2\right] &&\lesssim \normw{b_{1}-\underline{b}_1}^2, \\
    R_5 &= \expv \left[\frac{1}{(T_2-T_1)^2}\left(\int_{T_1}^{T_2} \Phi^2(u,T_2)\{\sigma^2(u)-\underline{\sigma}^2(u)\}\, \diff u\right)^2\right] &&\simeq \normwww{\sigma^2 - \underline{\sigma}^2}^2.
\end{align*}
\end{lemma}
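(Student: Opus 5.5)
The plan is to write every $R_k$ as $2\int_0^1\int_0^{t_2}(\cdots)\,\diff t_1\,\diff t_2$, since $(T_1,T_2)$ is the ordered pair of two independent uniforms on $[0,1]$ and has density $2$ on $\{t_1<t_2\}$. The bounds then come from three ingredients. First, a mean-value linearisation of the exponential in $\Phi$. Second, a Cauchy--Schwarz/Fubini argument for the mixed terms $R_2,R_4$. Third, a multiplier lemma for $R_3,R_5$, which I expect to be the main obstacle. Throughout, Assumption~\ref{ass:beta} with $\beta>1$ gives uniform bounds $\norm{b_1}_\infty,\norm{b_2}_\infty,\norm{\sigma}_\infty\le C$ over the parameter set. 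Consequently $\Phi(s,t)\in[e^{-C},e^{C}]$ for all $s,t\in[0,1]$, and $\sigma^2\le C$.

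\emph{Step 1 ($R_1$).} By the mean value theorem,
\[
\Phi(t_1,t_2)-\underline\Phi(t_1,t_2)=e^{\xi}\int_{t_1}^{t_2}(b_1-\underline b_1)(u)\,\diff u
\]
with $\xi$ between the two exponents, so $e^{\xi}\in[e^{-C},e^{C}]$. Squaring, dividing by $t_2-t_1$ and integrating gives $R_1\simeq\normw{b_1-\underline b_1}^2$ directly from the definition of $\normw{\cdot}$ in Table~\ref{tab:Norms}.

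\emph{Step 2 (upper bounds for $R_2$ and $R_4$).} Set $\Delta=b_1-\underline b_1$ and $q(u,t_2)=(t_2-u)^{-1}\bigl(\int_u^{t_2}\Delta\bigr)^2$, so that $2\int\!\!\int q=\normw{\Delta}^2$. The same linearisation gives $|\Phi(u,t_2)-\underline\Phi(u,t_2)|\lesssim|\int_u^{t_2}\Delta|$, and likewise for $\Phi^2-\underline\Phi^2$. Hence the squared difference is bounded by a multiple of $(t_2-u)\,q(u,t_2)$. Applying Cauchy--Schwarz and using $|b_2|,\sigma^2\le C$,
\[
\frac{1}{(t_2-t_1)^{2}}\Bigl(\int_{t_1}^{t_2}|\Phi-\underline\Phi|\,\diff u\Bigr)^2\lesssim\frac{1}{t_2-t_1}\int_{t_1}^{t_2}(t_2-u)\,q(u,t_2)\,\diff u\le\int_{t_1}^{t_2}q(u,t_2)\,\diff u .
\]
Integrating over $t_1\in[0,u]$ by Fubini produces a factor $u\le1$, so $R_4\lesssim\normw{\Delta}^2$. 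For $R_2$ the weight is only $(t_2-t_1)^{-1}$, which is even easier. I use the factor $t_2-u$ deliberately here: the cruder bound $\int D^2\,\diff u/(t_2-t_1)$ would leave a logarithmic singularity after the $t_1$-integration.

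\emph{Step 3 ($R_3$ and $R_5$: multiplier lemma).} Write $\Phi(u,t_2)=\Phi(0,t_2)\,w(u)$ with $w=\Phi(0,\cdot)^{-1}$. Then $w$ is positive, bounded away from $0$ and $\infty$, and $|w'|=|b_1 w|\le C'$. Thus $R_3\simeq\normww{w\,(b_2-\underline b_2)}^2$, and similarly $R_5\simeq\normwww{w^2(\sigma^2-\underline\sigma^2)}^2$. It therefore suffices to show that multiplication by a $C^1$ function $w$ with $\norm{w}_\infty+\norm{w'}_\infty\le C$ is bounded in both $\normww{\cdot}$ and $\normwww{\cdot}$. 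Applying this to $w$ and to $1/w$ then yields the two-sided equivalence.

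For the multiplier bound, fix $t_1$ and let $G(u)=\int_{t_1}^u g$. Integration by parts gives
\[
\int_{t_1}^{t_2}wg=w(t_2)G(t_2)-\int_{t_1}^{t_2}w'G .
\]
The first term is controlled by the norm of $g$ itself. For the second term, Cauchy--Schwarz gives $(\int_{t_1}^{t_2}|G|)^2/(t_2-t_1)^{k}\le(t_2-t_1)^{1-k}\int_{t_1}^{t_2}G(u)^2\,\diff u$ for $k\in\{1,2\}$. We bound $G(u)^2=(u-t_1)^{k}\cdot\bigl[(u-t_1)^{-k}G(u)^2\bigr]$ and use $(u-t_1)^k\le(t_2-t_1)^{k}$. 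Integrating out $t_2\in[u,1]$ leaves a factor at most $1$. What remains is $\lesssim\int\!\!\int(u-t_1)^{-k}\bigl(\int_{t_1}^ug\bigr)^2$, which is the squared norm of $g$ (for $k=1$, $\normww{\cdot}$; for $k=2$, $\normwww{\cdot}$). This Fubini bookkeeping with the singular weights is where care is needed, and I would check it first. Combining the three steps yields all five relations in Lemma~\ref{lem:Rs}.
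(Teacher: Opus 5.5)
Your proof is correct. For $R_1$, $R_2$ and $R_4$ you follow the paper's route: the mean value theorem for the exponential, then Cauchy--Schwarz and Fubini. You actually write out the $(t_2-t_1)^{-2}$-weighted case $R_4$, which the paper only asserts is ``similar'' to $R_2$. Two small remarks there. First, $\int\!\!\int q=\normw{\Delta}^2$ holds without the factor $2$; the $2$ belongs to $\expv$. Second, the ``cruder'' ordering would also have worked: the $\log\frac{t_2}{t_2-u}$ produced by the $t_1$-integration multiplies $(t_2-u)\,q(u,t_2)$, and $x\log(t_2/x)$ is bounded.

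The genuine difference lies in $R_3$ and $R_5$. The paper keeps the $t_2$-dependent factor. It writes $b_2-\underline b_2=\Phi(\cdot,T_2)^{-1}\cdot\Phi(\cdot,T_2)(b_2-\underline b_2)$ and integrates by parts. It then uses $\Phi(u,T_2)^{-1}\Phi(v,T_2)=\Phi(v,u)$, so the remainder is again an $R_3$-type integrand with $u$ in the role of $T_2$. This self-bounding step gives $\normww{b_2-\underline b_2}^2\lesssim R_3$ directly. The reverse inequality and all of $R_5$ are left to ``similar arguments''.

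Your factorisation $\Phi(u,t_2)=\Phi(0,t_2)\,w(u)$ strips the $t_2$-dependence off the multiplier. A single multiplier lemma for functions with bounded value and derivative then suffices. Applied to $w$ and $1/w$ (respectively $w^2$ and $w^{-2}$), it gives both directions for both weights $k=1,2$ at once. Your key bookkeeping step, $(u-t_1)^k\le(t_2-t_1)^k$ followed by integrating out $t_2\in[u,1]$, is valid. It is precisely what handles the stronger weight in $R_5$.

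You pay with one extra auxiliary lemma. In return you get a complete, symmetric argument, where the paper's is one-sided and partly sketched.
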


\begin{proof}
    Concerning $R_1$, an application of the mean value theorem yields a constant $C_1 > 0$ with
\begin{align*}
    \expv\left[\frac{1}{T_2 - T_1}\left(\Phi(T_1, T_2) - \underline{\Phi}(T_1, T_2)\right)^2\right] &\geq C_1\, \expv\left[\frac{1}{T_2 - T_1}\left(\int_{T_1}^{T_2} b_{1}(u)-\underline{b}_1(u)\, \diff u\right)^2\right] \\
    &= 2\, C_1 \, \normw{b_{1}-\underline{b}_1}^2.
\end{align*}
Analogously, it follows that $R_1 \lesssim \normw{b_{1}-\underline{b}_1}^2$ and thus $R_1 \simeq \normw{b_{1}-\underline{b}_1}^2$.

Concerning $R_2$, applying the Cauchy-Schwarz inequality, the boundedness of $b_2$, the mean value theorem, and swapping the order of integration yields
\begin{align*}
   R_2 & \leq C_2 \,\expv\left[\frac{T_2 - T_1}{T_2 - T_1}\int_{T_1}^{T_2} \{\Phi(u, T_2) - \underline{\Phi}(u,T_2)\}^2\,\diff u\right]\\
    &\leq C_3 \,\expv\left[\int_{T_1}^{T_2} \left(\int_u^{T_2} b_{1}(v) - \underline{b}_1(v) \,\diff v\right)^2\,\diff u\right] \\
    &= 2 \,C_3 \,\int_0^1 \int_0^{t_2} \int_{t_1}^{t_2} \left(\int_u^{t_2} b_{1}(v)-\underline{b}_1(v)\, \diff v\right)^2 \, \diff u \,\diff t_1 \, \diff t_2 \\
    &= 2 \,C_3 \,\int_0^1 \int_0^{t_2} \left(\int_0^u  \diff t_1\right) \left(\int_u^{t_2} b_{1}(v)-\underline{b}_1(v)\, \diff v\right)^2  \,\diff u \, \diff t_2 \\
    &\leq 2 \,C_3 \,\int_0^1 \int_0^{t_2}  \frac{u}{t_2-u} \left(\int_u^{t_2} b_{1}(v)-\underline{b}_1(v)\, \diff v\right)^2  \,\diff u \, \diff t_2 \\
    &\lesssim 2 C_3 \,\normw{b_{1}-\underline{b}_1}^2.
\end{align*}

Concerning $R_3$, we first use integration by parts, 
\begin{align*}
&\int_{T_1}^{T_2} \Phi(u, T_2)^{-1}\Phi(u, T_2) \{b_{2}(u)- \underline{b}_{2}(u)\}\,\diff u  = \int_{T_1}^{T_2} \Phi(u,T_2) \{b_{2}(u) - \underline{b}_2(u)\}\, \diff u \\
&\qquad \qquad - \int_{T_1}^{T_2} b_{1}(u) \Phi(u,T_2)^{-1} \int_{T_1}^u \Phi(v,T_2) \{b_{2}(v) - \underline{b}_2(v)\} \, \diff v \, \diff u.
\end{align*}
Squaring both sides and using Cauchy-Schwarz as well as $\Phi(v, T_2)=\Phi(v,u)\Phi(u, T_2)$,
\begin{equation}\label{eq:IBP}
\begin{aligned}
   &\left(\int_{T_1}^{T_2} b_{2}(u)- \underline{b}_{2}(u)\,\diff u \right)^2 \leq C_4 \left(\int_{T_1}^{T_2} \Phi(u,T_2) \{b_{2}(u) - \underline{b}_2(u)\}\, \diff u\right)^2 \\
   &\qquad \qquad + C_4 (T_2 - T_1) \int_{T_1}^{T_2}\left(\int_{T_1}^u \Phi(v,u) \{b_{2}(v) - \underline{b}_2(v)\} \, \diff v  \right)^2\, \diff u.
   \end{aligned}
\end{equation}
Then,
\begin{align*}
    &\expv\left[\int_{T_1}^{T_2}\left(\int_{T_1}^u \Phi(v,u) \{b_{2}(v) - \underline{b}_2(v)\} \, \diff v  \right)^2\, \diff u\right] \\
    &\qquad \qquad  = 2 \int_0^1 \int_{t_1}^1 \int_{t_1}^{t_2} \left(\int_{t_1}^u \Phi(v,u)\{b_{2}(v) - \underline{b}_2(v) \}\, \diff v \right)^2 \diff u \, \diff {t_2} \, \diff {t_1} \\
    &\qquad \qquad \leq 2 \int_0^1 \int_{t_1}^1 \left(\int_{t_1}^u \Phi(v,u)\{b_{2}(v) - \underline{b}_2(v)\} \, \diff v \right)^2 \diff u\, \diff {t_1}\\
    &\qquad \qquad \leq 2 \int_0^1 \int_{t_1}^1 \frac{1}{t_2-t_1} \left(\int_{t_1}^{t_2} \Phi(v,t_2)\{b_{2}(v) - \underline{b}_2(v)\} \, \diff v \right)^2 \diff t_2\, \diff {t_1}\\
    &\qquad \qquad = C_5 \,\expv\left[\frac{1}{T_2 - T_1}\left(\int_{T_1}^{T_2} \Phi(u, T_2) \{b_{2}(u)- \underline{b}_{2}(u)\}\,\diff u\right)^2\right].
\end{align*}
Combining this bound with the expectation of \eqref{eq:IBP} guarantees that 
\begin{equation*}
    R_3 \geq C_6 \,\expv\left[\frac{1}{T_2 - T_1}\left(\int_{T_1}^{T_2} b_{2}(u)- \underline{b}_{2}(u)\,\diff u \right)^2\right] \geq C_6 \,\normww{b_{2}-\underline{b}_2}^2,
\end{equation*}
which reads as $R_3 \gtrsim \normww{b_{2}-\underline{b}_2}^2$. Using similar arguments, one can show that $R_3 \lesssim \normww{b_{2}-\underline{b}_2}^2$.

Concerning $R_4$ and $R_5$, the proofs are similar to the ones of $R_2$ and $R_3$.
\end{proof}

\begin{lemma}\label{lem:FG}
Let $\ell_\theta(X_1, X_2, T_1, T_2) = \log f_\theta(X_2\vert X_1, T_1, T_2) \ind\{X_1 \leq \delta_0\}$ for the generic parameter $\theta = (b_1, b_2, \sigma)$. 
Then, there exists a function $F$ such that
\begin{align*}
    \abs{\ell_\theta(X_1, X_2, T_1, T_2)-\ell_{\underline{\theta}}(X_1, X_2, T_1, T_2)} \lesssim F(X_1, X_2, T_1, T_2)\, G(\theta, \underline{\theta};T_1, T_2),&& \forall \,\theta, \underline{\theta},
\end{align*}
where $G$ is given by
\begin{align*}
    & G(\theta, \underline{\theta};T_1, T_2) \\
    &\qquad = \frac{\abs{\Phi(T_1, T_2) - \underline{\Phi}(T_1, T_2)}}{\sqrt{T_2 - T_1}} + \frac{\abs{m(0,T_1, T_2) - \underline{m}(0, T_1, T_2)}}{\sqrt{T_2 - T_1}} + \frac{\abs{v(T_1, T_2) - \underline{v}(T_1, T_2)}}{T_2 - T_1}.
\end{align*}
Furthermore, for any $C_1>0$, there exists some $\kappa > 0$ sufficiently small and $C_2 > 0$ sufficiently large, such that $F$ satisfies
\begin{equation*}
    \expv\left[\exp\{\kappa C_1F(X_1, X_2, T_1, T_2)\}\mid T_1, T_2\right] < C_2,
\end{equation*}
almost surely.
\end{lemma}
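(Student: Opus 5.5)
We write out the Gaussian transition log-density explicitly. With $\Delta = T_2 - T_1$ and $Z = X_2 - m(X_1,T_1,T_2)$, we have
\[
\ell_\theta = -\tfrac12\log(2\pi v) - \tfrac{(X_2 - \Phi X_1 - m(0,T_1,T_2))^2}{2v},
\]
multiplied by $\ind\{X_1\le\delta_0\}$, where $\Phi=\Phi(T_1,T_2)$ and $v=v(T_1,T_2)$. The indicator is common to both parameters and bounded by one, so it can be dropped. The difference $\ell_\theta-\ell_{\underline\theta}$ then splits into two pieces: the log-variance difference and the quadratic-form difference.

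\textbf{Log-variance piece.} All parameters lie in $\Theta_N$, which is constrained by Assumption~\ref{ass:beta}, so $\norm{b_1}_\infty,\norm{b_2}_\infty,\norm{\sigma}_\infty,\norm{\sigma^{-1}}_\infty$ are uniformly bounded. Hence $\Phi$ is bounded above and below, and $C^{-1}\Delta\le v\le C\Delta$. The mean value theorem then gives
\[
\abs{\log v-\log\underline v}\lesssim \abs{v-\underline v}/\Delta .
\]
This is the third summand of $G$, with multiplier $F\ge 1$.

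\textbf{Quadratic-form piece.} Write the residuals as $R=X_2-\Phi X_1-m(0)$ and $\underline R$. We have
\[
\frac{R^2}{v}-\frac{\underline R^2}{\underline v}
= \frac{(R-\underline R)(R+\underline R)}{v} + \underline R^2\,\frac{\underline v - v}{v\underline v}.
\]
Here $R-\underline R = -(\Phi-\underline\Phi)X_1-(m(0)-\underline m(0))$ and $v\gtrsim\Delta$. Write $R+\underline R = 2\underline R + (R-\underline R)$ and normalise by $\sqrt{\Delta}$. This bounds the first term by
\[
\Bigl(\tfrac{\abs{\Phi-\underline\Phi}}{\sqrt\Delta}(1+\abs{X_1})+\tfrac{\abs{m(0)-\underline m(0)}}{\sqrt\Delta}\Bigr)\Bigl(\tfrac{\abs{\underline R}}{\sqrt\Delta}+\tfrac{\abs{R-\underline R}}{\sqrt\Delta}\Bigr).
\]
The factor $\abs{R-\underline R}/\sqrt\Delta$ is itself bounded by $(1+\abs{X_1})$ times a quantity uniformly bounded over $\Theta_N$. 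This uses $\abs{\Phi-\underline\Phi}\lesssim\Delta$ and $\abs{m(0)-\underline m(0)}\lesssim\Delta$, both of which follow from bounded coefficients. The second term is at most $(\underline R^2/\Delta)\cdot\abs{v-\underline v}/\Delta$.

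\textbf{Removing the dependence on $\underline\theta$.} The envelope $\underline R/\sqrt\Delta$ still depends on $\underline\theta$, and I expect making $F$ parameter-free to be the main obstacle. I would write $X_2 = m_0(X_1) + \sqrt{v_0}\,\xi$ under the true model, where $\xi\sim N(0,1)$ is independent of $(X_1,T_1,T_2)$ by Proposition~\ref{prop:CondMeanVar}. Then, uniformly in $\underline\theta$,
\[
\abs{\underline R}/\sqrt\Delta \le \abs{\xi}\sqrt{v_0/\Delta} + \abs{m_0(X_1)-\underline m(X_1)}/\sqrt\Delta \lesssim \abs\xi + 1+\abs{X_1}.
\]
Collecting everything, I would set
\[
F = C\,(1+\abs{X_1}+\abs{\xi})^2 ,
\]
which depends only on the data because $\xi=(X_2-m_0(X_1,T_1,T_2))/\sqrt{v_0(T_1,T_2)}$. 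This yields the asserted inequality with $G$ as stated.

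\textbf{Exponential moment bound.} Conditionally on $(T_1,T_2)$, $X_1\sim N(0,\var X(T_1))$, with variance bounded uniformly by Assumption~\ref{ass:X} and bounded coefficients. Also $\xi$ is standard normal and independent of $X_1$. Then $(1+\abs{X_1}+\abs\xi)^2\le 3(1+X_1^2+\xi^2)$, and a Gaussian square $Y^2$ with bounded variance $s^2$ satisfies $\expv e^{tY^2}<\infty$ whenever $t<1/(2s^2)$. So for any fixed $C_1$, choosing $\kappa$ small enough that $3\kappa C_1 C$ is below this threshold gives
\[
\expv[\exp(\kappa C_1F)\mid T_1,T_2]\le e^{3\kappa C_1C}\,\expv e^{3\kappa C_1CX_1^2}\,\expv e^{3\kappa C_1C\xi^2}<C_2
\]
almost surely. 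The only care needed is that every constant above is uniform over $\Theta_N$ and over $T_1<T_2$. This holds because each bound relied solely on sup-norm bounds for the coefficients and on $v\simeq\Delta$.
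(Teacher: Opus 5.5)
Your proposal is correct and follows essentially the same route as the paper. Both arguments split off the log-variance term, expand the difference of the quadratic forms into pieces bounded by $G$ times normalized residuals, and then dominate those residuals by the true-model standardized variables (your $X_1$ and $\xi$, the paper's $Z_1$ and $Z_2$), whose conditional independence gives the exponential-moment bound. You also spell out two steps that the paper only asserts: passing from parameter-dependent to parameter-free envelopes, and making the $(m-\underline m)^2$ term linear in $G$ via $\abs{R-\underline R}/\sqrt{\Delta}\lesssim 1+\abs{X_1}$.

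One small slip: conditionally on $T_1$, $X_1$ has mean $\int_0^{T_1}\Phi_0(u,T_1)b_{20}(u)\,\diff u$, which need not vanish. Since this mean is uniformly bounded, the bound $\expv[e^{tX_1^2}\mid T_1]<C$ for small $t$ still holds.
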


\begin{proof}
Consider the increments
\begin{align*}
    Y_1 = \frac{X_1-m(0,0,T_1)}{\sqrt{\Phi^2(0,T_1)v_0 + v(0, T_1)}}, && Y_2 = \frac{X_2 - m(X_1, T_1, T_2)}{\sqrt{v(T_1, T_2)}}, 
\end{align*}
as well as
\begin{align*}
    Z_1 = \frac{X_1-m_0(0,0,T_1)}{\sqrt{\Phi_0^2(0,T_1)v_0 + v_0(0, T_1)}}, && Z_2 = \frac{X_2 - m_0(X_1, T_1, T_2)}{\sqrt{v_0(T_1, T_2)}}.
\end{align*}
Note that conditionally on $(T_1, T_2)$, the variables $Z_1$ and $Z_2$ are independent standard normal under the true probability measure. We observe that the log density satisfies
\begin{align*}
    \abs*{\ell_{\theta}-\ell_{\underline{\theta}}} &\leq \frac{1}{2}\abs{\log \frac{\underline{v}}{v}} + \frac{1}{2}\abs*{\frac{(X_2 - m)^2}{v} - \frac{(X_2 - \underline{m})^2}{\underline{v}}} \\
    & \lesssim \frac{\abs{v-\underline{v}}}{v} + \abs*{(X_2 - m)^2 \frac{\underline{v}-v}{v\underline{v}} + \frac{(X_2-m)^2 - (X_2-\underline{m})^2}{\underline{v}}}\\
    &\lesssim  \frac{\abs{v-\underline{v}}}{v} + \frac{(X_2 - m)^2}{v} \,\frac{\abs{v - \underline{v}}}{\underline{v}} + \frac{\abs{(m-\underline{m})(2(X_2 - m) + m - \underline{m})}}{\underline{v}}\\
    &\lesssim  \frac{\abs{v-\underline{v}}}{v} + \frac{(X_2 - m)^2}{v} \,\frac{\abs{v - \underline{v}}}{\underline{v}} + \frac{\abs{X_2-m}}{\sqrt{\underline{v}}}\frac{\abs{m-\underline{m}}}{\sqrt{\underline{v}}} + \frac{(m-\underline{m})^2}{\underline{v}}\\
    &\lesssim  (1+Y_2^2)\frac{\abs{v-\underline{v}}}{T_2 - T_1} + \abs{Y_2}\frac{\abs{m-\underline{m}}}{\sqrt{T_2-T_1}} + \frac{(m-\underline{m})^2}{T_2-T_1},
\end{align*}
uniformly in the variables $(X_1, X_2, T_1,T_2)$. Furthermore, it can be easily seen that
\begin{equation*}
    \frac{\abs{m - \underline{m}}}{\sqrt{T_2-T_1}} \lesssim (1 + \abs{Y_1}) \frac{\abs{\Phi(T_1, T_2) - \underline{\Phi}(T_1, T_2)}}{\sqrt{T_2 - T_1}} +\frac{\abs{m(0, T_1, T_2) - \underline{m}(0, T_1, T_2)}}{\sqrt{T_2 - T_1}}.
\end{equation*}
Combining the above, it can be shown that
\begin{equation*}
    \abs*{\ell_{\theta}-\ell_{\underline{\theta}}} \lesssim  \left(1 + Y_1^2 + Y_2^2\right) G(\theta, \underline{\theta};T_1, T_2) \lesssim \left(1 + Z_1^2 + Z_2^2\right) G(\theta, \underline{\theta};T_1, T_2).
\end{equation*}
Consider $F(X_1, X_2, T_1, T_2) = 1 + Z_1^2 + Z_2^2$ and let $C_1 > 0$ be arbitrary. Because the conditional distributions of $Z_1^2$ and $Z_2^2$ have sub-exponential tails, there exists some $\kappa > 0$ small enough and a constant $C_2 > 0$ which is sufficiently large, such that
\begin{align*}
  &\expv\left[ \exp\left\lbrace\kappa C_1 F(X_1, X_2, T_1, T_2)\right\rbrace \mid T_1, T_2\right] \\
  &\qquad = \exp\left\lbrace \kappa C_1 \right\rbrace\expv\left[ \exp\left\lbrace\kappa C_1 Z_1^2\right\rbrace\expv\left[\exp\left\lbrace\kappa C_1Z_2^2\right\rbrace\mid T_1, T_2, Z_1 \right]  \mid T_1,T_2\right] \\
  &\qquad = \exp\left\lbrace \kappa C_1 \right\rbrace \expv\left[\exp\left\lbrace\kappa C_1 Z_1^2\right\rbrace \mid T_1\right]\expv\left[\exp\left\lbrace\kappa C_1Z_2^2\right\rbrace\mid T_1, T_2\right]  < C_2,
\end{align*}
almost surely. We used the fact that $Z_2$ is conditionally independent of $Z_1$ given $T_1, T_2$.
\end{proof}

\begin{lemma}\label{lem:G}
Consider $G$ from Lemma~\ref{lem:FG}. Then,
\begin{enumerate}[label=(\roman*)]
 \item $G(\theta, \underline{\theta};T_1, T_2) \lesssim \norm{b_1 - \underline{b}_1}_\infty + \norm{b_2 - \underline{b}_2}_\infty + \norm{\sigma^2 - \underline{\sigma}^2}_\infty$,
 \item $\expv\left[G(\theta, \underline{\theta};T_1, T_2)^2\right] \lesssim \normw{b_1 - \underline{b}_1}^2 + \normww{b_2 - \underline{b}_2}^2 + \normwww{\sigma^2 - \underline{\sigma}^2}^2$.
\end{enumerate}
\end{lemma}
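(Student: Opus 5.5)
We bound each of the three summands of $G$ separately, using only the explicit formulas $\Phi(s,t)=\exp(\int_s^t b_1)$, $m(0,s,t)=\int_s^t\Phi(u,t)b_2(u)\,\diff u$ and $v(s,t)=\int_s^t\Phi^2(u,t)\sigma^2(u)\,\diff u$. We also use that all parameters in $\Theta_N$ are uniformly bounded in sup-norm by Assumption~\ref{ass:beta}, since $\sum_k k^{-\beta}<\infty$ for $\beta>1$. Consequently $\Phi$, $\underline\Phi$ and their reciprocals are bounded above and below by constants on $[0,1]^2$.

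\textbf{Part (i).} The mean value theorem for $\exp$ gives
\[
\abs{\Phi(T_1,T_2)-\underline\Phi(T_1,T_2)}\lesssim \abs*{\int_{T_1}^{T_2}(b_1-\underline b_1)}\le (T_2-T_1)\norm{b_1-\underline b_1}_\infty .
\]
Dividing by $\sqrt{T_2-T_1}\le 1$ yields a bound by $\sqrt{T_2-T_1}\,\norm{b_1-\underline b_1}_\infty$. For the mean term we split
\[
m-\underline m=\int_{T_1}^{T_2}(\Phi-\underline\Phi)(u,T_2)\,b_2(u)\,\diff u+\int_{T_1}^{T_2}\underline\Phi(u,T_2)(b_2-\underline b_2)(u)\,\diff u .
\]
Both pieces are $O\bigl((T_2-T_1)(\norm{b_1-\underline b_1}_\infty+\norm{b_2-\underline b_2}_\infty)\bigr)$. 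For the variance term we split analogously into a $\Phi^2-\underline\Phi^2$ part and a $\sigma^2-\underline\sigma^2$ part. This gives $\abs{v-\underline v}\lesssim (T_2-T_1)(\norm{b_1-\underline b_1}_\infty+\norm{\sigma^2-\underline\sigma^2}_\infty)$, and division by $T_2-T_1$ leaves a bounded constant.

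\textbf{Part (ii).} We square $G$, use $(a+b+c)^2\le 3(a^2+b^2+c^2)$, and take expectations over $(T_1,T_2)$. These are the order statistics of two uniforms, with density $2$ on $\{0\le t_1<t_2\le 1\}$. The three resulting terms are exactly quantities already controlled in Lemma~\ref{lem:Rs}.
\begin{itemize}
\item The first term equals $R_1$, which is $\simeq\normw{b_1-\underline b_1}^2$.
\item The second term is, after the splitting above and $(x+y)^2\le 2x^2+2y^2$, at most $2R_2+2R_3'$. Here $R_3'$ is $R_3$ with $\Phi$ replaced by $\underline\Phi$, and Lemma~\ref{lem:Rs} applies symmetrically in $\theta,\underline\theta$. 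This gives the bound $\normw{b_1-\underline b_1}^2+\normww{b_2-\underline b_2}^2$.
\item The third term is at most $2R_4+2R_5'$, bounded by $\normw{b_1-\underline b_1}^2+\normwww{\sigma^2-\underline\sigma^2}^2$.
\end{itemize}
Summing gives (ii).

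\textbf{Main obstacle.} The main obstacle is the upper bounds $R_3\lesssim\normww{\cdot}^2$ and $R_5\lesssim\normwww{\cdot}^2$, whose proofs are only sketched in Lemma~\ref{lem:Rs} ("similar arguments"). Their difficulty is that the weight $\underline\Phi(u,T_2)$ cannot simply be pulled out of the integral without losing the cancellation that makes the weighted norms weaker than $L^2$.

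To handle this I would integrate by parts in the reverse direction of \eqref{eq:IBP}. Writing $D(u)=\int_{T_1}^u(b_2-\underline b_2)$, we have
\[
\int_{T_1}^{T_2}\underline\Phi(u,T_2)\,\diff D(u)=D(T_2)+\int_{T_1}^{T_2}\underline b_1(u)\,\underline\Phi(u,T_2)\,D(u)\,\diff u ,
\]
using $\partial_u\underline\Phi(u,T_2)=-\underline b_1(u)\underline\Phi(u,T_2)$. The first term contributes exactly $\normww{\cdot}^2$. For the second, Cauchy--Schwarz and a Fubini swap bound it by
\[
\int\!\!\int\!\!\int_{t_1}^{t_2}\Bigl(\int_{t_1}^u(b_2-\underline b_2)\Bigr)^2\diff u\,\diff t_1\,\diff t_2\lesssim\normww{b_2-\underline b_2}^2 ,
\]
after enlarging the weight $1$ to $(u-t_1)^{-1}$. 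The same argument with $\underline\Phi^2$ and the weight $(T_2-T_1)^{-2}$ handles $R_5$; there the Fubini step uses $(t_2-t_1)^{-1}\le (u-t_1)^{-1}$.
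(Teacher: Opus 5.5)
Your proof is correct and follows the paper's route. Part (i) is the direct sup-norm bound that the paper calls straightforward, and part (ii) reduces $\expv[G^2]$ to $R_1,\dots,R_5$ and invokes the upper bounds of Lemma~\ref{lem:Rs}. Your reverse integration by parts, followed by Cauchy--Schwarz and Fubini, validly proves the upper bounds on $R_3$ and $R_5$, which the paper leaves to ``similar arguments''.
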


\begin{proof}
    The proof of (i) is straightforward. Concerning (ii), note that the terms in Lemma~\ref{lem:Rs} yield
    \begin{equation*}
        \expv\left[G(\theta, \underline{\theta};T_1, T_2)^2\right] \lesssim R_1 + R_2 + R_3 + R_4 + R_5,
    \end{equation*}
    and the assertion then follows from the corresponding upper bounds.
\end{proof}

\section{Comparison of norms}\label{sec:Norms}

\begin{lemma}\label{lem:UpperBoundNorm}
    For $f \in L^2[0,1]$, it holds that
    \begin{equation*}
        \normww{f} \leq \normwww{f} \leq \sqrt{\log(2)} \, \norm{f}_{L^2}.
    \end{equation*}
\end{lemma}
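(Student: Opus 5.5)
The first inequality is a pointwise comparison of the integrands. The second is a Cauchy--Schwarz bound followed by an explicit evaluation of the resulting kernel.

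\emph{First inequality.} For $0\le t_1<t_2\le 1$ we have $t_2-t_1\le 1$, hence $\frac{1}{t_2-t_1}\le \frac{1}{(t_2-t_1)^2}$. Multiplying by the nonnegative factor $\bigl(\int_{t_1}^{t_2} f\bigr)^2$ and integrating over the triangle $\{0\le t_1<t_2\le 1\}$ gives $\normww{f}^2\le\normwww{f}^2$.

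\emph{Second inequality.} By Cauchy--Schwarz,
\[
\Bigl(\int_{t_1}^{t_2} f(u)\,\diff u\Bigr)^2\le (t_2-t_1)\int_{t_1}^{t_2} f(u)^2\,\diff u .
\]
Therefore
\[
\normwww{f}^2\le \int_0^1\int_0^{t_2}\frac{1}{t_2-t_1}\int_{t_1}^{t_2} f(u)^2\,\diff u\,\diff t_1\,\diff t_2 .
\]
This bound alone is not usable, because the kernel $\frac{1}{t_2-t_1}$ is not integrable near the diagonal. So I would not apply Cauchy--Schwarz crudely but check the interchange carefully. Apply Fubini/Tonelli, which is valid since the integrand is nonnegative. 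For fixed $u$ the region is $t_1\le u\le t_2$, and the weight becomes
\[
w(u)=\int_u^1\int_0^u \frac{\diff t_1\,\diff t_2}{t_2-t_1}.
\]
The singularity appears only when $t_1=t_2=u$, a point singularity with a logarithmic kernel, so $w$ is finite. Compute the inner integral: $\int_0^u \frac{\diff t_1}{t_2-t_1}=\log t_2-\log(t_2-u)$. Then
\[
w(u)=\int_u^1 \bigl[\log t_2-\log(t_2-u)\bigr]\,\diff t_2 = -u\log u-(1-u)\log(1-u),
\]
which is the binary entropy in nats. Its maximum is at $u=1/2$, with value $\log 2$. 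Hence $\normwww{f}^2\le \int_0^1 w(u) f(u)^2\,\diff u\le \log(2)\norm{f}_{L^2}^2$, and taking square roots gives the claim.

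The main obstacle is the Tonelli interchange and the evaluation of $w(u)$. This has to be done with the correct limits, and I would double-check the antiderivative $\int \log(t_2-u)\,\diff t_2=(t_2-u)\log(t_2-u)-(t_2-u)$ at both endpoints. The result also gives the sharper weighted bound $\normwww{f}^2\le\int_0^1 w(u)f(u)^2\,\diff u$, which may be worth recording for later use.
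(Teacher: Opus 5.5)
Your proof is correct and follows the paper's argument: you compare weights for the first inequality, and for the second you use Cauchy--Schwarz, then Tonelli, then bound the resulting kernel by $\log(2)$. You go slightly further by computing that kernel exactly as the binary entropy $-u\log u-(1-u)\log(1-u)$, whereas the paper only asserts it is at most $\log(2)$; your aside that the Cauchy--Schwarz bound is ``not usable'' is unnecessary, since Tonelli applied to the nonnegative integrand handles it directly, exactly as you then do.
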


\begin{proof}
The inequality $ \normww{f} \leq \normwww{f}$ follows from a simple comparison of the weights. For the second inequality, the Cauchy-Schwarz inequality implies
    \begin{align*}
    \normwww{f}^2 &= \int_0^1 \int_0^{t_2} \frac{1}{(t_2 - t_1)^2} \left(\int_{t_1}^{t_2} f(u) \, \diff u\right)^2 \,\diff t_1\, \diff t_2 \\
    &\leq \int_0^1 \int_0^{t_2} \frac{1}{t_2 - t_1} \int_{t_1}^{t_2} f^2(u) \,\diff u \, \diff t_1 \, \diff t_2.
\end{align*}
Changing the order of integration and using the inequality $\int_u^{1} \int_0^u  \frac{1}{t_2-t_1}  \, \diff t_1 \, \diff t_2 \leq \log(2)$,
\begin{equation*}
    \normwww{f}^2 \leq \int_0^1 f^2(u) \int_u^{1} \int_0^u  \frac{1}{t_2-t_1}  \, \diff t_1 \, \diff t_2 \,\diff  u\leq \int_0^1 f^2(u) \log(2) \, \diff u \leq \log(2) \norm{f}_{L^2}^2,
\end{equation*}
which completes the proof.
\end{proof}

\begin{lemma}\label{lem:LowerBoundNorm}
    If $f\in L^2[0,1]$ satisfies the finite basis expansion $f(u) = \sum_{k=1}^M f_k \varphi_k(u)$, there exists some $c_6 > 0$, not depending on $M$, such that
\begin{align*}
    \normww{f}^2 \geq \frac{c_6}{M^{2}} \norm{f}_{L^2}^2, && 
    \normwww{f}^2 \geq \frac{c_6}{M^1} \norm{f}_{L^2}^2.
\end{align*}
\end{lemma}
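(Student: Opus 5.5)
The plan is to bound both weighted norms from below by restricting the outer integration to short lag intervals $t_2-t_1=\delta\le\delta_0$, with $\delta_0\asymp 1/M$. On such intervals the local average $\delta^{-1}\int_{t_1}^{t_1+\delta}f$ is close to $f(t_1)$ in $L^2$, because $f$ lies in the span of the first $M$ cosine functions and therefore cannot oscillate on scales finer than $1/M$.

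\emph{Step 1 (change of variables).} Substitute $t_2=t_1+\delta$ and write $A_\delta f(t)=\delta^{-1}\int_t^{t+\delta}f(u)\,\diff u$ for $t\in[0,1-\delta]$. Then
\begin{align*}
\normwww{f}^2=\int_0^1\int_0^{1-\delta}\bigl(A_\delta f(t)\bigr)^2\,\diff t\,\diff\delta,
\qquad
\normww{f}^2=\int_0^1\delta\int_0^{1-\delta}\bigl(A_\delta f(t)\bigr)^2\,\diff t\,\diff\delta .
\end{align*}

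\emph{Step 2 (Bernstein-type inequality).} Since $\varphi_{k+1}'=-\sqrt2\,k\pi\sin(k\pi\cdot)$ and the sines are orthonormal, Parseval gives $\norm{f'}_{L^2}\le\pi M\norm{f}_{L^2}$ for every such $f$.

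\emph{Step 3 (local averages approximate $f$).} Write $A_\delta f(t)-f(t)=\delta^{-1}\int_t^{t+\delta}\bigl(f(u)-f(t)\bigr)\,\diff u$. By Cauchy--Schwarz and Fubini, $\int_0^{1-\delta}(A_\delta f-f)^2\,\diff t\le\delta^2\norm{f'}_{L^2}^2$. The same bound holds for the backward version: reindexing by the right endpoint, the same quantity $\int_0^{1-\delta}(A_\delta f)^2\,\diff t$ equals $\int_\delta^1\bigl(\delta^{-1}\int_{s-\delta}^{s}f\bigr)^2\,\diff s$, and that backward average is within $\delta\norm{f'}_{L^2}$ of $f(s)$ on $[\delta,1]$. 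Combine $(a+b)^2\ge \tfrac12a^2-b^2$ with the fact that $[0,1-\delta]\cup[\delta,1]=[0,1]$ (this handles the endpoint effect). Adding the forward and backward representations then gives
\[
2\int_0^{1-\delta}(A_\delta f)^2\,\diff t\;\ge\;\tfrac12\norm{f}_{L^2}^2-2\pi^2\delta^2M^2\norm{f}_{L^2}^2\;\ge\;\tfrac14\norm{f}_{L^2}^2
\qquad\text{whenever }\delta\le\delta_0:=\min\{1,(4\pi M)^{-1}\}.
\]

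\emph{Step 4 (integrate over small lags).} Restrict the $\delta$-integrals in Step 1 to $(0,\delta_0]$. This yields $\normwww{f}^2\ge\delta_0\norm{f}_{L^2}^2/8$ and $\normww{f}^2\ge\delta_0^2\norm{f}_{L^2}^2/16$. With $\delta_0\asymp M^{-1}$ these give the two claimed inequalities, with a constant $c_6$ independent of $M$.

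The main obstacle is Step 3: keeping the approximation error uniform in $M$ while not losing mass near the boundary. The Bernstein inequality of Step 2 controls the first issue, and the forward/backward symmetrization controls the second. Everything else is bookkeeping of constants.
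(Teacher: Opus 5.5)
Your proof is correct and follows essentially the same route as the paper. You substitute the lag $t_2-t_1$, compare local averages with $f$ via the Bernstein bound $\norm{f'}_{L^2}\le\pi M\norm{f}_{L^2}$, and integrate only over lags of order $1/M$. The one difference is at the boundary: the paper restricts to $t_1\in[h,1-h]$ (which it needs anyway for its more general weight $t_1^\alpha$) and controls the discarded mass with $\norm{f}_\infty^2\le 2M\norm{f}_{L^2}^2$, whereas your forward/backward symmetrization avoids any sup-norm estimate; it is valid because $\delta_0<1/2$, so that $[0,1-\delta]\cup[\delta,1]=[0,1]$.
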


\begin{proof}
For $\alpha, \gamma \geq 0$ such that $3 + \alpha - \gamma > 0$, consider the norm
\begin{equation*}
    \norm{f}_{\alpha, \gamma} = \sqrt{\int_0^1 \int_0^{t_2} \frac{t_1^\alpha}{(t_2-t_1)^\gamma}\left(\int_{t_1}^{t_2} f(u) \, \diff u\right)^2 \,\diff t_1\, \diff t_2}.
\end{equation*}
The assertion of the lemma then follows from the more general result
\begin{equation*}
    \norm{f}_{\alpha, \gamma}^2 \geq \frac{c_6}{M^{3+\alpha-\gamma}} \norm{f}_{L^2}^2.
\end{equation*}
Toward this end, define $(\mathcal{A}_h f)(t) = \frac{1}{h} \int_0^{h} f(t + u) \, \diff u$, for $t \in [h,1-h]$, and observe
\begin{equation}\label{eq:NormAlpha}
    \norm{f}_{\alpha, \gamma}^2 \geq \int_0^1 h^{2+\alpha-\gamma} \norm{\mathcal{A}_h f}^2_{L^2(h,1-h)} \, \diff h.
\end{equation}
An argument based on the Minkowski inequality shows
\begin{equation*}
    \norm{\mathcal{A}_h f - f}_{L^2(h, 1-h)} \leq \frac{h}{2} \norm{f'}_{L^2}.
\end{equation*}
Since $\norm{f'}_{L^2} \leq \pi M \norm{f}_{L^2}$, it follows that
\begin{equation}\label{eq:Ah}
    \norm{\mathcal{A}_h f - f}_{L^2(h, 1-h)} \leq \frac{\pi M h}{2} \norm{f}_{L^2}.
\end{equation}
In addition, the Cauchy--Schwarz inequality yields $\norm{f}_\infty^2 \leq 2 M \norm{f}_{L^2}^2$ which implies
\begin{equation}\label{eq:fh}
    \norm{f}_{L^2(h, 1-h)}^2 \geq \norm{f}_{L^2}^2 - 2 h \norm{f}_\infty^2 \geq (1-4Mh) \norm{f}_{L^2}^2.
\end{equation}
Combining \eqref{eq:Ah} and \eqref{eq:fh}, one obtains
\begin{equation*}
    \norm{\mathcal{A}_h f}_{L^2(h,1-h)}  \geq \left(\sqrt{1-4Mh} - \frac{\pi Mh}{2}\right)\norm{f}_{L^2} \geq \frac{1}{2} \norm{f}_{L^2},
\end{equation*}
provided that $h \leq 1/(8M)$. Plugging the above into~\eqref{eq:NormAlpha},
\begin{equation*}
    \norm{f}_{\alpha,\gamma}^2 \geq \int_0^{1/(8M)} h^{2+\alpha-\gamma} \norm{\mathcal{A}_h f}^2_{L^2(h,1-h)} \, \diff h \geq \int_0^{1/(8M)} h^{2+\alpha-\gamma} \frac{1}{4} \norm{f}_{L^2}^2 \, \diff h \geq \frac{c_6}{M^{3+\alpha-\gamma}} \norm{f}_{L^2}^2, 
\end{equation*}
for some constant $c_6 > 0$ which does not depend on $M$.
\end{proof}

\begin{lemma}\label{lem:LowerBoundNormII}
Let $\sigma(u) = \exp(\sum_{k=1}^M \sigma_k \varphi_k(u))$ and $\underline{\sigma}(u) = \exp(\sum_{k=1}^M \underline{\sigma}_k \varphi_k(u))$. There exists some $c_7 > 0$, not depending on $M$, such that
\begin{equation*}
    \normwww{\sigma^2-\underline{\sigma}^2}^2 \geq \frac{c_7}{M} \norm{\sigma^2-\underline{\sigma}^2}_{L^2}^2.
\end{equation*}
\end{lemma}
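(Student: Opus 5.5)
Write $g=\log\sigma=\sum_{k\le M}\sigma_k\varphi_k$ and $\underline g=\log\underline\sigma$, which both lie in the span of the first $M$ cosine functions. The difficulty is that $\sigma^2-\underline\sigma^2$ is not itself in this span, so Lemma~\ref{lem:LowerBoundNorm} does not apply directly. The plan is to rerun the proof of Lemma~\ref{lem:LowerBoundNorm} (with $\alpha=0$, $\gamma=2$) for the function $f=\sigma^2-\underline\sigma^2$. In that argument only two spectral facts about $f$ were used:
\begin{itemize}
\item a Bernstein-type bound $\norm{f'}_{L^2}\lesssim M\norm{f}_{L^2}$;
\item a Nikolskii-type bound $\norm{f}_\infty^2\lesssim M\norm{f}_{L^2}^2$.
\end{itemize}
Once both are established for $f=\sigma^2-\underline\sigma^2$ with constants independent of $M$, the same chain goes through. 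We would use the bound $\normwww{f}^2\ge\int_0^1 \norm{\mathcal A_h f}^2_{L^2(h,1-h)}\,\diff h$, the inequality $\norm{\mathcal A_h f-f}_{L^2(h,1-h)}\le \tfrac h2\norm{f'}_{L^2}$, and the boundary-strip estimate $\norm{f}_{L^2(h,1-h)}^2\ge(1-CMh)\norm f_{L^2}^2$. Restricting to $h\le c/M$ then gives $\normwww{f}^2\ge c_7 M^{-1}\norm f_{L^2}^2$.

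\textbf{Representation of $f$.} We write
\[
f=e^{2g}-e^{2\underline g}=2(g-\underline g)\,w,\qquad w=\int_0^1 e^{2(\underline g+s(g-\underline g))}\,\diff s .
\]
By the constraint from Assumption~\ref{ass:beta} built into $\Theta_N$ (coefficients bounded by $Ck^{-\beta}$ with $\beta>1$), $g$ and $\underline g$ are uniformly bounded. Hence $w$ is bounded above and below by positive constants independent of $M$. This gives $\norm f_{L^2}\simeq\norm{g-\underline g}_{L^2}$ and $\norm f_\infty\simeq\norm{g-\underline g}_\infty$. The Nikolskii bound then follows from the one for the trigonometric polynomial $g-\underline g$, namely $\norm{g-\underline g}_\infty^2\le 2M\norm{g-\underline g}_{L^2}^2$.

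\textbf{Main obstacle: the derivative bound.} Differentiating gives
\[
f'=2(g-\underline g)'\,w+2(g-\underline g)\,w' .
\]
The first term is controlled by $M\norm{g-\underline g}_{L^2}\lesssim M\norm f_{L^2}$ via Bernstein's inequality. The second term needs $\norm{w'}_\infty\lesssim M$, and this is the step requiring care. We have $w'$ bounded by $\norm{g'}_\infty+\norm{\underline g'}_\infty$ times a constant, and $\norm{g'}_\infty\lesssim\sum_{k\le M}k\,\abs{\sigma_k}\lesssim\sum_{k\le M} k^{1-\beta}$. This is $O(M)$, indeed $o(M)$, since $\beta>1$. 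If this bound fails for general parameters in $\Theta_N$, the fallback is to cite the coefficient constraint explicitly.

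\textbf{Conclusion.} Combining the two terms gives $\norm{f'}_{L^2}\le C M\norm f_{L^2}$. Plugging this into the averaging argument with $h\le 1/(CM)$ yields $\norm{\mathcal A_h f}_{L^2(h,1-h)}\ge\tfrac12\norm f_{L^2}$. Integrating over $h\in(0,1/(CM))$ then gives $\normwww{f}^2\ge c_7M^{-1}\norm{f}_{L^2}^2$, which is the claim.
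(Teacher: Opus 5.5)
Your proposal is correct and takes the same route as the paper. The paper's proof likewise sets $f=\sigma^2-\underline{\sigma}^2$, asserts $\norm{f'}_{L^2}\lesssim M\norm{f}_{L^2}$ and $\norm{f}_\infty^2\lesssim M\norm{f}_{L^2}^2$, and reruns the averaging argument of Lemma~\ref{lem:LowerBoundNorm} with $\alpha=0$, $\gamma=2$. Your factorization $f=2(g-\underline g)w$, with $w$ bounded above and below and $\norm{w'}_\infty\lesssim M$, supplies the justification of these two bounds that the paper omits. It also correctly makes explicit that the coefficient constraint of Assumption~\ref{ass:beta} is genuinely needed, which the lemma's statement leaves implicit.
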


\begin{proof}
Set $f(u) = \sigma^2(u) - \underline{\sigma}^2(u)$ for $u \in [0,1]$. Then, $\norm{f'}_{L^2} \lesssim M \norm{f}_{L^2}$ and $\norm{f}_{\infty}^2 \lesssim M \norm{f}_{L^2}^2$. The assertion then follows from arguments similar to those used in the proof of Lemma~\ref{lem:LowerBoundNorm} for the case $\alpha = 0$ and $\gamma = 2$.
\end{proof}

\putbib[ref]
\end{bibunit}

\end{document}